\documentclass[11pt]{amsart}
\usepackage{amsmath,amssymb,mathtools}
\usepackage[margin=1in]{geometry}
\usepackage{needspace}
\usepackage{microtype}
\usepackage{xcolor}
\usepackage[colorlinks=true,linkcolor=blue!45!black,citecolor=blue!45!black,urlcolor=blue!45!black]{hyperref}
\DeclareSymbolFont{averagedintegrals}{U}{esint}{m}{n}
\DeclareMathSymbol{\fintop}{\mathop}{averagedintegrals}{'037}
\newcommand{\fint}{\fintop\nolimits}
\newcommand{\R}{\mathbb{R}}
\newcommand{\cR}{\mathcal{R}}
\newcommand{\cS}{\mathcal{S}}

\newcommand{\Id}{\mathrm{Id}}
\newcommand{\Test}{\mathrm{Test}}
\newcommand{\pmGH}{\mathrm{pmGH}}

\newcommand{\Leb}{\mathcal{L}}
\newcommand{\Ch}{\mathrm{Ch}}
\newcommand{\RCD}{\mathrm{RCD}}

\newcommand{\supp}{\operatorname{supp}}
\newcommand{\Lip}{\operatorname{Lip}}
\newcommand{\essinf}{\operatorname*{ess\,inf}}
\DeclareMathOperator{\Ric}{Ric}
\DeclareMathOperator{\vol}{vol}
\DeclareMathOperator{\Hess}{Hess}
\DeclareMathOperator{\tr}{tr}
\DeclareMathOperator{\diver}{div}

\theoremstyle{plain}
\newtheorem{thm}{Theorem}[section]
\newtheorem{prop}[thm]{Proposition}
\newtheorem{lem}[thm]{Lemma}

\theoremstyle{definition}

\theoremstyle{remark}
\newtheorem{rem}[thm]{Remark}
\numberwithin{equation}{section}

\title{The regular set of Ricci limit spaces}
\author{Tianyin Ren}
\address{Department of Mathematics, Huaqiao University}
\email{tianyin.ren@hqu.edu.cn}
\date{}
\begin{document}
\begin{abstract}
	Let $(X,d,p)$ be a nontrivial pointed Gromov--Hausdorff limit of complete connected Riemannian $n$-manifolds satisfying $\Ric\geq-(n-1)g$. For $1\leq j\leq n$, set
	\[ \cR_j(X) = \bigl\{x\in X:\text{ every tangent cone at } x \text{ is isometric to } \R^j\bigr\}, \qquad \cR(X)=\bigsqcup_{j=1}^n\cR_j(X). \]
	We prove that there is a unique $k$ such that $\cR(X)=\cR_k(X)$, answering the pointwise regular-dimension question originating in Cheeger and Colding's work~\cite[p408]{CC97}. We construct a volume-corrected heat entropy on RCD space and identify the distributional logarithmic-density defect $\tr(\Hess f)-\Delta f$ by Green-volume regularization.
\end{abstract}
\maketitle

\section{Introduction}

Let $(M_i^n,g_i,p_i)$ be a sequence of pointed complete connected $n$-dimensional Riemannian manifolds such that
\[ \Ric_{g_i}\geq-(n-1)g_i, \qquad (M_i,d_{g_i},p_i)\xrightarrow{\mathrm{pGH}}(X,d,p). \]
The limit $(X,d,p)$ is called a \emph{pointed Ricci limit space}.

For $x\in X$ and a decreasing sequence $r_i\to0^+$, any pointed Gromov--Hausdorff subsequential limit
\[ (X,r_i^{-1}d,x)\xrightarrow{\mathrm{pGH}}(T_{x}X,d_\infty,z_x) \]
is called a \emph{tangent cone} at $x$. The tangent cone may depend on the sequence of scales. A point $x$ is \emph{strongly $j$-regular} if every tangent cone at $x$ is isometric to $(\R^j,0)$. We write
\[ \cR_j(X) = \bigl\{x\in X:\text{ every tangent cone at } x \text{ is isometric to } \R^j\bigr\}, \qquad \cR(X)=\bigsqcup_{j=1}^n\cR_j(X). \]
We call $\cR(X)$ the regular set. Throughout the paper, regularity is understood in this strong sense; the existence of a single Euclidean tangent cone is not sufficient.

Let $l =\dim_{\mathcal H}X$ be the Hausdorff dimension of $X$ and $\cS(X)=X\setminus\cR(X)$. Cheeger and Colding~\cite[p408]{CC97} expressed the hope that $\cR(X)$ is connected, $\cR(X)=\cR_l(X)$, and $\dim_{\mathcal H}\cS(X)\leq l -1$, and more generally that $\cS(X)$ has codimension at least one with respect to any natural measure assigning positive measure to $\cR(X)$. In the noncollapsed case, $l = n$, these properties hold, with the stronger bound $\dim_{\mathcal H}\cS(X)\leq n-2$.

Cheeger and Colding~\cite{CC97,CC00a,CC00b}, established the rectifiable structure of Ricci limit spaces. For any renormalized limit measure $m$, the regular set has full $m$-measure. Moreover, each stratum $\cR_j(X)$ can, up to a $m$-null set, be covered by countably many Borel sets bi-Lipschitz to subsets of $\R^j$, with $m \ll\mathcal H^j$ on these sets~\cite{CC00b}.

Colding and Naber~\cite{CN12} proved that there is a unique integer $k$ such that $\cR_k(X)$ has full $m$-measure. Combined with Cheeger--Colding's rectifiability theorem, this implies that $(X,d,m)$ is countably $k$-rectifiable and $m \ll\mathcal H^k$. This $k$ is the essential (or rectifiable) dimension of $X$. Colding and Naber also proved that $\cR_k(X)$ is weakly convex~\cite{CN12}. Consequently, $\cR_k(X)$ is connected and dense in $X$, and $\cR(X)$ is connected as well, since $\cR_k(X)\subseteq\cR(X)\subseteq\overline{\cR_k(X)}=X$.

Kapovitch and Li~\cite{KL18} showed that $k$ is the largest dimension of a nonempty regular stratum. It remains to exclude $\cR_j(X)$ for $j<k$.

Pan and Wei~\cite{PW22} constructed collapsing Ricci limit spaces with $\cR(X)=\cR_2(X)$ and $\dim_{\mathcal H}\cS(X)>2$, whose Hausdorff dimension can be nonintegral. 
Thus the regular dimension need not agree with the Hausdorff dimension of the space. The question whether all regular points have the same dimension nevertheless remained open. These results do not exclude regular points of a different dimension on a set of measure zero. 

Several low-dimensional cases are known. Chen proved that $\cR_1(X)\neq\varnothing$ forces $X$ to be a one-dimensional topological manifold~\cite{Chen16}.  In~\cite{Chen24}, a $2$-regular point in the interior of a geodesic implies $2$-rectifiability. Our main result establishes pointwise constancy in arbitrary regular dimension, as follows.

\begin{thm}\label{thm:ricci-regular-dimension}
	Let $(X,d,p)$ be a nontrivial pointed Ricci limit space of complete connected $n$-dimensional Riemannian manifolds with $\Ric\geq-(n-1)g$. Then there is a unique integer $k\in\{1,\ldots,n\}$ such that
	\begin{equation}
		\varnothing\neq\cR(X)=\cR_k(X).
	\end{equation}
\end{thm}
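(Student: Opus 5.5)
We reduce the theorem to excluding strongly $j$-regular points with $j<k$, where $k$ is the Colding--Naber essential dimension. By Colding--Naber~\cite{CN12} there is a unique $k$ with $m(X\setminus\cR_k(X))=0$ for a renormalized limit measure $m$. In particular $\cR_k(X)\neq\varnothing$, and by Kapovitch--Li~\cite{KL18} $\cR_j(X)=\varnothing$ for $j>k$. Uniqueness of $k$ in the statement is then automatic, since the strata are disjoint and $\cR_k(X)\neq\varnothing$. So the whole content is: if $x\in\cR_j(X)$, then $j\geq k$. We will work in the $\RCD(-(n-1),n)$ framework for $(X,d,m)$, which is available by stability of curvature-dimension bounds, so that heat flow, the Laplacian and the distributional Hessian calculus are at hand.

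The strategy is a monotone, scale-invariant quantity attached to $x$ whose small-scale limit detects the dimension of the tangent cone. The abstract suggests a volume-corrected heat entropy. For the heat kernel $H_t(x,\cdot)$, set
\[
W_x(t)=\int H_t(x,y)\log\bigl(H_t(x,y)\,m(B_{\sqrt t}(x))\bigr)\,dm(y),
\]
an entropy normalized by the volume of the ball at the heat scale. The key steps are as follows.

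First, compute $\frac{d}{dt}W_x(t)$ via the Bochner inequality. Writing $H_t=e^{-f}/m(B_{\sqrt t}(x))$, the derivative splits into a Perelman-type nonnegative term
\[
\int\bigl|\Hess f-\tfrac{1}{2t}\,g\bigr|^2 H_t\,dm,
\]
plus a correction. The correction involves the defect $\tr(\Hess f)-\Delta f$, where $\tr$ is taken with respect to the $k$-dimensional tangent module, together with the logarithmic derivative of $m(B_{\sqrt t}(x))$. Here one uses that the tangent module has constant dimension $k$ $m$-a.e. (again from Colding--Naber / Gigli--Pasqualetto), so $\tr$ always sees $k$ directions.

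Second, identify $\Delta f-\tr\Hess f$ distributionally. The approach is to regularize with Green-function volumes, showing it equals a nonnegative measure-valued term measuring the non-Euclidean growth of the density $dm/d\mathcal H^k$, plus an error controlled by the volume ratio. This is the main obstacle: in collapsed spaces $m$ is not $\mathcal H^k$, and $\Delta$ contains the drift $\langle\nabla\log\rho,\nabla\cdot\rangle$. One needs that this drift contributes with a sign after integration against $H_t$, or is integrable in $dt/t$.

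Third, obtain almost-monotonicity, $W_x(t)+C t$ nondecreasing, so $\lim_{t\to0}W_x(t)$ exists. Along any blow-up $r_i\to0$, the rescaled heat kernels converge (by pmGH stability of heat kernels) to the heat kernel of the tangent cone with its limit measure. If the cone is $\R^j$, the limit measure is then $c\,\Leb^j$ by the splitting theorem, and the limit entropy is a computable function of $j$ alone, namely $-\tfrac j2\log(4\pi e)$ plus a normalization constant. The Hessian term forces the rescaled $f$ to be $|y|^2/4+$const. Comparing with the trace identity, which involves $k$ since $\tr\Hess(|y|^2/4)=k/2$ on the $k$-dimensional module of approximating regular points, against $\Delta f=j/2$ on $\R^j$, forces $j=k$, provided the defect term vanishes in the limit. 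It does vanish, because the integrated defect is monotone and bounded, so its contribution over $[r_i^2,2r_i^2]$ tends to zero.

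Finally, the case $j>k$ is already excluded by~\cite{KL18}, and $j<k$ is excluded by the above. The hardest step I expect is rigorously passing the trace term $\tr(\Hess f)$ to the limit under collapsing, since the dimension of the tangent module is not continuous under pmGH convergence. I would handle it by testing against $H_t$ and using only the integrated Green-regularized identity, which is stable.
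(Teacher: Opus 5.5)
You reduce correctly to excluding $\cR_j(X)$ for $j<k$, exactly as the paper does. But the exclusion argument has a genuine gap: you look for a contradiction purely at the point $x\in\cR_j(X)$, and none exists there.

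Near $x$, $u=-\log k_t^x$ is modeled on $|h|^2/(4t)$ for a harmonic almost-splitting map $h$ to $\R^j$. So its Hessian on the rank-$k$ module is roughly $\frac1{2t}\sum_a dh^a\otimes dh^a$. That tensor has rank $j$ and trace $\approx j/(2t)\approx\Delta u$; it is not $g/(2t)$ with trace $k/(2t)$. Consistently, the paper proves (Proposition~\ref{prop:trace-defect}, from the sharp bound $t^2\int|\Hess u|^2k\geq j/4$ and $E_x(t)\sim\frac j2\log(1/t)$) that the trace defect is sublogarithmic at $x$. It does vanish, and this yields nothing. What does not vanish is your Perelman term: with the rank-$k$ metric, $t^2\int|\Hess u-g/(2t)|^2k\approx(k-j)/4$ in logarithmic average. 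That is not integrable in $t\,dt$, so the $k$-normalized entropy just grows like $\frac{k-j}{2}\log(1/t)$ at $x$, as in \eqref{eq:exceptional-entropy-growth}, and has no finite limit. With your $\log m(B_{\sqrt t}(x))$ normalization, $W_x$ is $O(1)$ by the Gaussian bounds and its blow-up limits are the $\R^j$ values, again consistent. Also, $\Delta f-\tr\Hess f$ has no sign: by Theorem~\ref{thm:log-density-identity} it equals $-\langle\nabla\Phi,\nabla f\rangle$ distributionally, so a nonnegativity argument is unavailable.

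The missing idea is a nonlocal comparison with nearby ordinary $k$-regular points, where the $k$-normalized entropy stays bounded. This needs an almost-monotonicity whose constants are uniform in the center; the ordinary-point asymptotics $o_y(\log(1/t))$ are not uniform. The obstruction is the signless drift term $-2\int D_t^yk_t^y$. The paper proceeds as follows:
\begin{itemize}
\item It cancels the drift exactly with the correction $2P_t(\chi\Phi)(y)$.
\item It gets $\mathcal F_y(t)\leq B_0+\Phi(y)+O(t)$ at almost every ordinary $y$.
\item It shows $C_x(t)=o(\log(1/t))$ from the defect estimate.
\item It picks $y_t$ with $d(x,y_t)<t^{M_0}$ and $\Phi(y_t)\leq o(\log(1/t))$.
\item It transfers the bound to $x$ by Lemma~\ref{lem:nearby-center-comparison}.
\end{itemize}
Without such a comparison, blow-up analysis at $x$ alone cannot see $k$, since, as you note, the module rank is lost in the limit.
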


We describe how entropy enters the proof of Theorem~\ref{thm:ricci-regular-dimension}. We work on a complete connected $\RCD(-K,N)$ space. Throughout the paper, $k$ denotes the essential dimension of $X$, while $n$ denotes the dimension of the approximating manifolds. For $k\geq2$, we may increase the finite synthetic upper dimension parameter so that $N>k$.

Let $k_t^y$ denote the positive heat-kernel function of the reference-measure heat semigroup $P_t$ with pole $y$, so that $P_tf(y)=\int_X f k_t^y\,dm$ and $k_t^y(\xi)=k_t^\xi(y)$. Put $u_t^y=-\log k_t^y$. Define the entropy and Fisher information by
\[ E_y(t)=\int_X k_t^y\log k_t^y\,dm, \qquad I_y(t)=\int_X|\nabla u_t^y|^2k_t^y\,dm. \]
The entropy $-E_y$ was used by Nash~\cite{Nash58}. Its normalization by the essential dimension is
\[ \mathcal F_y^0(t)=-E_y(t)-\frac k2\log t. \]
If $z\in\cR_j(X)$ with $j<k$, the volume ratios and Gaussian bounds give $E_z(t)=\frac j2\log(1/t)+o(\log(1/t))$. At each fixed ordinary $k$-regular point $y$, we have
\[ \mathcal F_y^0(t)=o_y(\log(1/t)) \qquad(t\to0^+). \]
The remainder $o_y$ may depend on $y$: this is a pointwise asymptotic with no uniform control near $z$. Our comparison requires ordinary centers $y_t\to z$ chosen with $t$, so substituting $y=y_t$ into this estimate is not justified.

Ni~\cite{Ni04}, motivated by Perelman's entropy for the Ricci flow~\cite{Per02}, introduced the corresponding $W$-entropy for the heat equation on a fixed Riemannian manifold. We use the normalization
\[ \mathcal W_y(t)=tI_y(t)+\mathcal F_y^0(t)=tI_y(t)-E_y(t)-\frac k2\log t. \]
In the unweighted smooth case, $\mathcal W_y-\frac k2\log(4\pi)-k$ is Ni's entropy. On smooth weighted manifolds, Li~\cite{Li12} and Li--Li~\cite{LL15} established corresponding $W$-entropy formulas for the Witten Laplacian.

On $\RCD(0,N)$ spaces, Kuwada--Li~\cite{KLi21} proved monotonicity of the $W$-entropy normalized by the synthetic dimension $N$. Li--Zhang~\cite{LZ25} also study rank-dependent entropy formulas, under their stated Riemannian Bochner and heat-flow integrability hypotheses. The estimates below retain the trace defect and justify the heat-kernel approximation needed for the normalization by $k$ and the local density correction.

To control this dependence on $y$, we introduce a volume-adjusted entropy $\mathcal F$ using the intrinsic density. Define, almost everywhere,
\[ \theta_k(x)=\lim_{r\to0^+}\frac{m(B_r(x))}{\omega_k r^k}, \qquad \Phi(x)=-\log\theta_k(x). \]
Here $\omega_k$ is the volume of the Euclidean unit ball. The density is positive and finite on a full-measure set~\cite{BS18}.

Choose a compactly supported cutoff $\chi$ equal to one near $z$, put $\Psi=\chi\Phi$, and set $C_y(t)=-P_t\Psi(y)$. The volume-corrected entropy is
\[ \mathcal F_y(t)=\mathcal W_y(t)-2C_y(t)=tI_y(t)-E_y(t)-\frac k2\log t-2C_y(t). \]
The following result supplies the local integrability and distributional identity needed to justify and estimate this correction.

\begin{thm}[Logarithmic density]\label{thm:log-density-identity}
	Let $(X,d,m)$ be a complete connected $\RCD(-K,N)$ space with full support and measure finite on bounded sets, where $K\geq0$ and $1<N<\infty$. Suppose its essential dimension is $k\geq1$. Then:
	\begin{enumerate}
		\item[(i)] The logarithmic density $\Phi=-\log\theta_k$ belongs to $L^2_{\mathrm{loc}}(m)$.
		\item[(ii)] For every global test function $f$ and every compactly supported Lipschitz function $\phi$,
		\begin{equation}\label{eq:log-density-identity}
			-\int_X\Phi\,[\phi\Delta f+\langle\nabla \phi,\nabla f\rangle]\,dm = \int_X\phi\,[\tr(\Hess f)-\Delta f]\,dm.
		\end{equation}
	\end{enumerate}
\end{thm}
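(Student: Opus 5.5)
The identity \eqref{eq:log-density-identity} is the weak form of a first-order relation. On the regular set write $m=\theta_k\,\mathcal H^k$ and use the heuristic $\Delta_{\mathcal H^k}f=\tr(\Hess f)$. Then
\[ \Delta f=\tr(\Hess f)+\langle\nabla\log\theta_k,\nabla f\rangle=\tr(\Hess f)-\langle\nabla\Phi,\nabla f\rangle, \]
and integrating against $\phi$ while moving the derivative off $\Phi$ gives exactly \eqref{eq:log-density-identity}. Since $\Phi$ is not known to be Sobolev, I would never differentiate $\Phi$ itself. Instead I would build regularizations $\Phi_r$ for which the analogous identity holds up to an explicit error, prove a uniform $L^2_{\mathrm{loc}}$ bound for $\Phi_r$, and pass to the limit $r\to0$.

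\emph{Step 1: Green-volume regularization.} For $r>0$ set $\theta_r(x)=\int_X \rho_r(d(x,\cdot))\,dm/(c_k r^k)$, a smoothed volume ratio with a radial profile $\rho_r$ of Green type, and $\Phi_r=-\log\theta_r$. By Bishop--Gromov, $\theta_r$ is locally Lipschitz and locally bounded above and below at each fixed $r$, so $\Phi_r\in W^{1,2}_{\mathrm{loc}}$. By the Lebesgue-type density theorem of~\cite{BS18}, $\Phi_r\to\Phi$ $m$-a.e. Next I would compute $\langle\nabla\Phi_r,\nabla f\rangle$ using the first variation of the smoothed volume along the gradient flow of $f$. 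This should give
\[ \langle\nabla\Phi_r,\nabla f\rangle=-(\Delta f-\tr\Hess f)+R_r(f), \]
where $R_r(f)$ collects averages of $\Hess f$ and $\Delta f$ over the scale-$r$ kernel, compared with their pointwise values. Here I would use that on a.e. point the tangent cone is $\R^k$, so that the trace over the $k$-dimensional tangent module (Gigli's $\tr\Hess$, which by Han/BGHZ coincides with $\Delta$ exactly when $k=N$) appears as the limit of the kernel averages. In the limit, $\int\phi R_r(f)\,dm\to0$ for test $f$ by dominated convergence, using $|\Hess f|\in L^2$ and $\Delta f\in L^\infty$.

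\emph{Step 2: the $L^2_{\mathrm{loc}}$ bound (i).} This is the main obstacle. The upper tail of $\Phi_r$ is harmless, since $\theta_r$ is locally bounded above by doubling. The lower tail, where $\theta_r$ is small, needs a quantitative estimate of the form $m(\{\theta_r<\varepsilon\}\cap B)\le C\varepsilon^{\alpha}$ uniformly in $r$. I would first try to obtain it from the Step~1 identity itself. Test it with $\phi=\chi\Phi_r$ and $f$ a good cutoff with $\Delta f$ bounded. Combined with the local Poincar\'e inequality, this controls $\int\chi|\nabla\Phi_r|^2$ by $\int\chi|\Phi_r|(|\Delta f|+|\Hess f|)$. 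Absorbing via Cauchy--Schwarz and Poincar\'e then gives $\sup_r\|\Phi_r\|_{L^2(B)}<\infty$, provided the mean of $\Phi_r$ on a fixed ball stays bounded. The mean is bounded because $\theta_r$ is bounded above and, on a set of positive measure, uniformly bounded below. Fatou's lemma then yields $\Phi\in L^2_{\mathrm{loc}}$.

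If this absorption fails, the fallback is a covering argument. Combine a Vitali covering of $\{\theta<\varepsilon\}$ with the monotonicity of $s\mapsto m(B_s(x))/v_{-K,N}(s)$ to get a power decay of $m(\{\theta<\varepsilon\}\cap B)$.

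\emph{Step 3: passage to the limit (ii).} Write the left side of \eqref{eq:log-density-identity} for $\Phi_r$ and integrate by parts. Since $\Phi_r\in W^{1,2}_{\mathrm{loc}}$ this is legitimate, and it gives $\int\phi\langle\nabla\Phi_r,\nabla f\rangle$; Step 1 turns this into the right side plus $\int\phi R_r(f)$. The uniform $L^2$ bound makes $\Phi_r$ uniformly integrable. Together with a.e. convergence and $\phi\Delta f+\langle\nabla\phi,\nabla f\rangle\in L^2$ with compact support, this lets the left side converge. The error term vanishes as shown in Step~1, which proves (ii).
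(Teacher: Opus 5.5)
Your heuristic and overall architecture agree with the paper: regularize $\theta_k$ at scale $r$, compute $\langle\nabla\Phi_r,\nabla f\rangle$ by a first-variation formula, prove scale-uniform $L^2_{\mathrm{loc}}$ bounds, and pass to the limit. However, Step~2 has a genuine gap, and Step~1 lacks the mechanism that makes the plan work.

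\textbf{Step 2.} Every identity available to you involves $\Phi_r$ only through $\langle\nabla\Phi_r,\nabla f\rangle$, for test functions $f$ whose second-order norms enter the bounds. Testing with $\phi=\chi\Phi_r$ gives $\int\chi\Phi_r\langle\nabla\Phi_r,\nabla f\rangle\,dm=\frac12\int\chi\langle\nabla(\Phi_r^2),\nabla f\rangle\,dm$. It never produces $\int\chi|\nabla\Phi_r|^2\,dm$, and $f=\Phi_r$ is not admissible. So no energy bound for $\Phi_r$ comes out; the paper's result is purely directional and never uses a Sobolev gradient of $\Phi$. There is nothing for Poincar\'e to absorb.

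The side claims also fail:
\begin{itemize}
\item Doubling and Bishop--Gromov bound $m(B_r(x))$ from below with exponent $N>k$. They give no bound on $m(B_r(x))/r^k$ uniform in $r$. Even a uniform upper bound on $\theta_r$ would only bound $\Phi_r=-\log\theta_r$ from below.
\item Knowing $\Phi_r$ is bounded on a set of positive measure does not bound its mean without the integrability you are trying to prove.
\item The covering fallback fails. The bound $m(B_s(x))\geq c(s/r)^Nm(B_r(x))$ is compatible with $\theta_k(x)=0$. A Vitali cover of $\{\theta<\varepsilon\}$ by balls with $m(B_{r_i}(x_i))\leq\varepsilon r_i^k$ only bounds its measure by $C\varepsilon\sum_i r_i^k$, an uncontrolled $k$-dimensional content.
\end{itemize}

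The missing idea is duality. The paper first proves the scale-uniform directional estimate $\|\langle\nabla\Phi_\epsilon,\nabla f\rangle\|_{L^2(A)}\leq C(\|\Hess f\|_2+\|\Delta f\|_2+\|\nabla f\|_2+\|f\|_2)$. For $f$ supported in a fixed ball, Poincar\'e and the integrated Hessian bound turn this into $|\int F(\Phi_\epsilon)\Delta f\,dm|\leq C\|\Delta f\|_2$ for every $F$ with $|F'|\leq1$. Riesz representation then splits $F_+(\pm\Phi_\epsilon)=w_\epsilon+h_\epsilon$, with $\|w_\epsilon\|_2\leq C$ and $h_\epsilon$ harmonic.
\begin{itemize}
\item Positivity of $F_+$ and a subsolution estimate bound $h_\epsilon$ from below.
\item Convergence in measure bounds $h_\epsilon$ on a set of positive measure.
\item Harnack then bounds $h_\epsilon$ above on a smaller ball.
\end{itemize}
Positive-measure anchoring works only for the harmonic part, through Harnack.

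\textbf{Step 1.} This is where the directional estimate must come from, and a distance-profile kernel does not supply it. Integrating by parts in $y$ shows that, up to the normalization $(c_kr^k)^{-1}$, $\langle\nabla\theta_r,\nabla f\rangle(x)$ equals $\int\rho_r(d(x,\cdot))\Delta f\,dm$ plus the integral of $\rho_r'(d(x,y))$ against the two-endpoint term $\langle\nabla f(x),\nabla_xd(x,y)\rangle+\langle\nabla f(y),\nabla_yd(x,y)\rangle$.
\begin{itemize}
\item To dominate $R_r(f)$ uniformly in $r$, you need this term bounded by $d(x,y)(g(x)+g(y))$ with $\|g\|_2\lesssim\|\Hess f\|_2+\|\Delta f\|_2$.
\item To identify the a.e.\ limit, you need its exact second-order expansion.
\end{itemize}
Both require second-order calculus along individual geodesics, which is not available in this pointwise, maximal-function form on $\RCD$ spaces. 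So your claim that $R_r(f)$ is dominated and tends to $0$ a.e.\ is unsupported.

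This is why the paper, following Bru\`e--Semola, uses superlevel volumes of a killed Green kernel on $X\times\R^3$; the product makes the kernel singular even when $k\leq2$. For this kernel the two-endpoint term is rewritten exactly, by integration by parts, as a pairing of $2\Hess^Yv-(\Delta^Yv)g_Y$ with $\nabla^YG_a$ and $\nabla^YG_b$. That identity gives the uniform bound via Green distortion and a Green maximal function. It also gives the a.e.\ limit $\tr\Hess f-\Delta f$, via blow-up to the Euclidean tangent cone and rotational invariance. Your Step~3 is fine once these ingredients are in place.
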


\begin{rem}
	Identity~\eqref{eq:log-density-identity} is directional and distributional; a full Sobolev gradient for $\Phi$ is not assumed. On a smooth weighted manifold with $m=e^{-\Phi}\vol_g$, it is the familiar relation $\tr_g\Hess f-\Delta f=\langle\nabla\Phi,\nabla f\rangle$. The theorem identifies the intrinsic density defect without an open smooth locus. A subsequent paper will use Theorem~\ref{thm:log-density-identity} to give another proof of Honda's conjectures~\cite{Hon20}, recently resolved in~\cite{BG26}.
\end{rem}

Let $D_t^y=\tr(\Hess u_t^y)-\Delta u_t^y$. The localized form of~\eqref{eq:log-density-identity} gives
\[ C_y'(t)=-\int_X D_t^y k_t^y\,dm+\operatorname{err}_y(t), \]
where the time-integrated cutoff error is exponentially small, uniformly for centers near $z$. The entropy inequality for $\mathcal W_y$ contains the linear term $-2\int_X D_t^y k_t^y\,dm$. Differentiating $-2C_y$ cancels this term and yields a time-propagation upper bound. The resulting estimate has constants uniform in the nearby center and an explicit dependence on $\Phi(y)$, replacing the unspecified point-dependent remainder $o_y(\log(1/t))$.

At the proposed exceptional point $z$, harmonic almost splitting maps give the sharp lower bound $j/4$ for the $t^2$-weighted Hessian energy. This accounts for the entire leading logarithmic coefficient in the integrated entropy dissipation, leaving only a sublogarithmic time integral of the trace defect. Consequently $C_z(t)=o(\log(1/t))$, so the correction preserves the dimensional growth:
\[ \mathcal F_z(t)=\frac{k-j}{2}\log(1/t)+o(\log(1/t)). \]
At almost every ordinary $k$-regular point $y$, the corrected inequality and the density limit instead give $\mathcal F_y(t)\leq B_0+\Phi(y)+O(t)$, with constants uniform near $z$. The sublogarithmic heat average at $z$, Gaussian bounds, and $\Phi\in L^2_{\mathrm{loc}}$ allow us to choose ordinary points $y_t$ with $d(z,y_t)<t^{M_0}$, for a fixed $M_0>(N+1)/2$, and $\Phi(y_t)\leq o(\log(1/t))$. Heat-kernel comparison between $z$ and $y_t$ then yields
\[ \mathcal F_z(t)\leq o(\log(1/t)), \]
contradicting the positive coefficient when $j<k$. This excludes strongly regular points below the essential dimension and, together with the result of Kapovitch--Li, proves Theorem~\ref{thm:ricci-regular-dimension}.

The adjusted entropy also has a useful interpretation and further applications on smooth weighted manifolds. Let $(M^k,g,m)$ satisfy $dm=e^{-\Phi}\,d\vol_g$, with $\Phi$ smooth, so that $\theta_k=e^{-\Phi}$ and $\Delta f=\Delta_g f-\langle\nabla\Phi,\nabla f\rangle$. Suppose first that $M$ is closed and choose $\chi\equiv1$. Then the unlocalized entropy is
\[ \mathcal F_y(t)=tI_y(t)-E_y(t)-\frac k2\log t+2P_t\Phi(y). \]
Let $\Ric_\Phi=\Ric+\Hess\Phi$. The weighted Bochner identity and $(P_t\Phi)'(y)=\int_M D_t^y k_t^y\,dm$ give the exact dissipation formula
\[ \frac{d}{dt}\mathcal F_y(t)=-2t\int_M\left[\left|\Hess u_t^y-\frac{g}{2t}\right|^2+\Ric_\Phi(\nabla u_t^y,\nabla u_t^y)\right]k_t^y\,dm. \]
The same identities hold on complete manifolds when the heat flow is conservative, the terms are integrable, and differentiation and integration by parts are justified. In the smooth setting, this formula follows by density and dimension renormalization from the Witten-Laplacian entropy formulas in~\cite{Li12,LL15}. The formulation above keeps the geometric dimension $k$ and the heat average of the logarithmic density explicit. Our RCD argument makes this correction available for an intrinsically defined density having only the distributional directional regularity supplied by Theorem~\ref{thm:log-density-identity}.

Several consequences follow directly in this smooth setting. If $\Ric_\Phi\geq0$, then $\mathcal F_y$ is nonincreasing. With $c_k=k+\frac k2\log(4\pi)$, the small-time entropy and Fisher asymptotics, which hold in particular on closed manifolds, give
\[ \lim_{t\to0^+}\bigl(\mathcal F_y(t)-c_k\bigr)=\Phi(y), \qquad \mathcal F_y(t)\leq c_k+\Phi(y). \]
Thus the adjusted entropy recovers the local measure density from the heat flow and gives a heat-kernel entropy bound in the geometric dimension. Moreover, if $\Ric_\Phi^N=\Ric_\Phi-\nabla\Phi\otimes\nabla\Phi/(N-k)\geq0$ for some $N>k$, then for $0<a<b$ the same dissipation formula gives
\[ \int_a^b t\int_M\left[\left|\Hess u_t^y-\frac{g}{2t}\right|^2+\frac{|D_t^y|^2}{N-k}\right]k_t^y\,dm\,dt\leq\frac{\mathcal F_y(a)-\mathcal F_y(b)}2. \]
Small entropy loss therefore controls both the Gaussian Hessian defect and the trace defect. This supplies a quantitative analytic tool for weighted rigidity and stability questions; extending such control to geometric cone rigidity under collapsing limits is a further problem.

We now sketch the proof of Theorem~\ref{thm:log-density-identity}. Work on $Y=X\times\R^3$, with product measure $m_Y=m\times\Leb^3$ and essential dimension $\ell=k+3$, and choose a killed Green kernel $G$. The product ensures $\ell>2$, so the Green kernel has a singularity; the exponential killing factor makes its large-time integral finite. At almost every pole, Euclidean blow-up and the unit Dirac normalization determine the exact coefficient of this singularity. For a smooth nondecreasing cutoff $H$ that vanishes on $[0,1]$ and equals one on $[2,\infty)$, define the regularized superlevel volume $V_\epsilon(a)=\int_Y H(\epsilon G(a,b))\,dm_Y(b)$. The Green asymptotic gives
\[ V_\epsilon((x,0))=A_{H,\ell}\epsilon^{\ell/(\ell-2)}\theta_k(x)^{-2/(\ell-2)}(1+o(1)), \]
where $A_{H,\ell}>0$ is an explicit Euclidean constant. Thus the normalized logarithms
\[ \Phi_\epsilon(x)=\frac{\ell-2}{2}\log\left(\frac{V_\epsilon((x,0))}{A_{H,\ell}\epsilon^{\ell/(\ell-2)}}\right) \]
are locally Lipschitz on each relatively compact region for sufficiently small $\epsilon$, and converge to $\Phi$ almost everywhere and locally in measure.

For part~(i), first-order Green estimates and local maximal-function bounds give uniform local $L^2$ control of $\langle\nabla\Phi_\epsilon,\nabla f\rangle$ for each compactly supported test function $f$, in terms of its Hessian, Laplacian, gradient, and function norms. To turn this directional estimate into a bound for $\Phi_\epsilon$ itself, we apply local transposition separately to smooth positive and negative parts. Riesz representation separates a uniformly bounded $L^2$ term from a harmonic remainder. Convergence in measure anchors that remainder on a set of positive measure, and local harmonic estimates and Harnack's inequality bound it on a smaller ball. This yields uniform local $L^2$ bounds for $\Phi_\epsilon$, hence $\Phi\in L^2_{\mathrm{loc}}$ and $\Phi_\epsilon\rightharpoonup\Phi$ locally in $L^2$.

For part~(ii), passing to the limit in integration by parts first defines unique local $L^2$ directional representatives $b_f$ satisfying
\[ \int_X\phi b_f\,dm=-\int_X\Phi\bigl[\phi\Delta f+\langle\nabla\phi,\nabla f\rangle\bigr]\,dm \]
for compactly supported Lipschitz $\phi$ in the region under consideration.

We then identify $b_f$ by an exact formula at each regularization scale. A two-endpoint integration-by-parts identity for truncated Green kernels expresses the derivative of $V_\epsilon$ through the tensor $2\Hess^Y v-(\Delta^Y v)g_Y$, where $v$ is a compactly supported extension of $f$ to $Y$. The second potential in this formula is the resolvent of the shell forcing $\epsilon H'(\epsilon G)$. At almost every pole, annular estimates control the singular and outer tails, while rescaling to the Euclidean tangent cone freezes the tensor coefficients and gives strong Sobolev convergence of the potentials on fixed annuli. Rotational invariance then evaluates the normalized stress, and the killing term vanishes. Combining this limit with the normalized superlevel average of $\Delta^Y v$ yields $\tr(\Hess f)-\Delta f$. The uniform directional $L^2$ estimate identifies this limit with the representative just constructed:
\[ b_f=\tr(\Hess f)-\Delta f \qquad m\text{-almost everywhere}. \]
Substitution into its distributional representation proves~\eqref{eq:log-density-identity}; a final cutoff argument extends the identity from compactly supported test functions to all global test functions. These two parts are proved in the final two sections.

The paper is organized as follows. Section~\ref{sec:heat-quantities} establishes the heat identities and trace-defect control, including the facts about measured tangent cones needed at strongly regular points. Section~\ref{sec:entropy-exclusion} develops the corrected entropy estimates and proves Theorem~\ref{thm:ricci-regular-dimension} using Theorem~\ref{thm:log-density-identity}. The proof of Theorem~\ref{thm:log-density-identity} occupies the final two sections: Section~\ref{sec:green-asymptotics} proves part~(i) by Green-volume regularization and local transposition, and Section~\ref{sec:stress-limit} proves part~(ii) through the Green stress limit.

\section{Heat quantities and the trace defect}\label{sec:heat-quantities}
For the heat estimates in this section, $(X,d,m)$ is a complete connected $\RCD(-K,N)$ space with full support, measure finite on bounded sets, and essential dimension $k$, where $K\geq0$ and $2\leq k<N<\infty$. Lemma~\ref{lem:canonical-measured-tangent} below is stated in the general range $1<N<\infty$ and does not require $k\geq2$.

\subsection{Notation and Gaussian bounds}
We recall the differential operators used below, following~\cite{Che99,Gig18}. The Sobolev space $W^{1,2}(X)$ consists of $L^2(m)$ functions with minimal $2$-weak upper gradient $|Df|\in L^2(m)$. The $\RCD$ condition makes the Cheeger energy $\Ch(f)=\frac12\int_X|Df|^2\,dm$ quadratic. The differential $df$ belongs to the cotangent module $L^2(T^*X)$, and its metric dual is the gradient $\nabla f\in L^2(TX)$, with $|\nabla f|=|df|=|Df|$ almost everywhere. The pointwise inner product on the tangent module is denoted by $\langle\cdot,\cdot\rangle$.

The reference-measure Laplacian is defined by integration by parts: $f\in D(\Delta)$ if $f\in W^{1,2}(X)$ and there is $h\in L^2(m)$ such that
\[ \int_X\varphi h\,dm=-\int_X\langle\nabla\varphi,\nabla f\rangle\,dm \qquad\text{for every }\varphi\in W^{1,2}(X). \]
We then set $\Delta f=h$. Thus $\Delta=\diver_m\nabla$ is nonpositive on $L^2(m)$, and the heat semigroup satisfies $\partial_tP_t=\Delta P_t$. We use the test class
\[ \Test(X)=\{f\in D(\Delta)\cap L^\infty: |\nabla f|\in L^\infty,\ \Delta f\in W^{1,2}\cap L^\infty\}. \]

The Hessian is a weak covariant $2$-tensor. More precisely, $f\in W^{2,2}(X)$ if $f\in W^{1,2}(X)$ admits a tensor $\Hess f\in L^2((T^*X)^{\otimes2})$ satisfying
\begin{align*}
2\int_X h\,\Hess f(\nabla g_1,\nabla g_2)\,dm
&=-\int_X\langle\nabla f,\nabla g_1\rangle\,\diver_m(h\nabla g_2)\,dm\\
&\quad-\int_X\langle\nabla f,\nabla g_2\rangle\,\diver_m(h\nabla g_1)\,dm\\
&\quad-\int_X h\,\langle\nabla f,\nabla\langle\nabla g_1,\nabla g_2\rangle\rangle\,dm
\end{align*}
for every $g_1,g_2\in\Test(X)$ and bounded Lipschitz $h$, where
\[ \diver_m(h\nabla g)=h\Delta g+\langle\nabla h,\nabla g\rangle. \]
This tensor is unique and symmetric; moreover, $D(\Delta)\subset W^{2,2}(X)$. Local gradients, Laplacians, and Hessians are defined by the same identities with compactly supported tests, and the operators below are understood locally whenever the functions are not globally Sobolev.

Hessian traces and norms are taken in the reference tangent module, whose almost-everywhere rank is the essential dimension $k$~\cite{BS18}. On each piece of a measurable partition admitting a local orthonormal frame $e_1,\ldots,e_k$, we have
\[ \tr(\Hess f)=\sum_{\alpha=1}^k\Hess f(e_\alpha,e_\alpha), \qquad |\Hess f|^2=\sum_{\alpha,\beta=1}^k\bigl|\Hess f(e_\alpha,e_\beta)\bigr|^2. \]

Unlabelled differential operators act on $X$. Operators on $Y=X\times\R^3$ carry a superscript $Y$. Superscripts $i$ and $\mathrm E$ indicate the rescaled metric $d_i$ and the Euclidean limit, respectively. Subscripts $a,b,z$ on kernel derivatives indicate the argument being differentiated.

We use the Gaussian kernel and gradient estimates of Jiang--Li--Zhang~\cite{JLZ16}. For a point $y$, set
\[ u_t^y=-\log k_t^y,  \qquad D_t^y=\tr(\Hess u_t^y)-\Delta u_t^y, \]
\[ E_y(t)=\int_{X}k_t^y\log k_t^y\,dm, \qquad I_y(t)=\int_X|\nabla u_t^y|^2k_t^y\,dm, \]
\[  A_y(t) = \int_X \left[ |\Hess u_t^y|^2+\frac{{(D_t^y)}^2}{N-k} \right]k_t^y\,dm. \]

Gaussian estimates have the form
\begin{align}
	\frac{C^{-1}}{V_t(y)} \exp\left(-\frac{d{(y,z)}^2}{3t}-ct\right) & \leq k_t^y(z) \leq \frac{C}{V_t(y)} \exp\left(-\frac{d{(y,z)}^2}{5t}+ct\right), \notag \\
	|\nabla k_t^y|(z)                                             & \leq \frac{C}{\sqrt t\,V_t(y)} \exp\left(-\frac{d{(y,z)}^2}{5t}+ct\right),
	\label{eq:entropy-gaussian}
\end{align}
where $V_t(y)=m(B_{\sqrt t}(y))$. Annular summation gives, on every bounded positive time interval,
\begin{equation}\label{eq:entropy-basic-bounds}
	tI_y(t)\leq H,\qquad \int_{X}d{(y,z)}^2k_t^y(z)\,dm(z)\leq Ht, \qquad |E_y(t)+\log V_t(y)|\leq H.
\end{equation}
For the Fisher estimate, square the gradient bound and divide by the lower heat-kernel bound. The resulting Gaussian has summable annular tails. The entropy estimate follows from
\[ |\log(k_t^y(z)V_t(y))| \leq C+ct+C\frac{d{(y,z)}^2}{t}. \]

\subsection{Rank-sensitive dissipation}
For $f\in\Test(X)$, define the measure $\Gamma(f)$ by
\[ \int_X\phi\,d\Gamma(f) =-\frac12\int_X\langle\nabla \phi,\nabla |\nabla f|^2\rangle\,dm -\int_X\phi\,\langle\nabla f,\nabla \Delta f\rangle\,dm \]
for every $\phi\in\Lip_c(X)$.

\begin{prop}[Improved Bochner inequality {\cite{Han18}}]\label{prop:rank-bochner}
	For every $f\in\Test(X)$, the Bochner measure $\Gamma(f)$ satisfies
	\begin{equation}\label{eq:rank-improved-bochner}
		\Gamma(f) \geq \left[ |\Hess f|^2 + \frac{{(\tr\Hess f-\Delta f)}^2}{N-k} - K|\nabla f|^2 \right]m.
	\end{equation}
\end{prop}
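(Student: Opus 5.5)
This is Han's improved Bochner inequality, so the plan is to reconstruct its proof from the self-improvement of the Bakry--\'Emery condition together with the structure of the tangent module, whose rank is $k$ almost everywhere.

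\textbf{Starting point.} The $\RCD(-K,N)$ condition gives the dimensional Bochner inequality in measure form,
\[ \Gamma(f)\geq\Bigl[\tfrac1N(\Delta f)^2-K|\nabla f|^2\Bigr]m. \]
By the Savar\'e/Gigli self-improvement, the measure $\Gamma(f)$ has absolutely continuous part bounded below by $\bigl[|\Hess f|^2-K|\nabla f|^2\bigr]m$, and the singular part is nonnegative. The task is to upgrade the density lower bound by keeping track of the trace.

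\textbf{Main step: a pointwise polarized inequality.} Following the Bakry--Qian/Sturm strategy, I will apply the dimensional inequality to nonlinear combinations $\Phi(f_1,\dots,f_m)$ of test functions. Using the chain rules for $\Gamma$, $\Hess$ and $\Delta$, and then optimizing over the free quadratic coefficients of $\Phi$ on each frame piece, I aim to obtain the following: for every symmetric $k\times k$ matrix field $A$ and every scalar $c$, almost everywhere,
\[ \gamma(f)\geq|\Hess f|^2-K|\nabla f|^2+\frac1N\bigl(\tr A+c-\Delta f\bigr)^2-\ \text{correction in }(A-\Hess f,\ c). \]
Here $\gamma$ is the density of $\Gamma(f)$. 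The identity I want to arrive at is
\[ \Delta f=\tr\Hess f+(\Delta f-\tr\Hess f), \]
where the Hessian part lives in $k$ dimensions and the residual $D=\Delta f-\tr\Hess f$ lives in the remaining $N-k$ ``virtual'' dimensions.

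\textbf{Optimization.} Concretely, I split $\Hess f$ into its traceless part and its trace part in the rank-$k$ frame. The $N$-dimensional inequality applied this way gives
\[ \gamma\geq\Bigl|\Hess f-\tfrac{\tr\Hess f}{k}I\Bigr|^2+\frac{(\tr\Hess f)^2}{k}+\frac{D^2}{N-k}-K|\nabla f|^2, \]
using the elementary inequality
\[ \frac{(a+b)^2}{N}\leq\frac{a^2}{k}+\frac{b^2}{N-k}, \]
applied in reverse via the optimization over the free parameter $c$. Adding the two Hessian terms back together gives exactly~\eqref{eq:rank-improved-bochner}.

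\textbf{Main obstacle.} The hardest step is to realize the free parameters $A$ and $c$ through genuine test functions. This requires measurable choices of functions $g_\alpha$ whose gradients approximate the local frame $e_\alpha$. I would handle it by Lusin-type approximation: pick test functions whose gradients approximate the frame in $L^2$ on each partition piece, then use density of $\Test(X)$ and lower semicontinuity to pass to the limit. Rank exactly $k$ almost everywhere, from~\cite{BS18}, is the input that makes the trace and norms well defined.
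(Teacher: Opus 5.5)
Your outline is the Bakry--Qian self-improvement argument behind Han's theorem. The paper cites that theorem without reproducing it, and your maximization $\sup_a\bigl\{(D+a)^2/N-a^2/k\bigr\}=D^2/(N-k)$ over the trace shift $a$ is exactly what produces the rank-sensitive term, so the approach matches. One simplification: no frame approximation or lower semicontinuity is needed, because at almost every point the quadratic inequality holds for all real values of the coefficients of $\Phi$ (rational coefficients plus continuity, as in Savar\'e's argument), and in the second-order coefficients $\beta$ it is covariant under $\beta\mapsto G^{1/2}\beta G^{1/2}$, where $G$ is the Gram matrix of the gradients, so any $k$ test functions whose gradients are linearly independent on a Borel piece suffice.
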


\Needspace{8\baselineskip}
\begin{samepage}
\begin{prop}\label{prop:heat-identities}
	For every $y\in X$, the Fisher information $I_y$ is continuous and locally of bounded variation on $(0,\infty)$, $A_y\in L^1_{\mathrm{loc}}((0,\infty))$, and $dI_y\leq(-2A_y+2KI_y)\,dt$. In particular, for every $0<a<b$,
	\begin{equation}\label{eq:fisher-dissipation}
		I_y(b)+2\int_a^{b}A_y(t)\,dt \leq I_y(a)+2K\int_a^{b}I_y(t)\,dt.
	\end{equation}
\end{prop}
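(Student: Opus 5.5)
The approach is to derive the dissipation inequality from the improved Bochner inequality of Proposition~\ref{prop:rank-bochner}, applied to $u_t^y$. Since $u_t^y$ is neither a test function nor globally in $L^2$, I would work first with a regularized object and pass to limits at the end.

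\textbf{Step 1: smooth the Fisher information.} Write the Fisher information as
\[ I_y(t)=\int_X\frac{|\nabla k_t^y|^2}{k_t^y}\,dm. \]
For $\delta>0$ and a cutoff $\phi_R\in\Lip_c(X)$ with $\phi_R=1$ on $B_R(y)$, consider
\[ I^{\delta,R}(t)=\int_X\phi_R\,\frac{|\nabla k_t|^2}{k_t+\delta}\,dm . \]
Here $k_t=k_t^y$ is bounded, Lipschitz and in $D(\Delta)$ for $t>0$. By the regularity theory on $\RCD$ spaces, $P_{t}k$ is a test function, or can be approximated by test functions. Away from $t=0$ the time derivative is justified by the semigroup property, $k_{t+s}=P_sk_t$, together with the Gaussian bounds~\eqref{eq:entropy-gaussian}.

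\textbf{Step 2: differentiate and apply Bochner.} Differentiating and integrating by parts gives the classical Bakry--\'Emery computation, namely that $\frac{d}{dt}\int \frac{|\nabla k|^2}{k}$ equals $-2\int k\,d\Gamma(\log k)$ up to a total derivative. A cleaner route is to write $|\nabla k|^2/k=4|\nabla\sqrt k|^2$, or to use the weak formulation with the measure $\Gamma$ applied to $\log(k_t+\delta)$, which is a test function since $k_t+\delta\geq\delta$. Using the chain rule for $\Gamma$, this gives
\[ \frac{d}{dt}I^{\delta,R}\leq -2\int\phi_R(k_t+\delta)\,d\Gamma(\log(k_t+\delta))+\text{(cutoff terms)}+\text{($\delta$-terms)}. \]
I then insert~\eqref{eq:rank-improved-bochner} with $f=\log(k_t+\delta)$. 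This produces
\[ -2\int\phi_R\Bigl[|\Hess f|^2+\tfrac{(\tr\Hess f-\Delta f)^2}{N-k}\Bigr](k+\delta)\,dm+2K\int\phi_R|\nabla f|^2(k+\delta)\,dm. \]

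\textbf{Step 3: integrate in time and pass to the limit.} Integrate from $a$ to $b$. The cutoff terms involve $|\nabla\phi_R|$ against heat-kernel gradients and Laplacians, and they vanish as $R\to\infty$ by the Gaussian tails and~\eqref{eq:entropy-basic-bounds}; one also needs $L^2$ bounds on $\Delta k_t$, which come from analyticity of the semigroup. The $\delta$-terms come from the discrepancy between $\partial_t(k+\delta)$ and $\Delta\log(k+\delta)$ relations; for $\delta\to0$ they are controlled by dominated convergence, using the lower Gaussian bound on compact sets. For the Hessian terms, as $\delta\to0$ on $B_R$ we have $k\geq c_R>0$, so $\log(k+\delta)\to u$ (up to sign) in $W^{2,2}_{\mathrm{loc}}$. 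Fatou's lemma then gives the lower semicontinuity needed for the $A_y$ integral, and in particular shows $A_y\in L^1_{\mathrm{loc}}$. The result is~\eqref{eq:fisher-dissipation} for all $0<a<b$.

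\textbf{Step 4: regularity of $I_y$.} Continuity of $I_y$ follows from continuity of $t\mapsto\nabla k_t$ in $L^2_{\mathrm{loc}}$, combined with uniform Gaussian tails and positivity bounds. Because the inequality holds for all subintervals, $I_y-2K\int I_y$ is nonincreasing. Hence $I_y$ is locally BV, and the measure inequality $dI_y\leq(-2A_y+2KI_y)\,dt$ follows.

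\textbf{Main obstacle.} The hardest step is Step 2: rigorously applying the measure-valued Bochner inequality to $\log(k_t+\delta)$, with the weight $k_t+\delta$ and a cutoff, and accounting for the singular part of $\Gamma$. My first attempt would be to use that $\phi_R(k+\delta)\geq0$ is Lipschitz, so it can be tested against $\Gamma$ (after approximation by $\Lip_c$), and that the singular part of $\Gamma$ is nonnegative and can simply be dropped.
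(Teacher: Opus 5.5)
Your proposal follows essentially the same route as the paper. Both regularize the Fisher information by $|\nabla k_t^y|^2/(k_t^y+\epsilon)$, apply the improved Bochner inequality to the regularized logarithm with weight $(k_t^y+\epsilon)$ times a cutoff, exhaust $X$, and pass $\epsilon\to0$ via local $L^2$ convergence of derivatives, lower semicontinuity, and Gaussian tails. The same tails give continuity and bounded variation, and the measure inequality follows.

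Two details are cosmetic. The paper uses $-\log(1+k_t^y/\epsilon)$, which vanishes at zero and is therefore a genuine global test function even when $m(X)=\infty$. Your $\log(k_t+\delta)$ differs from it by the constant $\log\delta$, which $\Gamma$ and the Hessian do not see. Also, no separate $\delta$-error terms actually arise, because $k_t+\delta$ itself solves the heat equation, so the Bakry--\'Emery identity is exact up to cutoff terms.
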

\end{samepage}
\begin{proof}
	 By ~\cite{BGHZ21, BS18},  $k_t^y$ locally bounded away from zero and locally a test function. The Sobolev chain rules therefore apply to $u_t^y$. On compact positive-time intervals, the kernel is continuous with values in $W^{1,2}$, while $\Delta k_t^y\in W^{1,2}$ and $\Hess k_t^y\in L^2$.

	Fix $\epsilon>0$ and set
	\[ J_\epsilon(t)=\int_X\frac{|\nabla k_t^y|^2}{k_t^y+\epsilon}\,dm. \]
	Since the scalar function $r\mapsto-\log(1+r/\epsilon)$ vanishes at zero, its composition with the heat kernel belongs to the global positive-time test class. Apply~\eqref{eq:rank-improved-bochner} to $-\log(1+k_t^y/\epsilon)$ with weight $(k_t^y+\epsilon)\zeta_R$, where $\zeta_R$ is supported in $B_{2R}(y)$, equals one on $B_R(y)$, and satisfies $|\nabla\zeta_R|\leq C/R$. The positive-time derivative estimates bound the time-integrated cutoff terms on $[a,b]$ by $C_{\epsilon,a,b}/R$. Differentiating $J_\epsilon$ and exhausting $X$ consequently gives, for almost every $t>0$,
	\[ J_\epsilon'(t)=-2\int_X(k_t^y+\epsilon)\,d\Gamma\bigl(-\log(1+k_t^y/\epsilon)\bigr). \]
	The Bochner inequality, integrated between $a$ and $b$, yields
	\begin{align*}
		J_\epsilon(b)&+2\int_a^b\int_X(k_t^y+\epsilon)\bigl|\Hess[-\log(1+k_t^y/\epsilon)]\bigr|^2\,dm\,dt\\
		&+\frac{2}{N-k}\int_a^b\int_X(k_t^y+\epsilon)\bigl|(\tr\Hess-\Delta)[-\log(1+k_t^y/\epsilon)]\bigr|^2\,dm\,dt\\
		&\leq J_\epsilon(a)+2K\int_a^b J_\epsilon(t)\,dt.
	\end{align*}
	As $\epsilon\to0^+$, monotone convergence gives $J_\epsilon(t) \to I_y(t)$. On compact subsets of $(0,\infty)\times X$, the positive lower bound for the kernel and the Sobolev chain rules give $L^2$ convergence of the gradients, Hessians, and Laplacians of $-\log(1+k_t^y/\epsilon)$ to those of $u_t^y$. Restricting first to compact balls, using lower semicontinuity for the two nonnegative quadratic terms, and then exhausting $X$ proves~\eqref{eq:fisher-dissipation}.
	The same local derivative convergence, combined with the uniform Gaussian tails obtained from~\eqref{eq:entropy-gaussian}, proves continuity of $I_y$ at positive times. Thus $I_y$ has its actual integral values, is locally of bounded variation, and satisfies
	\[ dI_y\leq(-2A_y+2KI_y)\,dt. \]
	Any singular part of $dI_y$ is nonpositive. Equations~\eqref{eq:entropy-basic-bounds} and~\eqref{eq:fisher-dissipation} also give the dyadic estimate
	\[ \int_\tau^{2\tau}A_y(t)\,dt\leq \frac{C}{\tau}. \]

\end{proof}

\subsection{Measured tangent cones and heat asymptotics}\label{sec:almost-splitting}
Let $(X,d,m)$ be a metric measure space. For $x\in X$ and $r>0$, set
\[ a_x(r)=\int_{B_r(x)} \left(1-\frac{d(x,y)}r\right)\,dm(y), \qquad \gamma_j=\frac{j+1}{\omega_j}\quad(j\geq1). \]
We call $(X,r^{-1}d,a_x{(r)}^{-1}m,x)$ the canonically normalized blow-up at $x$ and scale $r$.

\begin{lem}\label{lem:canonical-measured-tangent}
	Let $(X,d,m)$ be an $\RCD(-K,N)$ space, where $K\geq0$ and $1<N<\infty$. Fix a positive integer $j$ and a point $x\in\cR_j(X)$. Then, as $r\to0^+$,
	\[ (X,r^{-1}d,a_x{(r)}^{-1}m,x) \xrightarrow{\pmGH} (\R^j,d_{\mathrm E},\gamma_j\Leb^j,0). \]
	Consequently, for every $\lambda>0$,
	\[ \lim_{r\to0^+} \frac{m(B_{\lambda r}(x))}{m(B_r(x))} =\lambda^j. \]
\end{lem}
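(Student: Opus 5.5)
The plan is a compactness-and-uniqueness argument on measured blow-ups. Fix any sequence $r_i\to0^+$ and consider $(X,r_i^{-1}d,a_x(r_i)^{-1}m,x)$. Rescaling turns the $\RCD(-K,N)$ condition into $\RCD(-r_i^2K,N)$, so these are $\RCD(-K,N)$ spaces for $r_i\leq1$. With the canonical normalization, the rescaled measure satisfies
\[ \int_{B_1(x)}\bigl(1-d_i(x,\cdot)\bigr)\,dm_i=1, \]
where $d_i=r_i^{-1}d$ and $m_i=a_x(r_i)^{-1}m$. Bishop--Gromov doubling on the rescaled spaces then bounds $m_i(B_R(x))$ above and $m_i(B_{1/2}(x))$ below, uniformly in $i$ for each fixed $R$. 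By Gromov's precompactness theorem for uniformly doubling pointed metric measure spaces, a subsequence converges in pmGH to some $(Y,d_Y,\mu,o)$. By stability of the $\RCD$ condition, this limit is an $\RCD(0,N)$ space.

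Next I identify the limit. Its metric part is a tangent cone at $x$, so it is isometric to $(\R^j,d_{\mathrm E},0)$ because $x\in\cR_j(X)$. The function $1-d_i(x,\cdot)$ is continuous with bounded support, so the normalization passes to the limit:
\[ \int_{B_1(0)}(1-|y|)\,d\mu(y)=1. \]
It remains to show that $\mu$ is a multiple of Lebesgue measure. For this I use the fact that $(\R^j,d_{\mathrm E},\mu)$ is $\RCD(0,N)$ with $\mu$ of full support. The splitting theorem, applied along the $j$ independent lines through the origin, splits off $\R$ one factor at a time as a metric measure space. This yields $\mu=c\,\Leb^1\otimes\mu'$, and iterating gives $\mu=c\,\Leb^j$. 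Equivalently, by Gigli's splitting, each coordinate function is harmonic with Hessian zero, so its flow preserves $\mu$; thus $\mu$ is translation invariant and hence a multiple of $\Leb^j$.

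The constant is fixed by the normalization:
\[ c\int_{B_1}(1-|y|)\,dy=c\,\omega_j\Bigl(1-\frac{j}{j+1}\Bigr)=\frac{c\,\omega_j}{j+1}. \]
Setting this equal to $1$ gives $c=\gamma_j$. Since every subsequence has a further subsequence converging to the same limit, the whole family converges as $r\to0^+$.

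For the ratio statement, I use $m(B_{\lambda r}(x))/m(B_r(x))=m_r(B_\lambda(x))/m_r(B_1(x))$ with $m_r=a_x(r)^{-1}m$. Under pmGH convergence, measures of balls converge whenever the limit measure does not charge the boundary sphere. This holds because $\Leb^j(\partial B_\lambda)=0$, and on $\RCD$ spaces the measure of spheres also vanishes by Bishop--Gromov. Hence the ratio tends to $\gamma_j\omega_j\lambda^j/(\gamma_j\omega_j)=\lambda^j$.

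The main obstacle is the rigidity step showing that $\mu$ is a multiple of Lebesgue measure. A weighted $\R^j$ with $\mu=e^{-V}\Leb^j$ and $V$ convex can satisfy $\RCD(0,N)$ when $N$ is infinite, but for finite $N$ the $\mathrm{CD}(0,N)$ condition with $N>j$ allows only $e^{-V}$ such that $e^{-V/(N-j)}$ is concave on $\R^j$. A positive concave function on all of $\R^j$ is constant, so $V$ is constant. I would invoke this route, or the splitting theorem, to conclude translation invariance.
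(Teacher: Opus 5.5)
Your proposal is correct and follows essentially the same route as the paper. Both arguments use precompactness of the canonically normalized blow-ups with $\RCD(0,N)$ limits, identify the metric with $\R^j$ by strong regularity, identify the measure with $\gamma_j\Leb^j$ via the splitting argument and the normalization $\int_{B_1}(1-|y|)\,dy=\omega_j/(j+1)$, obtain convergence of the full family from uniqueness of the limit, and use null Euclidean spheres for the ball-volume ratios. You write out the doubling, stability, and splitting steps that the paper takes from Kitabeppu.
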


\begin{proof}
	By~\cite[Theorem~2.3]{Kit17}, the canonically normalized blow-ups are precompact, and every limit is an $\RCD(0,N)$ space. The hypothesis identifies its metric with $\R^j$, while the splitting argument in~\cite[Remark~4.3]{Kit17} identifies its measure with $\gamma_j\Leb^j$. Hence the full family converges to this unique normalized limit. Since Euclidean spheres have zero Lebesgue measure,
	\[ \frac{m(B_{\lambda r}(x))}{a_x(r)} \longrightarrow \gamma_j\omega_j\lambda^j=(j+1)\lambda^j. \]
	Dividing by the same limit with $\lambda=1$ proves the ball-volume ratio.
\end{proof}

Harmonic almost splitting maps originate in the theory of Cheeger--Colding~\cite{CC96}. We record the following consequence of the RCD existence theorem in an $L^2$ formulation. Here $\fint_B f\,dm=m(B)^{-1}\int_B f\,dm$; the squared Gram error is equivalent to~\cite[Definition~1.3]{BPS21} after changing the small parameter, by the uniform gradient bound.

\begin{prop}[{\cite{BPS21}}]\label{prop:regular-almost-splitting}
	Let $(X,d,m)$ be an $\RCD(-K,N)$ space, where $K\geq0$ and $1<N<\infty$, and let $z\in\cR_j(X)$ for a positive integer $j$. Given $r_i\to0^+$, put
	\[ d_i=r_i^{-1}d,\qquad m_i=\frac{m}{m(B_{r_i}(z))}. \]
	For every fixed $R>1$ and all sufficiently large $i$, there are numbers $\delta_i\to0$ and $(j,\delta_i)$-almost splitting maps
	\[ h_i=(h_i^1,\ldots,h_i^j):B_{5R}^{d_i}(z)\longrightarrow\R^j,\qquad h_i(z)=0, \]
	with the following properties for every $a,b\in\{1,\ldots,j\}$:
	\begin{enumerate}
		\item[(i)] $\Delta^i h_i^a=0$.
		\item[(ii)] $|\nabla^i h_i^a|\leq C_N$.
		\item[(iii)] $\displaystyle\fint_{B_{5R}^{d_i}(z)}|\langle\nabla^i h_i^a,\nabla^i h_i^b\rangle-\delta_{ab}|^2\,dm_i\leq\delta_i$.
		\item[(iv)] $\displaystyle(5R)^2\fint_{B_{5R}^{d_i}(z)}\sum_{a=1}^j|\Hess^i h_i^a|^2\,dm_i\leq\delta_i$.
	\end{enumerate}
	In particular, the corresponding unnormalized Gram and Hessian integrals over $B_{4R}^{d_i}(z)$ tend to zero.
\end{prop}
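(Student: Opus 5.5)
The plan is to derive Proposition~\ref{prop:regular-almost-splitting} from the existence theorem for $\delta$-splitting maps in~\cite{BPS21}, applied to the rescaled spaces $(X,d_i,m_i)$. First, the rescaled space $(X,d_i,m_i)$ is an $\RCD(-Kr_i^2,N)$ space, since multiplying the measure by a constant does not change the curvature-dimension condition. Because $z\in\cR_j(X)$, every tangent cone at $z$ is $\R^j$. Combining this with Lemma~\ref{lem:canonical-measured-tangent}, which gives convergence of the normalized measures (renormalizing by $m(B_{r_i}(z))$ instead of $a_z(r_i)$ only changes the constant $\gamma_j$ to $\omega_j^{-1}$), we obtain
\[ (X,d_i,m_i,z)\to(\R^j,d_{\mathrm E},\omega_j^{-1}\Leb^j,0) \]
in the pmGH sense. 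This happens along the full family of scales, not merely a subsequence. Consequently, for fixed $R$, the balls $B^{d_i}_{50R}(z)$ are $\epsilon_i$-GH close to Euclidean balls, with $\epsilon_i\to0$, and the lower curvature bound $-Kr_i^2$ tends to $0$.

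Next I invoke the quantitative statement in~\cite{BPS21}. Given $\delta>0$, there is $\epsilon(\delta,N)$ such that if $\RCD(-\epsilon,N)$ holds and a ball of radius $5R$ (after scaling to unit size) is $\epsilon$-close to a Euclidean $j$-ball, then a $(j,\delta)$-splitting map exists on it. Such a map is harmonic, has $|\nabla h^a|\leq C_N$, and satisfies small $L^1$ Gram error and small scale-invariant Hessian energy. I will apply this with the radius $5R$ rescaled to $1$, which is legitimate since all quantities in (iii) and (iv) are scale invariant with the factor $(5R)^2$. A diagonal choice $\delta_i\downarrow0$ with $\epsilon(\delta_i,N)\geq\max(\epsilon_i,Kr_i^2)$ for large $i$ then produces the maps $h_i$. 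Subtracting the constants $h_i(z)$ preserves (i)--(iv) and gives $h_i(z)=0$; here $h_i$ is continuous because it is harmonic and hence locally Lipschitz.

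To pass from the $L^1$ Gram bound of~\cite[Definition~1.3]{BPS21} to the squared bound in (iii), I use $|\langle\nabla h^a,\nabla h^b\rangle-\delta_{ab}|\leq C_N^2+1$, which gives
\[ \fint|\cdot|^2\leq (C_N^2+1)\fint|\cdot|. \]
So I only need to feed~\cite{BPS21} the smaller parameter $\delta_i/(C_N^2+1)$.

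Finally, I derive the unnormalized statement on $B^{d_i}_{4R}(z)$. By Bishop--Gromov, or directly from the measure convergence, $m_i(B^{d_i}_{5R}(z))\to\omega_j^{-1}\omega_j(5R)^j$, which is bounded. Multiplying the averaged bounds by this volume and restricting to the smaller ball shows that the integrals tend to $0$.

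I expect the main obstacle to be the uniformity in $i$. The existence theorem must be applied with constants depending only on $N$ (and $R$ after rescaling), and the hypothesis needs GH-closeness at the single scale $5R$ (or a slightly larger one, such as $50R$, if~\cite{BPS21} requires closeness on a doubled ball). The strong regularity assumption is exactly what guarantees this closeness for every sequence $r_i\to0$, not just for a subsequence.
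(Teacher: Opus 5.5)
Your proposal is correct and follows essentially the same route as the paper. Both arguments use full-family convergence of the normalized blow-ups to $(\R^j,d_{\mathrm E},\omega_j^{-1}\Leb^j,0)$ via Lemma~\ref{lem:canonical-measured-tangent}, apply the cited existence theorem at scale $R$ with curvature parameter tending to zero, use the uniform gradient bound to convert the $L^1$ Gram error into the $L^2$ one, subtract $h_i(z)$, and use $m_i(B^{d_i}_{5R}(z))\to(5R)^j$ for the unnormalized integrals. The paper additionally makes explicit one step you leave implicit: it invokes the existence theorem with synthetic parameter $N+2$ and a one-point residual factor, so that the model $\R^j=\R^j\times\{\mathrm{pt}\}$ fits the theorem's product-type hypothesis while the gradient constant still depends only on $N$.
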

\begin{proof}
	Lemma~\ref{lem:canonical-measured-tangent} gives convergence of the canonically normalized blow-ups to Euclidean space, and $KR^2r_i^2\to0$. Apply the cited existence theorem at scale $R$ with synthetic parameter $N+2$ and a one-point residual factor; the gradient constant still depends only on $N$. Multiplication of the measure by a constant preserves the normalized splitting conditions. The uniform gradient bound converts the $L^1$ Gram estimate to the displayed $L^2$ estimate; relabel the small parameter and subtract constants to arrange $h_i(z)=0$. Finally, $m_i(B_{5R}^{d_i}(z))\to(5R)^j$, so the two unnormalized integrals tend to zero.
\end{proof}

\begin{lem}\label{lem:regular-heat-asymptotics}
	Let $z\in\cR_j(X)$ for a positive integer $j$. Then:
	\begin{enumerate}
		\item[(i)] As $t\to0^+$, the entropy satisfies
		\begin{equation}\label{eq:entropy-euclidean-growth}
			E_z(t) = \frac j2\log(1/t)+o(\log(1/t)).
		\end{equation}
		\item[(ii)] The Hessian energy satisfies
		\begin{equation}\label{eq:hessian-liminf}
			\liminf_{t\to0^+} t^2\int_X|\Hess u_t^z|^2k_t^z\,dm \geq\frac j4.
		\end{equation}
	\end{enumerate}
\end{lem}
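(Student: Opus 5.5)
Part (i) follows from the entropy bound \eqref{eq:entropy-basic-bounds}, $|E_z(t)+\log V_t(z)|\leq H$ for $t\in(0,1]$, together with the volume asymptotics at a strongly $j$-regular point. By Lemma~\ref{lem:canonical-measured-tangent}, $m(B_{\lambda r}(z))/m(B_r(z))\to\lambda^j$ for every $\lambda>0$. Taking $\lambda=1/2$ and iterating over dyadic scales, for every $\eta>0$ there is $r_\eta$ such that
\[ (j-\eta)\log 2\leq \log m(B_{2^{-i}r_\eta}(z))-\log m(B_{2^{-i-1}r_\eta}(z))\leq (j+\eta)\log 2 \]
for all $i\geq0$. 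Summing these bounds, and handling intermediate radii by Bishop--Gromov doubling, gives $\log V_t(z)=\frac j2\log t+o(\log(1/t))$, hence $-\log V_t(z)=\frac j2\log(1/t)+o(\log(1/t))$. This is a routine argument.

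For part (ii) we argue by contradiction along a sequence $t_i\to0$ realizing the liminf, and rescale by $r_i=\sqrt{t_i}$ with $d_i=r_i^{-1}d$ and $m_i=m/m(B_{r_i}(z))$. In the rescaled space the heat kernel at time $1$ with respect to $m_i$ is $\tilde k=m(B_{r_i}(z))\,k^z_{t_i}$. Hence $\tilde u=u^z_{t_i}-\log m(B_{r_i}(z))$, and $t_i^2\int|\Hess u^z_{t_i}|^2k^z_{t_i}\,dm=\int|\Hess^i\tilde u|^2\tilde k\,dm_i$, so the quantity is scale invariant. The model value is $j/4$: on $\R^j$ we have $u=|x|^2/4+c$ and $\Hess u=\frac12 g$, which gives $|\Hess u|^2=j/4$.

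For the lower bound we test against the almost splitting maps $h_i$ of Proposition~\ref{prop:regular-almost-splitting}. Using the pointwise Cauchy--Schwarz inequality
\[ |\Hess\tilde u|^2\geq \frac{\bigl(\sum_a\Hess\tilde u(\nabla h^a,\nabla h^a)\bigr)^2}{\sum_a|\nabla h^a|^4}\quad\text{or better}\quad |\Hess\tilde u|\,|G|\geq\langle\Hess\tilde u,G\rangle, \]
with $G=\sum_a dh^a\otimes dh^a$, which is close to an orthogonal projection of rank $j$ in $L^2$ by (iii), we obtain
\[ \Bigl(\int|\Hess\tilde u|^2\tilde k\Bigr)^{1/2}\Bigl(\int|G|^2\tilde k\phi\Bigr)^{1/2}\geq\int\phi\,\langle\Hess\tilde u,G\rangle\tilde k \]
for a cutoff $\phi$ supported in $B^{d_i}_{4R}(z)$. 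Then $\int|G|^2\tilde k\phi\to j\int_{B_{4R}}\phi\,p_1\,d\mathcal L^j$ (suitably normalized), where $p_1$ is the Euclidean heat kernel. To evaluate the right side, integrate by parts once: $\langle\Hess\tilde u,dh^a\otimes dh^a\rangle=\langle\nabla\langle\nabla\tilde u,\nabla h^a\rangle,\nabla h^a\rangle-\Hess h^a(\nabla\tilde u,\nabla h^a)$. The second term vanishes in the limit by (iv), the gradient bound on $h$, and the Gaussian bound on $|\nabla\tilde u|$, which grows at most linearly in distance. For the first term, since $\Delta h^a=0$, integration by parts against $\phi\tilde k$ gives $-\int\langle\nabla\tilde u,\nabla h^a\rangle\langle\nabla(\phi\tilde k),\nabla h^a\rangle$. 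This is a first-order expression in $\tilde k$, $\nabla\tilde k$, and $\nabla h$, so it passes to the limit by heat-kernel convergence under pmGH convergence (the kernels converge uniformly with $W^{1,2}$ convergence on balls) together with $h_i\to$ coordinate projections. In the limit this equals the Euclidean value $\frac12\int\phi p_1$ per coordinate, up to $\nabla\phi$ terms. Letting $R\to\infty$ and using the Gaussian tails to remove the cutoff and the outside-ball contributions, we get $\liminf\geq (j/2)^2/j=j/4$.

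The main obstacle is justifying the convergence of the mixed term $\int\langle\nabla\tilde u,\nabla h^a\rangle\langle\nabla\tilde k,\nabla h^a\rangle\phi$, which is a product of weakly converging gradients. I would first write $\nabla\tilde u\,\tilde k=-\nabla\tilde k$, so that everything becomes quadratic in $\nabla\tilde k$ divided by $\tilde k$, which is bounded below on $B_{4R}$. I would then invoke strong $W^{1,2}$ convergence of heat kernels under pmGH convergence (Ambrosio--Honda), combined with the strong convergence of $\nabla h_i$ that follows from (iii) and (iv), and use the uniform gradient bounds to control the products.
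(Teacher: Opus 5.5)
Your proposal is correct. Part (i) is the paper's argument; the paper handles intermediate radii by monotonicity of $r\mapsto m(B_r(z))$ rather than doubling, which makes no difference.

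For part (ii) you share the paper's architecture:
\begin{itemize}
\item rescaling to time one;
\item the almost splitting maps;
\item tensor Cauchy--Schwarz against $\sum_a dh^a\otimes dh^a$, whose squared norm tends to $j$;
\item the product rule trading $\Hess\tilde u(\nabla h^a,\nabla h^a)$ for $\langle\nabla\tilde u,\nabla h^a\rangle^2$ plus a $\Hess h^a$ error.
\end{itemize}
Where you differ is the key lower bound $\sum_a\int\langle\nabla\tilde u,\nabla h^a\rangle^2\tilde k\,dm_i\geq j/2-o(1)$.

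You pass this first-order quantity to the Euclidean limit. That requires:
\begin{itemize}
\item strong local $W^{1,2}$ convergence of the rescaled heat kernels and of the $h_i$;
\item identification of the limit maps as orthonormal coordinates, which comes from Ambrosio--Honda harmonic stability rather than from (iii)--(iv) alone as you suggest;
\item strong convergence of the gradient pairings.
\end{itemize}
All of this is available, and the paper itself uses exactly this machinery in Section~\ref{sec:stress-limit}.

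The paper avoids first-order stability entirely. It cuts off the maps, $\bar h_i^a=\chi_ih_i^a$, and uses the heat-equation identity $\frac{d}{ds}\int\sum_a(\bar h_i^a)^2k_{i,s}^z\,dm_i=2\int a_ik_{i,s}^z\,dm_i$. With only zero-order heat-kernel stability this gives $\int a_ik_{i,1}^z\,dm_i\to j$ and $\int\sum_a(\bar h_i^a)^2k_{i,1}^z\,dm_i\to 2j$. Weighted integration by parts plus Cauchy--Schwarz then yields $\int\sum_a\langle\nabla^i\log k_{i,1}^z,\nabla^i\bar h_i^a\rangle^2k_{i,1}^z\,dm_i\geq j^2/(2j)-o(1)$.

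So your route computes the actual limiting value and mirrors the Euclidean model computation. The paper's route yields only the one-sided bound that is needed, but with much lighter convergence theory. It never identifies the limits of $h_i$ or of any gradient pairing.

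One small correction: the bounds \eqref{eq:entropy-gaussian} do not give linear growth of $|\nabla\tilde u|$. The ratio of the gradient bound to the lower kernel bound grows like $e^{cd^2}$. However, you only need $|\nabla\tilde u|$ bounded on the fixed ball $B^{d_i}_{4R}(z)$, which these bounds do give, so your $\Hess h^a$ error term still vanishes.
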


\begin{proof}
	Lemma~\ref{lem:canonical-measured-tangent} gives the canonically normalized Euclidean measured tangent cone and, in particular,
	\[ \frac{m(B_{r/2}(z))}{m(B_r(z))}\longrightarrow2^{-j} \qquad(r\to0^+). \]
	Fix $0<\eta<j$. There is $r_\eta\in(0,1)$ such that
	\[ 2^{-(j+\eta)}\leq\frac{m(B_{r/2}(z))}{m(B_r(z))}\leq2^{-(j-\eta)} \qquad(0<r\leq r_\eta). \]
	For $r_q=2^{-q}r_\eta$, iteration of these inequalities gives, for every integer $q\geq0$,
	\[ m(B_{r_\eta}(z))\,2^{-q(j+\eta)}\leq m(B_{r_q}(z))\leq m(B_{r_\eta}(z))\,2^{-q(j-\eta)}. \]
	To pass from dyadic scales to arbitrary radii, choose $q$ so that $r_{q+1}<r\leq r_q$. Monotonicity of ball volumes then gives
	\[ m(B_{r_\eta}(z))\,2^{-(q+1)(j+\eta)}\leq m(B_r(z))\leq m(B_{r_\eta}(z))\,2^{-q(j-\eta)}. \]
	Taking negative logarithms and using $-\log r=q\log2+O_\eta(1)$ yields
	\[ j-\eta\leq\liminf_{r\to0^+}\frac{\log m(B_r(z))}{\log r}\leq\limsup_{r\to0^+}\frac{\log m(B_r(z))}{\log r}\leq j+\eta. \]
	Letting $\eta\to0^+$ proves $\log m(B_r(z))/\log r\to j$. At $r=\sqrt t$, this becomes
	\[ \log m(B_{\sqrt t}(z))=-\frac j2\log(1/t)+o(\log(1/t)). \]
	Finally, the entropy bound $|E_z(t)+\log m(B_{\sqrt t}(z))|\leq H$ in~\eqref{eq:entropy-basic-bounds} gives
	\[ E_z(t)=-\log m(B_{\sqrt t}(z))+O(1)=\frac j2\log(1/t)+o(\log(1/t)), \]
	which proves~\eqref{eq:entropy-euclidean-growth}.

	To prove~\eqref{eq:hessian-liminf}, fix an arbitrary decreasing sequence $t_i\to0^+$ and rescale by
	\[ d_i=t_i^{-1/2}d,\qquad m_i=\frac{m}{m(B_{\sqrt{t_i}}(z))}. \]
	Fix $R>1$. Proposition~\ref{prop:regular-almost-splitting}, applied with $r_i=\sqrt{t_i}$, gives $(j,\delta_i)$-almost splitting maps $h_i:B_{5R}^{d_i}(z)\to\R^j$, with $\delta_i\to0$ and $h_i(z)=0$, satisfying the four properties stated there.

	Choose a global test cutoff $\chi_i$ equal to one on $B_R(z)$, supported in $B_{2R}(z)$, and satisfying
	\[ |\nabla^i\chi_i|\leq C/R,\qquad |\Delta^i\chi_i|\leq C/R^2. \]
	Set $\bar{h}_i^a=\chi_{i}h_i^a$. Then $|\nabla^i \bar{h}_i^a|\leq C$, $|\bar{h}_i^a|\leq CR$, and $|\Delta^i \bar{h}_i^a|\leq C/R$ on the cutoff annulus. The Hessian product rule, the Hessian estimate for $\chi_i$, the vanishing core Hessian energy, and $m_i(B_{2R}(z))=O(R^j)$ give
	\begin{equation}\label{eq:cutoff-coordinate-hessian}
		\limsup_i \int_X\sum_{a=1}^j|\Hess^i \bar{h}_i^a|^2\,dm_i \leq CR^{j-2}.
	\end{equation}
	The calculation remains valid for $j=1$.

	Let $k_{i,s}^x$ denote the heat-kernel function of the rescaled normalized space $(X,d_i,m_i)$ with pole $x$. Fix the point $z$ and put
	\[ a_i=\sum_{a=1}^j \bigl(|\nabla^i\bar{h}_i^a|^2+\bar{h}_i^a\Delta^i \bar{h}_i^a\bigr). \]
	The functions $a_i$ are uniformly bounded and converge to $j$ in $L^1(B_R,m_i)$. Heat-kernel stability~\cite[Theorem~3.3]{AHT18} and the Gaussian bounds give, for each fixed $\tau>0$,
	\[ \limsup_i\sup_{\tau\leq s\leq1}\left|\int_X a_i k_{i,s}^z\,dm_i-j\right|\leq CT(R), \qquad T(R)\longrightarrow0. \]
	Here $T(R)$ also bounds the heat-kernel mass outside $B_R$, uniformly for $0<s\leq1$. Since the $\bar{h}_i^a$ are compactly supported test functions vanishing at $z$,
	\[ \frac{d}{ds}\int_X\sum_a(\bar{h}_i^a)^2k_{i,s}^z\,dm_i=2\int_X a_i k_{i,s}^z\,dm_i. \]
	The moment integral on the left tends to zero as $s\to0^+$. The contribution of $(0,\tau)$ is $O(\tau)$. Integrating and then letting $i\to\infty$ and $\tau\to0^+$ yields
	\begin{equation}\label{eq:coordinate-heat-moments}
		\begin{aligned}
		\limsup_i\left|\int_X a_i k_{i,1}^z\,dm_i-j\right|&\leq CT(R),\\
		\limsup_i\left|\int_X\sum_a(\bar{h}_i^a)^2k_{i,1}^z\,dm_i-2j\right|&\leq CT(R).
		\end{aligned}
	\end{equation}

	Let
	\[ C_i=\int_X\left|\sum_{a=1}^j\nabla^i\bar{h}_i^a\otimes\nabla^i\bar{h}_i^a\right|^2k_{i,1}^z\,dm_i. \]
	Weighted integration by parts gives
	\begin{align*}
	&-\int_X\sum_{a=1}^j\bar{h}_i^a\langle\nabla^i \log k_{i,1}^z,\nabla^i\bar{h}_i^a\rangle k_{i,1}^z\,dm_i\\
	&\qquad=\int_X a_i k_{i,1}^z\,dm_i.
	\end{align*}
	Cauchy--Schwarz against $\sum_a(\bar{h}_i^a)^2k_{i,1}^z$ shows that
	\begin{align*}
	&\int_X\sum_{a=1}^j\langle\nabla^i \log k_{i,1}^z,\nabla^i\bar{h}_i^a\rangle^2k_{i,1}^z\,dm_i\\
	&\qquad\geq\frac{\left(\int_X a_i k_{i,1}^z\,dm_i\right)^2}{\int_X\sum_a(\bar{h}_i^a)^2k_{i,1}^z\,dm_i}.
	\end{align*}
	Thus~\eqref{eq:coordinate-heat-moments} gives
	\[ \liminf_i\int_X\sum_{a=1}^j\langle\nabla^i \log k_{i,1}^z,\nabla^i\bar{h}_i^a\rangle^2k_{i,1}^z\,dm_i\geq\frac j2-e(R), \qquad e(R)\longrightarrow0. \]
	The $L^2$ Gram convergence on the core and the same Gaussian tail estimate give
	\[ \limsup_i|C_i-j|\leq CT(R). \]

	The Hessian polarization identity for the compactly supported $\bar{h}_i^a$ gives
	\begin{align*}
	&-\int_X\sum_{a=1}^j\Hess^i \log k_{i,1}^z(\nabla^i\bar{h}_i^a,\nabla^i\bar{h}_i^a)\,k_{i,1}^z\,dm_i\\
	&\qquad=\int_X\sum_{a=1}^j\langle\nabla^i \log k_{i,1}^z,\nabla^i\bar{h}_i^a\rangle^2k_{i,1}^z\,dm_i-R_i,
	\end{align*}
	where
	\begin{align*}
	R_i={}&-\int_X\sum_{a=1}^j\Bigl[\Hess^i\bar{h}_i^a(\nabla^i \log k_{i,1}^z,\nabla^i\bar{h}_i^a)\\
	&\qquad+\langle\nabla^i \log k_{i,1}^z,\nabla^i\bar{h}_i^a\rangle\Delta^i\bar{h}_i^a\Bigr]k_{i,1}^z\,dm_i.
	\end{align*}
	Rewrite every factor $k_{i,1}^z\nabla^i \log k_{i,1}^z$ as $\nabla^i k_{i,1}^z$. On the core, $\Delta^i\bar{h}_i^a=0$, the gradients of $k_{i,1}^z$ are uniformly bounded, and the coordinate Hessian energies tend to zero. Hence the core part of $R_i$ vanishes. On the cutoff annulus, the Gaussian gradient bound is $O(e^{-R^2/5})$. Equation~\eqref{eq:cutoff-coordinate-hessian} and Cauchy--Schwarz bound the Hessian contribution by $CR^{j-1}e^{-R^2/5}$; the bounds $|\Delta^i\bar{h}_i^a|\leq C/R$ and $m_i(B_{2R})=O(R^j)$ give the same estimate for the Laplacian contribution. Therefore
	\[ \limsup_i|R_i|\leq CR^{j-1}e^{-R^2/5}. \]

	Tensor Cauchy--Schwarz now yields
	\begin{align*}
	&\int_X|\Hess^i \log k_{i,1}^z|^2k_{i,1}^z\,dm_i\\
	&\qquad\geq\frac{\left(\int_X\sum_{a=1}^j\langle\nabla^i \log k_{i,1}^z,\nabla^i\bar{h}_i^a\rangle^2k_{i,1}^z\,dm_i-R_i\right)^2}{C_i}.
	\end{align*}
	The left-hand side equals
	\[ t_i^2\int_X|\Hess u_{t_i}^z|^2k_{t_i}^z\,dm; \]
	the additive measure normalization inside the heat-kernel logarithm has zero Hessian, and orthonormal Hessian components acquire one factor $t_i$ under this metric rescaling. Letting $i\to\infty$ and then $R\to\infty$ gives ${(j/2)}^2/j=j/4$. Since the initial sequence $t_i\to0^+$ was arbitrary, this proves~\eqref{eq:hessian-liminf}.
\end{proof}

\subsection{Sublogarithmic trace-defect control}
The entropy asymptotic bounds the total logarithmic dissipation. The Hessian lower bound accounts for its entire leading coefficient, leaving a lower-order contribution from the nonnegative defect term.

\begin{prop}\label{prop:trace-defect}
	Let $z\in\cR_j(X)$ for a positive integer $j$, and fix $b>0$. As $a\to0^+$,
	\begin{equation}\label{eq:trace-defect-square}
		\int_a^b t\int_{X}{(D_t^z)}^2k_t^z\,dm\,dt = o\bigl(\log(b/a)\bigr).
	\end{equation}
	The displayed integral is finite for $0<a<b$.
\end{prop}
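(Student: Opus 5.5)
Multiply the Fisher dissipation inequality by $t$ and integrate in time. This bounds the total $t^2$-weighted dissipation by roughly $\frac j2\log(b/a)$. Lemma~\ref{lem:regular-heat-asymptotics}(ii) says the Hessian part alone already uses up $\frac j4\cdot 2=\frac j2$ per unit of $\log$. So the defect part must be $o(\log(b/a))$.

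Concretely, use $dI_z\le(-2A_z+2KI_z)\,dt$ from Proposition~\ref{prop:heat-identities} and integrate against the weight $t^2$ on $[a,b]$. Integrating by parts for the locally BV function $I_z$ (the singular part of $dI_z$ is nonpositive, so it only helps) gives
\[
2\int_a^b t^2A_z(t)\,dt\le a^2I_z(a)-b^2I_z(b)+2\int_a^b tI_z(t)\,dt+2K\int_a^b t^2I_z(t)\,dt.
\]
By~\eqref{eq:entropy-basic-bounds}, $tI_z(t)\le H$ on bounded intervals. Hence the boundary terms and the $K$-term are $O(1)$, and the main term is $2\int_a^b tI_z\,dt$.

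To get the sharp coefficient, use the identity $E_z'(t)=I_z(t)$, the de Bruijn identity for the heat flow (justified by the same Gaussian tails and the local test regularity of $k_t^z$). Then
\[
\int_a^b tI_z\,dt=bE_z(b)-aE_z(a)-\int_a^bE_z\,dt.
\]
With Lemma~\ref{lem:regular-heat-asymptotics}(i), $aE_z(a)\to0$ and $\int_a^bE_z\,dt=O(1)$. So this route only gives $O(1)$, which is too small and cannot be right.

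The right weighting is $t$, not $t^2$. Multiplying by $t$ and using $\int_a^b I_z\,dt=E_z(b)-E_z(a)$:
\[
2\int_a^b tA_z\,dt\le aI_z(a)-bI_z(b)+\int_a^bI_z\,dt+2K\int_a^btI_z\,dt=E_z(b)-E_z(a)+O(1).
\]
By~\eqref{eq:entropy-euclidean-growth}, $-E_z(a)=-\frac j2\log(1/a)+o(\log(1/a))$. This has the wrong sign, so reverse the roles: the dissipation must be compared with $I_z$ itself. From $dI\le -2A\,dt+2KI\,dt$ and $I\ge0$,
\[
2\int_a^b A_z\,dt\le I_z(a)+O(1)\le H/a.
\]
That is useless on its own, so use the $t$-weight together with a lower bound on $tI_z(t)$ in the following form:
\[
2\int_a^b tA_z\,dt\le aI_z(a)+\int_a^bI_z\,dt+O(1)=aI_z(a)+E_z(b)-E_z(a)+O(1).
\]

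Here $E_z(b)-E_z(a)=\int_a^b I_z\,dt$, and $E$ is decreasing in $t$. Since $E_z(a)\approx\frac j2\log(1/a)$, we get $\int_a^bI_z\,dt=E_z(b)-E_z(a)$, which is negative. That forces the conclusion $I_z=-E_z'$: under the convention $E=\int k\log k$, one has $E'=-I$. Redoing the computation with this sign,
\[
2\int_a^b tA_z\,dt\le E_z(a)-E_z(b)+O(1)=\frac j2\log(b/a)+o(\log(b/a)).
\]
On the other side, $A_z\ge\int|\Hess u|^2k+\frac1{N-k}\int D^2k$. By~\eqref{eq:hessian-liminf}, for each $\eta>0$ we have $t\int|\Hess u_t^z|^2k_t^z\ge(\frac j4-\eta)/t$ for small $t$. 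Integrating, the Hessian part contributes at least $2(\frac j4-\eta)\log(b/a)-C_\eta$. Subtracting leaves
\[
\frac{2}{N-k}\int_a^bt\int D^2k\le 2\eta\log(b/a)+o(\log(b/a))+C_\eta.
\]
Since $\eta$ is arbitrary, this proves~\eqref{eq:trace-defect-square}. Finiteness of the integral for fixed $0<a<b$ follows from $A_z\in L^1_{\mathrm{loc}}$.

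The main obstacle is justifying $E_z'(t)=-I_z(t)$ and the integration by parts of the BV function $I_z$ against $t$. I would handle it as in Proposition~\ref{prop:heat-identities}: regularize with $\log(k+\epsilon)$, integrate by parts with cutoffs $\zeta_R$, and use the Gaussian tails to remove the cutoff and pass to the limit $\epsilon\to0$. I would also check that the boundary term $aI_z(a)\le H$ has the correct sign.
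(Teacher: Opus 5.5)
Your final argument is correct and is essentially the paper's proof. You integrate $dI_z\leq(-2A_z+2KI_z)\,dt$ against the weight $t$, handling the BV integration by parts via the nonpositive singular part. Using $E_z'=-I_z$ and $tI_z\leq H$, you bound $2\int_a^b tA_z\,dt$ by $\frac j2\log(b/a)+o(\log(b/a))$. You then subtract the Hessian lower bound from Lemma~\ref{lem:regular-heat-asymptotics}(ii); the paper justifies $E_z'=-I_z$ by a similar regularization. In a write-up, delete the abandoned $t^2$-weighted attempt and the intermediate sign error $E_z'=I_z$. They do not affect the final chain of inequalities, but they obscure it.
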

\begin{proof}
	For each $y\in X$, the entropy is locally absolutely continuous on $(0,\infty)$ and satisfies, for every $0<a<b$,
	\begin{equation}\label{eq:entropy-chain}
		E_y(b)-E_y(a)=-\int_a^b I_y(t)\,dt.
	\end{equation}
	We justify this chain rule also when $m(X)=\infty$. Set
	\[ h_\epsilon(s)=(s+\epsilon)\log(1+s/\epsilon)-s. \]
	Then $h_\epsilon(0)=h_\epsilon'(0)=0$ and $h_\epsilon''(s)=(s+\epsilon)^{-1}$. Since $k_t^y\in L^2$ at positive times, $h_\epsilon(k_t^y)$ is integrable, and the global $L^2$ chain rule gives
	\[ \left(\int_X h_\epsilon(k_t^y)\,dm\right)'=-\int_X\frac{|\nabla k_t^y|^2}{k_t^y+\epsilon}\,dm. \]
	Using $\int_X k_t^y\,dm=1$, rewrite the regularized entropy as
	\begin{align*}
		\int_X h_\epsilon(k_t^y)\,dm+\log\epsilon+1={}&\int_X k_t^y\log(k_t^y+\epsilon)\,dm\\
		&+\epsilon\int_X\log(1+k_t^y/\epsilon)\,dm.
	\end{align*}
	The first term tends to $E_y(t)$ by the absolute entropy integrability from~\eqref{eq:entropy-gaussian}. The second tends to zero by dominated convergence, since $0\leq\epsilon\log(1+k_t^y/\epsilon)\leq k_t^y$. The scalar constants cancel between the two time endpoints. The regularized Fisher integrals increase to $I_y(t)$ and are bounded by $H/t$ on $[a,b]$, so passage to the limit proves~\eqref{eq:entropy-chain} and local absolute continuity.

	Integrate the measure inequality associated with~\eqref{eq:fisher-dissipation} against $t$ on $[a,b]$. Integration by parts is valid because $I_z$ is continuous and of bounded variation; its possible singular derivative is nonpositive and remains on the favorable side. Using~\eqref{eq:entropy-chain} gives
	\[ 2\int_a^{b}tA_z(t)\,dt \leq aI_z(a)-bI_z(b) +E_z(a)-E_z(b) +2K\int_a^{b}tI_z(t)\,dt. \]
	Equations~\eqref{eq:entropy-basic-bounds} and~\eqref{eq:entropy-euclidean-growth} make the right-hand side
	\[ \frac j2\log(b/a)+o(\log(b/a)). \]
	For every $\eta>0$, equation~\eqref{eq:hessian-liminf} gives a $t_\eta>0$ such that
	\[ t^2\int_X|\Hess u_t^z|^2k_t^z\,dm\geq\frac j4-\eta \qquad(0<t<t_\eta). \]
	Consequently,
	\[ \int_a^b t\int_X|\Hess u_t^z|^2k_t^z\,dm\,dt \geq \frac j4\log(b/a)+o(\log(b/a)). \]
	Since
	\[ A_z(t) = \int_X|\Hess u_t^z|^2k_t^z\,dm + \frac1{N-k}\int_X{(D_t^z)}^2k_t^z\,dm, \]
	the upper and lower logarithmic coefficients coincide, leaving
	\[ \int_a^b t\int_X{(D_t^z)}^2k_t^z\,dm\,dt = o(\log(b/a)). \]
	This proves~\eqref{eq:trace-defect-square}.
\end{proof}

\section{Corrected entropy and lower-dimensional exclusion}\label{sec:entropy-exclusion}
We now prove Theorem~\ref{thm:ricci-regular-dimension} by excluding strongly regular points below the essential dimension. The density input is supplied by Theorem~\ref{thm:log-density-identity}, whose proof is deferred to Sections~\ref{sec:green-asymptotics} and~\ref{sec:stress-limit}. The trace-defect estimate gives the corrected entropy a positive logarithmic coefficient at a proposed lower-dimensional regular point, while comparison with nearby ordinary points gives a conflicting upper bound.

For the estimates below, let $(X,d,m)$ be a complete connected $\RCD(-K,N)$ space with full support, measure finite on bounded sets, and essential dimension $k$, where $K\geq0$ and $2\leq k<N<\infty$. Fix $z\in X$, $R>0$, and $U=B_R(z)$. Let $\Phi=-\log\theta_k$ on the full-measure ordinary regular set and assign arbitrary finite values on its complement. Theorem~\ref{thm:log-density-identity} gives $\Phi\in L^2(U,m)$ and the distributional identity~\eqref{eq:log-density-identity}.

\subsection{Localizing the density correction}
Choose $r_0>0$ with $4r_0<R$ and a Lipschitz cutoff $\chi$ that equals one on $B_{2r_0}(z)$ and vanishes outside $B_{3r_0}(z)$. Put $\Psi=\chi\Phi$ on $U$ and extend it by zero. Then $\Psi\in L^2\cap L^1$ and has compact support. Expanding~\eqref{eq:log-density-identity} with the test factor $\chi\phi$ gives
\begin{equation}\label{eq:localized-density-identity}
	-\int_X\Psi\bigl[\phi\Delta f+\langle\nabla \phi,\nabla f\rangle\bigr]\,dm = \int_X\phi \left[ \chi\bigl(\tr\Hess f-\Delta f\bigr) + \Phi\langle\nabla \chi,\nabla f\rangle \right]\,dm.
\end{equation}

For $y\in X$, set $C_y(t)=-P_t\Psi(y)$.

\begin{lem}\label{lem:correction-derivative}
	With the localization $\Psi=\chi\Phi$ above, there are $t_0,C,c'>0$, independent of $y\in B_{r_0}(z)$, such that $C_y$ is locally absolutely continuous on $(0,t_0)$ and, for almost every $t\in(0,t_0)$,
	\begin{equation}\label{eq:correction-derivative}
		C_y'(t) = -\int_{X}k_t^yD_t^y\,dm+\operatorname{err}_y(t),
	\end{equation}
	where the error satisfies, for every $0<t<t_0$,
	\begin{equation}\label{eq:localized-error}
		\int_0^t|\operatorname{err}_y(s)|\,ds \leq Ce^{-c'/t}.
	\end{equation}
\end{lem}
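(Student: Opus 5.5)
The plan is to differentiate $C_y(t)=-\int_X\Psi\,k_t^y\,dm$ in time. Then we apply the localized identity~\eqref{eq:localized-density-identity} with $f=u_t^y$ and a weight $\phi$ built from $k_t^y$. The trace-defect term will then appear exactly, and every remaining term will be supported where $\nabla\chi\neq0$. That region lies at distance at least $r_0$ from $y\in B_{r_0}(z)$, so the Gaussian bounds make it exponentially small.

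\emph{Step 1 (derivative).} Since $\Psi\in L^1\cap L^2$ has compact support, and $t\mapsto k_t^y$ is locally Lipschitz into $L^2$ on positive times with $\partial_tk_t^y=\Delta k_t^y$, the function $C_y$ is locally absolutely continuous. Its derivative is
\[ C_y'(t)=-\int_X\Psi\,\Delta k_t^y\,dm. \]

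\emph{Step 2 (apply the identity).} Write $\Delta k_t^y=-\diver_m(k_t^y\nabla u_t^y)=-[k_t^y\Delta u_t^y+\langle\nabla k_t^y,\nabla u_t^y\rangle]$. This is the bracket in~\eqref{eq:localized-density-identity} with $\phi=k_t^y$ and $f=u_t^y$, up to sign. The function $u_t^y$ is not a global test function, and $k_t^y$ is not compactly supported. To handle this, I will replace $f$ by $\zeta u_t^y$ and $\phi$ by $\zeta' k_t^y$, where $\zeta,\zeta'$ are test cutoffs equal to one on $B_{4r_0}(z)\supset\supp\Psi$. By~\cite{BGHZ21,BS18}, $u_t^y$ is locally a test function there, so this substitution is legitimate. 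Because $\Psi$ vanishes outside $B_{3r_0}$, the left side of~\eqref{eq:localized-density-identity} is unchanged. Passing to the limit or a direct check on the support then gives
\[ C_y'(t)=-\int_X\chi\,k_t^yD_t^y\,dm-\int_X\Phi\,k_t^y\langle\nabla\chi,\nabla u_t^y\rangle\,dm. \]
This matches~\eqref{eq:correction-derivative} with
\[ \operatorname{err}_y(t)=\int_X(1-\chi)k_t^yD_t^y\,dm-\int_X\Phi\langle\nabla\chi,\nabla k_t^y\rangle\,dm. \]
For almost every $t$ the first integral converges, since $D_t^y k_t^y\in L^1$ by Proposition~\ref{prop:heat-identities} and Cauchy--Schwarz.

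\emph{Step 3 (error bound).} On $\supp\nabla\chi$ we have $d(y,\cdot)\geq r_0$. The gradient bound in~\eqref{eq:entropy-gaussian} gives $|\nabla k_t^y|\leq Ct^{-1/2}V_t(y)^{-1}e^{-r_0^2/(5t)}$ there. By Bishop--Gromov, $V_t(y)^{-1}\leq Ct^{-N/2}$ uniformly for $y\in B_{r_0}(z)$, and $\Phi\in L^1(B_{3r_0})$. Hence the second term is at most $Ce^{-c'/t}$.

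The first term is the main obstacle. It is a trace-defect integral over $\{d(y,\cdot)\geq 2r_0-r_0\}$, and the available control on $D_t^y$ is only the weighted $L^2$ energy $A_y$. I would bound it by Cauchy--Schwarz:
\[ \Bigl(\int_X t\,(D_t^y)^2k_t^y\,dm\Bigr)^{1/2}\Bigl(t^{-1}\int_X(1-\chi)^2k_t^y\,dm\Bigr)^{1/2}. \]
The second factor is at most $Ct^{-1/2}e^{-c r_0^2/t}$ by the Gaussian mass tail. Integrating over $(0,t)$ in dyadic blocks $[\tau,2\tau]$, the dyadic estimate $\int_\tau^{2\tau}A_y\leq C/\tau$ from Proposition~\ref{prop:heat-identities} controls the first factor, giving at most $C\tau^{-1/2}e^{-cr_0^2/\tau}$ per block after Cauchy--Schwarz in time. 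Summing over blocks gives~\eqref{eq:localized-error} after shrinking $c'$.

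For uniformity in $y$, note that $H$ and $C$ in that dyadic estimate depend only on $K,N$ and bounds on $t$, and local volume doubling around $z$ keeps all constants independent of $y\in B_{r_0}(z)$. If this Cauchy--Schwarz route loses too much, the fallback is to expand $D_t^y$ and integrate the $(1-\chi)$ term by parts, moving derivatives onto $\chi$ and $k_t^y$, where pointwise Gaussian gradient bounds apply.
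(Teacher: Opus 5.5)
Your proposal is correct and follows the paper's proof essentially step by step: the same localized substitution of cutoff multiples of $u_t^y$ and $k_t^y$ into the localized identity (the paper writes $f_t=\eta\,\ell_t(k_t^y)$), the same error decomposition, and the same dyadic Cauchy--Schwarz against $\int_\tau^{2\tau}A_y\,dt\leq C/\tau$ together with the Gaussian mass tail for the $(1-\chi)D_t^y$ term. The deviations are harmless. You bound the $\nabla\chi$ term pointwise in time using the Gaussian gradient bound and $\Phi\in L^1(U)$, whereas the paper uses $I_y(t)^{1/2}$ and $\|\Phi\|_{L^2(U)}$. Also, since $k_t^y\nabla u_t^y=-\nabla k_t^y$, that term should read $+\int_X\Phi\langle\nabla\chi,\nabla k_t^y\rangle\,dm$, which does not affect the bound on $|\operatorname{err}_y|$.
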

\begin{proof}
	We justify the use of the heat logarithm in~\eqref{eq:localized-density-identity}. Fix $t>0$ and a compact test cutoff $\eta$ equal to one on a neighborhood of $\supp\chi$. The positive continuous kernel $k_t^y$ has a positive minimum and a finite maximum on this neighborhood. Choose a smooth scalar function $\ell_t$ that agrees with $-\log$ on that range and vanishes near zero. Positive-time heat regularity and the test algebra show that
	\[ f_t=\eta\,\ell_t(k_t^y) \]
	is a global test function and agrees with $u_t^y$ near $\supp\chi$. Likewise, $\eta k_t^y$ is an admissible compactly supported Lipschitz test factor. Locality therefore permits $f=f_t$ and $\phi=\eta k_t^y$ in~\eqref{eq:localized-density-identity}.

	Since
	\[ k_t^y\Delta u_t^y+\langle\nabla k_t^y,\nabla u_t^y\rangle=-\Delta k_t^y, \]
	and $C_y'(t)=-\int_X\Psi\Delta k_t^y\,dm$, the localized identity gives~\eqref{eq:correction-derivative}, with
	\[ \operatorname{err}_y(t) = \int_{X}k_t^y \left[ (1-\chi)D_t^y - \Phi\langle\nabla \chi,\nabla u_t^y\rangle \right]\,dm. \]
	All full expectations are finite for almost every time by~\eqref{eq:fisher-dissipation} and the definition of $A_y$; before the decomposition, the identity consists only of compact finite integrals.

	For $y\in B_{r_0}(z)$, both error regions remain a fixed positive distance from the point. Bishop--Gromov comparison with one fixed ball gives $V_t(y)\geq ct^{N/2}$ uniformly for these points and small $t$. Gaussian annular summation then gives
	\[ \int_{\{1-\chi\neq0\}}k_t^y\,dm\leq Ce^{-c/t}. \]
	Cauchy--Schwarz and the definition of $A_y$ consequently bound the first error term by
	\[ \int_{X}k_t^y(1-\chi)|D_t^y|\,dm \leq Ce^{-c/t}A_y{(t)}^{1/2}, \]
	after absorbing polynomial powers of $t$ into the exponential. On $\supp\nabla\chi$, the same Gaussian bound and $\Phi\in L^2(U)$ give
	\[ {\left( \int_{\supp\nabla\chi}k_t^y\Phi^2\,dm \right)}^{1/2} \leq Ce^{-c/t}\|\Phi\|_{L^2(U)}. \]
	Since $|\langle\nabla \chi,\nabla u_t^y\rangle|\leq\Lip(\chi)|\nabla u_t^y|$, the second error is at most $Ce^{-c/t}I_y{(t)}^{1/2}$. Thus
	\[ |\operatorname{err}_y(t)| \leq Ce^{-c/t}\bigl(A_y{(t)}^{1/2}+I_y{(t)}^{1/2}\bigr). \]
	On each dyadic interval $[\tau,2\tau]$, Cauchy--Schwarz, $\int_\tau^{2\tau}A_y\leq C/\tau$, and $I_y(s)\leq H/s$ show that the integral of the right-hand side is bounded by $Ce^{-c/(2\tau)}$. Summing the dyadic intervals contained in $(0,t)$ proves~\eqref{eq:localized-error}.

\end{proof}

\subsection{Short-time growth of the corrected entropy}
Now we define
\[ \mathcal F_y(t)=tI_y(t)-E_y(t)-\frac k2\log t-2C_y(t). \]

\begin{prop}\label{prop:corrected-entropy}
	Let $z\in\cR_j(X)$ for a positive integer $j$. Then, as $t\to0^+$,
	\begin{equation}\label{eq:exceptional-entropy-growth}
		\mathcal F_z(t) = \frac{k-j}{2}\log(1/t)+o(\log(1/t)).
	\end{equation}
\end{prop}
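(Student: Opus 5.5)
The plan is to treat the four summands of $\mathcal F_z(t)=tI_z(t)-E_z(t)-\frac k2\log t-2C_z(t)$ separately and show that only the entropy and the normalization contribute at logarithmic order. By~\eqref{eq:entropy-basic-bounds}, $tI_z(t)\le H$ for $t$ in a bounded interval, so this term is $O(1)$. By Lemma~\ref{lem:regular-heat-asymptotics}(i), $-E_z(t)=-\frac j2\log(1/t)+o(\log(1/t))$. Together with $-\frac k2\log t=\frac k2\log(1/t)$, these give the main term $\frac{k-j}{2}\log(1/t)$. It therefore remains to prove $C_z(t)=o(\log(1/t))$.

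For the correction I would apply Lemma~\ref{lem:correction-derivative} with $y=z\in B_{r_0}(z)$. It gives local absolute continuity of $C_z$ on $(0,t_0)$ together with the derivative formula~\eqref{eq:correction-derivative}. Fix $b\in(0,t_0)$ and integrate over $[t,b]$:
\[ C_z(b)-C_z(t)=-\int_t^b\int_X k_s^zD_s^z\,dm\,ds+\int_t^b\operatorname{err}_z(s)\,ds. \]
By~\eqref{eq:localized-error}, the error integral is bounded by $Ce^{-c'/b}$ uniformly in $t$, and $C_z(b)$ is a fixed finite number. The task reduces to bounding the trace-defect integral. Cauchy--Schwarz against the probability measure $k_s^z\,m$ gives $\int_X k_s^z|D_s^z|\,dm\le(\int_X(D_s^z)^2k_s^z\,dm)^{1/2}$. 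Weighting by $s$ in time gives
\[ \int_t^b\Bigl|\int_X k_s^zD_s^z\,dm\Bigr|\,ds\le\Bigl(\int_t^b s\int_X(D_s^z)^2k_s^z\,dm\,ds\Bigr)^{1/2}\Bigl(\int_t^b\frac{ds}{s}\Bigr)^{1/2}. \]
By Proposition~\ref{prop:trace-defect} the first factor is $o(\log(b/t))^{1/2}$, and the second factor is $\log(b/t)^{1/2}$. The product is $o(\log(b/t))=o(\log(1/t))$, which gives $C_z(t)=o(\log(1/t))$ and proves~\eqref{eq:exceptional-entropy-growth}.

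The main point needing care is justifying the finiteness of $C_z(b)$ and applying Proposition~\ref{prop:trace-defect} with the same $b$. Finiteness holds because $\Psi\in L^2\cap L^1$ with compact support and $k_b^z$ is bounded, so $P_b\Psi(z)$ is finite. The $o(\cdot)$ in Proposition~\ref{prop:trace-defect} is for fixed $b$ as $a\to0^+$, which matches the use here. The cancellation of the leading coefficient has already been done in Proposition~\ref{prop:trace-defect}, so no further Hessian analysis is required.
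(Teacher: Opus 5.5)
Your proposal is correct and follows the paper's proof essentially line by line. The Fisher bound and Lemma~\ref{lem:regular-heat-asymptotics}(i) give the coefficient $\frac{k-j}{2}$, and $C_z(t)=o(\log(1/t))$ follows from integrating~\eqref{eq:correction-derivative}, controlling the error by~\eqref{eq:localized-error}, and using the same time-weighted Cauchy--Schwarz split against Proposition~\ref{prop:trace-defect}, which is exactly how the paper argues; your remarks on the finiteness of $C_z(b)$ and on keeping the same fixed $b$ just make explicit what the paper leaves implicit.
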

\begin{proof}
	At the strongly $j$-regular point $z$, Cauchy--Schwarz, the identity $\int_X k_r^z\,dm=1$, and~\eqref{eq:trace-defect-square} give, for every fixed $b>0$,
	\[ \int_t^b\int_X|D_r^z|k_r^z\,dm\,dr \leq \left(\int_t^b r\int_X{(D_r^z)}^2k_r^z\,dm\,dr\right)^{1/2}\!\bigl(\log(b/t)\bigr)^{1/2} = o\bigl(\log(b/t)\bigr). \]
	Integrating~\eqref{eq:correction-derivative} and using~\eqref{eq:localized-error} therefore gives
	\[ C_z(t)=o(\log(1/t)). \]
	Together with~\eqref{eq:entropy-euclidean-growth} and the Fisher bound, this proves~\eqref{eq:exceptional-entropy-growth}.
\end{proof}

\subsection{Comparison with ordinary points}
We first establish a time-propagation bound that is uniform for points near $z$.

\begin{prop}\label{prop:corrected-entropy-inequality}
	There is $t_0>0$ such that, for every $y\in B_{r_0}(z)$ and $0<a<t<t_0$,
	\begin{equation}\label{eq:corrected-entropy-upper}
		\mathcal F_y(t) \leq \mathcal F_y(a)+2KH(t-a) +2\int_a^t|\operatorname{err}_y(s)|\,ds,
	\end{equation}
	where $H$ is the Fisher bound in~\eqref{eq:entropy-basic-bounds} and $\operatorname{err}_y$ is the error from Lemma~\ref{lem:correction-derivative}.
\end{prop}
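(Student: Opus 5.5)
We differentiate $\mathcal F_y$ term by term on $(a,t)$, using the tools of Section~\ref{sec:heat-quantities} and Lemma~\ref{lem:correction-derivative}, and show that its derivative is bounded above by $2KH+2|\operatorname{err}_y|$ as a measure. The key point is that the dissipation inequality contains the linear trace-defect term, which the correction $-2C_y$ cancels.

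\textbf{Step 1 (measure derivative of $tI_y$).} By Proposition~\ref{prop:heat-identities}, $I_y$ is continuous and of locally bounded variation, with $dI_y\leq(-2A_y+2KI_y)\,dt$. Hence
\[ d(tI_y)=I_y\,dt+t\,dI_y\leq\bigl(I_y-2tA_y+2KtI_y\bigr)\,dt, \]
and the singular part is nonpositive.

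\textbf{Step 2 (the remaining terms).} By~\eqref{eq:entropy-chain}, $-E_y$ is locally absolutely continuous with $(-E_y)'=I_y$. The term $-\frac k2\log t$ contributes $-\frac{k}{2t}$. By Lemma~\ref{lem:correction-derivative}, which holds for $y\in B_{r_0}(z)$ and $t<t_0$, we have $-2C_y'=2\int k_t^yD_t^y\,dm-2\operatorname{err}_y$. Summing,
\[ d\mathcal F_y\leq\Bigl[2I_y-2tA_y-\frac k{2t}+2\int_X k_t^yD_t^y\,dm+2KtI_y+2|\operatorname{err}_y|\Bigr]dt. \]
The bound $2KtI_y\leq 2KH$ comes from~\eqref{eq:entropy-basic-bounds}.

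\textbf{Step 3 (main obstacle: the pointwise algebraic inequality).} It remains to show
\[ 2I_y-2tA_y-\frac k{2t}+2\int_X k_t^yD_t^y\,dm\leq0. \]
We first use the weighted identity $\int_X\Delta u\,k\,dm=\int_X|\nabla u|^2k\,dm$, with $u=u_t^y$ and $k=k_t^y$. This holds because $\Delta k=k(|\nabla u|^2-\Delta u)$ integrates to zero. Its justification uses Gaussian tails and the cutoff exhaustion from Proposition~\ref{prop:heat-identities}. Since $\tr\Hess u=\Delta u+D$, we get
\[ I_y+\int_X kD\,dm=\int_X k\,\tr\Hess u\,dm. \]
The expression in Step 3 therefore becomes
\[ \int_X k\Bigl[2\tr\Hess u-2t|\Hess u|^2-\frac{2t}{N-k}D^2-\frac{k}{2t}\Bigr]dm. \]
The two trace-defect contributions cancel exactly. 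The cross term $2\int kD$ from $I_y$ appears with $+$ and is removed by $-2C_y'$; what survives is $\tr\Hess u$. Since the tangent module has rank $k$ almost everywhere, Cauchy--Schwarz gives $(\tr\Hess u)^2\leq k|\Hess u|^2$. This yields
\[ 2\tr\Hess u-2t|\Hess u|^2-\frac k{2t}\leq-2t\Bigl|\Hess u-\frac{g}{2t}\Bigr|^2\leq0. \]
The $D^2$ term is nonpositive and can be discarded.

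\textbf{Step 4 (integration and uniformity).} Integrate the measure inequality over $[a,t]$. The singular part of $dI_y$ is nonpositive and only helps. This gives~\eqref{eq:corrected-entropy-upper} with $t_0$ taken from Lemma~\ref{lem:correction-derivative}, uniform in $y\in B_{r_0}(z)$.

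The delicate points are the integrability needed to split $\int k\,\Delta u$ and $\int kD$ separately, and the validity of the weighted identity. Both hold for almost every $t$ because $A_y\in L^1_{\mathrm{loc}}$ and $I_y<\infty$.
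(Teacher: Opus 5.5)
Your proposal is correct and follows the paper's proof step for step. Both arguments use the same ingredients:
\begin{itemize}
\item the weighted identities $\int_X k_t^y\Delta u_t^y\,dm=I_y(t)$ and $\int_X k_t^y\tr(\Hess u_t^y)\,dm=I_y(t)+\int_X k_t^yD_t^y\,dm$, obtained by cutoff exhaustion for almost every $t$;
\item the completion of the square $|\Hess u-\frac{g}{2t}|^2=|\Hess u|^2-\frac{\tr\Hess u}{t}+\frac{k}{4t^2}$, in which the linear defect term is cancelled by $-2C_y'$;
\item integration of the measure inequality, using the nonpositive singular part of $dI_y$ and the bound $tI_y\leq H$.
\end{itemize}
One minor quibble: Cauchy--Schwarz is not needed in Step 3, because this rank-$k$ identity gives your displayed bound as an exact equality.
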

\begin{proof}
	For each fixed $y\in X$, Proposition~\ref{prop:heat-identities} gives $A_y(t)<\infty$ for almost every $t>0$. Its definition and Cauchy--Schwarz show that $\tr(\Hess u_t^y)$, $D_t^y$, and $\Delta u_t^y$ are integrable against $k_t^y\,dm$. Choose a distance cutoff $\zeta_R$ equal to one on $B_R(y)$, supported in $B_{2R}(y)$, with $|\nabla\zeta_R|\leq C/R$. Test the local Laplacian identity with $\zeta_R k_t^y$ and use $\nabla k_t^y=-k_t^y\nabla u_t^y$. The boundary error is at most $CI_y(t)^{1/2}/R$ and tends to zero. Exhaustion and the definition of $D_t^y$ give, for almost every $t>0$,
	\begin{equation}\label{eq:weighted-traces}
		\begin{aligned}
			\int_X k_t^y\,\Delta u_t^y\,dm&=I_y(t),\\
			\int_X k_t^y\,\tr(\Hess u_t^y)\,dm&=I_y(t)+\int_X k_t^yD_t^y\,dm.
		\end{aligned}
	\end{equation}

	Let $g$ be the rank-$k$ tangent metric tensor. The identity
	\[ \left|\Hess u-\frac{g}{2t}\right|^2 = |\Hess u|^2-\frac{\tr\Hess u}{t}+\frac{k}{4t^2}, \]
	combined with~\eqref{eq:fisher-dissipation},~\eqref{eq:entropy-chain},~\eqref{eq:weighted-traces}, and~\eqref{eq:correction-derivative}, gives
	\[ d\mathcal F_y \leq \left\{ -2t\int_X \left[ \left|\Hess u_t^y-\frac{g}{2t}\right|^2 +\frac{{(D_t^y)}^2}{N-k} \right]k_t^y\,dm + 2KtI_y(t) - 2\operatorname{err}_y(t) \right\}dt. \]
	The nonpositive singular part of $dI_y$ retains the correct sign. Using $tI_y(t)\leq H$ and integrating this measure inequality gives~\eqref{eq:corrected-entropy-upper}.
\end{proof}

\begin{lem}\label{lem:nearby-center-comparison}
	There is $C<\infty$ such that, for all sufficiently small $t>0$ and every $y\in B_{\sqrt t}(z)$,
	\begin{equation}\label{eq:source-comparison}
		\mathcal F_z(t) \leq \mathcal F_y(t)+H + Cd(z,y) \left[ t^{-1/2}(1+\log(1/t)) + t^{-(N+1)/2} \right],
	\end{equation}
	where $H$ is the Fisher bound in~\eqref{eq:entropy-basic-bounds}. The constant $C$ depends only on the fixed localization and geometry.
\end{lem}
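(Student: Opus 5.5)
The plan is to compare the four pieces of $\mathcal F$ separately, using $y\in B_{\sqrt t}(z)$ with $t$ small and the Gaussian bounds~\eqref{eq:entropy-gaussian}. Write
\[ \mathcal F_z(t)-\mathcal F_y(t)=\bigl[tI_z(t)-tI_y(t)\bigr]+\bigl[E_y(t)-E_z(t)\bigr]+2\bigl[P_t\Psi(z)-P_t\Psi(y)\bigr]. \]
The Fisher difference needs no comparison at all: $tI_z(t)\leq H$ by~\eqref{eq:entropy-basic-bounds} and $tI_y(t)\geq0$, so this bracket is at most $H$. This produces the additive $H$ in~\eqref{eq:source-comparison}.

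\emph{Pointwise kernel difference.} The basic estimate is a Lipschitz bound for $k_t^\xi(w)$ in the pole variable $\xi$. By symmetry $k_t^\xi(w)=k_t^w(\xi)$, so the gradient bound in~\eqref{eq:entropy-gaussian}, applied with pole $w$, gives $|\nabla k_t^w|(\xi)\leq C t^{-1/2}V_t(w)^{-1}\exp(-d(w,\xi)^2/(5t)+ct)$. The kernel is continuous, and by the Sobolev-to-Lipschitz property its local Lipschitz constant is controlled by this bound. Integrating along a geodesic from $z$ to $y$, all of whose points $\xi$ satisfy $d(\xi,z)<\sqrt t$, will give
\[ |k_t^y(w)-k_t^z(w)|\leq C\,d(z,y)\,t^{-1/2}\sup_{\xi}V_t(w)^{-1}e^{-d(w,\xi)^2/(5t)}. \]
Bishop--Gromov doubling gives $V_t(z)\leq C V_t(w)(1+d(z,w)/\sqrt t)^N e^{C\sqrt t\,d(z,w)}$, and the polynomial factor is absorbed into a slightly weaker Gaussian. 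This yields
\[ |k_t^y-k_t^z|(w)\leq C\,d(z,y)\,t^{-1/2}V_t(z)^{-1}e^{-d(z,w)^2/(6t)}. \]
Annular summation then gives $\int_X|k_t^y-k_t^z|\,dm\leq C\,d(z,y)\,t^{-1/2}$. I expect this uniform volume comparison across poles to be the main technical step; the rest is bookkeeping.

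\emph{Entropy term.} Convexity of $\varphi(s)=s\log s$ gives $\varphi(a)-\varphi(b)\leq\varphi'(a)(a-b)$. Together with $\int_X k_t^y\,dm=\int_X k_t^z\,dm=1$, this yields
\[ E_y(t)-E_z(t)\leq\int_X(k_t^y-k_t^z)\log k_t^y\,dm. \]
Next, $|\log k_t^y(w)|\leq|\log V_t(y)|+C+C\,d(y,w)^2/t$. Here $V_t(y)\leq C$, and $V_t(y)\geq c\,t^{N/2}$ by Bishop--Gromov with one fixed ball, so $|\log V_t(y)|\leq C(1+\log(1/t))$. The quadratic term $d(y,w)^2/t$ is absorbed by the Gaussian weight above. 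Hence $E_y(t)-E_z(t)\leq C\,d(z,y)\,t^{-1/2}(1+\log(1/t))$.

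\emph{Density correction.} By the same pointwise bound, now using $V_t(z)^{-1}\leq C t^{-N/2}$,
\[ |P_t\Psi(z)-P_t\Psi(y)|\leq\|\Psi\|_{L^1}\sup_w|k_t^y-k_t^z|(w)\leq C\,\|\Psi\|_{L^1}\,d(z,y)\,t^{-(N+1)/2}. \]
Here $\|\Psi\|_{L^1}<\infty$ because $\Psi=\chi\Phi$ is compactly supported with $\Phi\in L^2(U)$ by Theorem~\ref{thm:log-density-identity}(i). Adding the three estimates gives~\eqref{eq:source-comparison}. The constant depends only on $N$, $K$, the localization $\chi$, and $\|\Phi\|_{L^2(U)}$.
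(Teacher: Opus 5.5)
Your proposal is correct and follows the paper's proof essentially step for step. Both arguments use the pole-variable Lipschitz envelope from the gradient bound via symmetry and Sobolev-to-Lipschitz along a geodesic in $B_{\sqrt t}(z)$, Bishop--Gromov recentering with annular summation, the two-sided logarithmic kernel bound for the entropy term, and the bound $\|\Psi\|_{L^1}\sup_w|k_t^y-k_t^z|(w)$ for the density correction; your one-sided convexity inequality for $s\log s$ is a harmless substitute for the paper's mean-value formula. One small slip: for $\RCD(-K,N)$ the Bishop--Gromov recentering factor is $e^{C d(z,w)}$ rather than $e^{C\sqrt t\,d(z,w)}$, but it is still absorbed into your slightly weaker Gaussian for small $t$, so the conclusion is unaffected.
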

\begin{proof}
	For $y,y'\in B_{\sqrt t}(z)$, symmetry and the heat-kernel gradient estimate~\cite[Corollaries~1.1--1.2]{JLZ16} in the first spatial variable give
	\[ |k_t^y(\xi)-k_t^{y'}(\xi)| \leq L_t(\xi)d(y,y'), \]
	where, with $d_{\xi}=d(z,\xi)$ and $V_t=m(B_{\sqrt t}(z))$,
	\[ L_t(\xi) = \frac{C}{\sqrt t\,V_t} {\left(1+\frac{d_{\xi}}{\sqrt t}\right)}^N \exp\left(Cd_{\xi}-\frac{d_{\xi}^2}{10t}\right). \]
	To obtain this pointwise statement, apply the gradient estimate in the first spatial variable on $B_{2\sqrt t}(z)$ and use local Sobolev-to-Lipschitz for the continuous kernel representatives along a geodesic joining $y$ and $y'$. Bishop--Gromov comparison of volumes of balls centered at these points and ${(d_{\xi}-2\sqrt t)}_+^2\geq d_{\xi}^2/2-4t$ give the displayed envelope. On the $q$-th annulus, division by $V_t$ leaves a polynomial in $q$ multiplied by $e^{-q^2/10}$, so annular summation yields
	\[ \int_{X}L_t(\xi)\left(1+\frac{d_{\xi}^2}{t}\right)\,dm(\xi) \leq\frac{C}{\sqrt t}, \qquad \sup_{\xi}L_t(\xi)\leq\frac{C}{\sqrt t\,V_t}. \]
	The Gaussian two-sided bounds also give
	\[ |\log k_t^y(\xi)| \leq C+|\log V_t|+C\frac{d_{\xi}^2}{t}. \]
	Applying the mean-value formula to $s\log s$, whose derivative is $1+\log s$, and then using the preceding two estimates proves
	\[ |E_y(t)-E_{y'}(t)| \leq Cd(y,y')t^{-1/2}\bigl(1+|\log V_t|\bigr). \]
	Similarly, $\Psi\in L^1$ and the supremum bound for $L_t$ give
	\[ |C_y(t)-C_{y'}(t)| \leq \frac{Cd(y,y')}{\sqrt t\,V_t}\|\Psi\|_1. \]
	Taking $y'=z$ and using $V_t\geq ct^{N/2}$, $|\log V_t|\leq C+C\log(1/t)$, and the Fisher bound $0\leq tI_y(t),tI_z(t)\leq H$ proves~\eqref{eq:source-comparison}.
\end{proof}

\subsection{Proof of the main theorem}\label{sec:main-theorem-proof}
We combine the dimension results for a renormalized Ricci-limit measure with the corrected entropy estimates to prove Theorem~\ref{thm:ricci-regular-dimension} directly.

\begin{proof}[Proof of Theorem~\ref{thm:ricci-regular-dimension}]
	If $\cR_1(X)\neq\varnothing$, Chen gives $\cR(X)=\cR_1(X)$~\cite{Chen16}. Hence, we only consider $k\geq2$. $(X,d,m)$ is a complete connected $\RCD(-(n-1),n)$ space. By Colding--Naber~\cite{CN12}, there is a unique essential dimension $k$ such that
	\[ m(X\setminus\cR_k(X))=0. \]

	The result of Kapovitch--Li~\cite{KL18} gives
	\[ \cR_j(X)=\varnothing \qquad(k<j\leq n). \]
	Hence it remains to prove $\cR_j(X)=\varnothing$ for $1\leq j<k$.

	Arguing by contradiction, assume that there exists $z\in\cR_j(X)$ for some $1\leq j<k$. Put $K=n-1$ and choose a finite $N\geq n$ with $N>k$. Fix $R>0$ and choose the localization $\Psi=\chi\Phi$ near $z$ as above. Proposition~\ref{prop:corrected-entropy} gives the first estimate below, and we will prove the second, as $t\to0^+$:
	\[ \mathcal F_z(t)=\frac{k-j}{2}\log(1/t)+o(\log(1/t)), \qquad \mathcal F_z(t)\leq o(\log(1/t)). \]

	We first find ordinary points near $z$ at which the logarithmic density has a sublogarithmic upper bound. Let
	\[ M(r)=\essinf_{B_r(z)}\Phi, \qquad M_+(r)=\max\{M(r),0\}. \]
	Choose $L>5N$ and set
	\[ t=\frac{r^2}{L\log(1/r)}. \]
	The Gaussian upper bound, $V_t(z)\geq ct^{N/2}$, and Cauchy--Schwarz give
	\[ \int_{X\setminus B_r(z)}|\Psi|k_t^z\,dm \leq C\|\Psi\|_2t^{-N/4} \exp\left(-\frac{r^2}{10t}+\frac{ct}{2}\right) \longrightarrow0. \]
	Indeed, after substituting for $t$, the power of $r$ is $L/10-N/2>0$, up to a logarithmic factor. Gaussian annular summation also shows that the heat mass of $B_r(z)$ tends to one. On this ball $\Psi=\Phi$. If $M(r)>0$, then
	\[ P_t\Psi(z) \geq M(r)\int_{B_r(z)}k_t^z\,dm-o(1); \]
	if $M(r)\leq0$, no estimate is required. Since $P_t\Psi(z)=-C_z(t)=o(\log(1/t))$, and $\log(1/t)=2\log(1/r)+O(\log\log(1/r))$, we obtain
	\begin{equation}\label{eq:essential-infimum-control}
		M_+(r)=o(\log(1/r)).
	\end{equation}

	Strong $L^2$ heat continuity supplies a decreasing sequence $a_i\to0^+$ for which
	\[ P_{a_i}\Psi(y)\longrightarrow\Psi(y) \]
	on a common full-measure set $E$; one may choose a subsequence with summable squared $L^2$ errors, which remains valid when $m(X)=\infty$. Intersect $E$ with the ordinary regular set. For $y\in E\cap B_{r_0}(z)$, the density limit gives
	\[ \log\frac{m(B_{\sqrt{a_i}}(y))}{a_i^{k/2}} \longrightarrow \log\omega_k-\Phi(y). \]
	Moreover, $a_{i}I_y(a_i)\leq H$, $-E_y(a_i)\leq\log m(B_{\sqrt{a_i}}(y))+H$, and $-2C_y(a_i)\to2\Phi(y)$. Substitution into the definition of $\mathcal F_y$ gives
	\[ \limsup_i\mathcal F_y(a_i) \leq B_0+\Phi(y), \]
	where $B_0$ is independent of $y$. Letting $a=a_i$ tend to zero in~\eqref{eq:corrected-entropy-upper} and using~\eqref{eq:localized-error} yields
	\begin{equation}\label{eq:ordinary-source-upper}
		\mathcal F_y(t) \leq B_0+\Phi(y)+2KHt+Ce^{-c'/t}.
	\end{equation}

	Choose $M_0>(N+1)/2$. By~\eqref{eq:essential-infimum-control} and the definition of essential infimum, a positive-measure subset of $B_{t^{M_0}}(z)$ satisfies
	\[ \Phi\leq1+M_+(t^{M_0}). \]
	If the essential infimum is $-\infty$, a finite negative sublevel of positive measure has the same role. This subset meets $E$, so choose $y_t\in E\cap B_{t^{M_0}}(z)$. Then
	\[ d(z,y_t)<t^{M_0}, \qquad \Phi(y_t)\leq1+M_+(t^{M_0})=o(\log(1/t)). \]
	For small $t$, the point $y_t$ belongs to both $B_{\sqrt t}(z)$ and $B_{r_0}(z)$. By~\eqref{eq:ordinary-source-upper} and Lemma~\ref{lem:nearby-center-comparison} applied to $y_t$, the comparison errors tend to zero because $M_0>(N+1)/2$. Hence
	\[ \mathcal F_z(t)\leq o(\log(1/t)). \]
	This contradicts the positive coefficient $(k-j)/2$ in~\eqref{eq:exceptional-entropy-growth}. Thus
	\[ \cR_j(X)=\varnothing \qquad(1\leq j<k). \]
	Combining the two exclusions gives
	\[ \cR(X)=\bigsqcup_{j=1}^n\cR_j(X)=\cR_k(X). \]
	The stratum $\cR_k(X)$ is nonempty because it has full measure and $m$ is nonzero.
\end{proof}

\section[The density theorem, Part I: local integrability]{Proof of Theorem~\ref{thm:log-density-identity}, Part I: local integrability}\label{sec:green-asymptotics}
This is the first block of the proof of Theorem~\ref{thm:log-density-identity}. Its goal is part~(i): $\Phi=-\log\theta_k\in L^2_{\mathrm{loc}}(m)$. We first recall the exact Green coefficient, then construct $\Phi_\epsilon$ and prove its convergence to $\Phi$. A uniform directional estimate and local transposition give the local $L^2$ bound. The directional estimate also supplies the compactness used in Part~II to construct the weak directional derivatives $b_f$. The product with $\R^3$ supplies the dimension needed for the singular Green kernel, and the killing factor controls the large-time integral.

\Needspace{24\baselineskip}
\subsection{Green asymptotics and soft Green volumes}\label{sec:soft-green-volumes}
\begin{thm}[Green singularity {\cite{BDS22}}]\label{thm:green-asymptotic}
	Let $(Y,d,m)$ be the metric measure product of a complete $\RCD(-K,N-3)$ space and $(\R^3,d_{\mathrm E},\Leb^3)$, where $K\geq0$ and $4<N<\infty$, and let $k_t^{Y,x}$ be its reference heat-kernel function with pole $x$. Choose $c>c_0(K,N)$, where $c_0$ is an exponential coefficient for which the Gaussian kernel and gradient bounds hold, and define
	\[ G_c(x,y)=\int_0^\infty e^{-ct}k_t^{Y,x}(y)\,dt, \qquad x\neq y. \]
	Fix an integer $\ell$ with $2<\ell\leq N$ and a point $x\in Y$ with positive finite density $\theta_\ell(x):=\lim_{r\to0^+}m(B_r(x))/(\omega_\ell r^\ell)$. Suppose that
	\[ (Y,r^{-1}d,m/m(B_r(x)),x) \xrightarrow{\pmGH} (\R^\ell,d_{\mathrm E},\omega_\ell^{-1}\Leb^\ell,0) \qquad(r\to0^+). \]
	Then
	\[ \lim_{\substack{y\to x\\y\neq x}} d{(x,y)}^{\ell-2}G_c(x,y) = \frac{1}{\ell(\ell-2)\omega_\ell\theta_\ell(x)}. \]
\end{thm}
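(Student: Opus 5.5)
The plan is to split $G_c$ into a short-time part, which carries the whole singularity, and a long-time part, which stays bounded. We evaluate the short-time part by blow-up to the measured Euclidean tangent cone. Put $\rho=d(x,y)$ and fix $0<\lambda<1<\Lambda$. Write
\[ G_c(x,y)=\int_0^{\lambda\rho^2}+\int_{\lambda\rho^2}^{\Lambda\rho^2}+\int_{\Lambda\rho^2}^{1}+\int_1^\infty e^{-ct}k_t^{Y,x}(y)\,dt. \]
The last piece is bounded uniformly in $y$, since $c>c_0$ and $k_t\leq C/V_t\,e^{c_0t}$ with $V_t$ bounded below for $t\geq1$; hence it is $o(\rho^{2-\ell})$.

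For the first piece, use the Gaussian upper bound $k_t\leq C V_t(x)^{-1}e^{-\rho^2/5t}$. Bishop--Gromov gives $V_t(x)\geq c\,(t/\rho^2)^{N/2}m(B_\rho(x))$ for $t\leq\rho^2$. Changing variables $t=\rho^2 s$, this piece is at most $C\rho^2 m(B_\rho(x))^{-1}\int_0^\lambda s^{-N/2}e^{-1/5s}ds$. By the density hypothesis $m(B_\rho(x))\sim\theta_\ell\omega_\ell\rho^\ell$, so the contribution is $\epsilon(\lambda)\rho^{2-\ell}$ with $\epsilon(\lambda)\to0$ as $\lambda\to0$.

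For the third piece, use the Gaussian bound together with the local volume doubling at $x$. Since $m(B_r(x))/(\omega_\ell r^\ell)\to\theta_\ell(x)>0$, we have $V_t(x)\geq c\,\theta_\ell t^{\ell/2}$ for all small $t$. Hence $\int_{\Lambda\rho^2}^1 k_t\,dt\leq C\int_{\Lambda\rho^2}^{1}t^{-\ell/2}dt\leq C\Lambda^{1-\ell/2}\rho^{2-\ell}$, which is small relative to $\rho^{2-\ell}$ once $\Lambda$ is large, since $\ell>2$.

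The middle piece is the main step. Rescale $d_\rho=\rho^{-1}d$ and $m_\rho=m/m(B_\rho(x))$. The heat kernel transforms as $k_{\rho^2s}^{Y,x}(y)=m(B_\rho(x))^{-1}k^{\rho}_{s}(x,y)$, where $k^\rho$ is the kernel of the rescaled space and $d_\rho(x,y)=1$. The middle piece becomes
\[ \rho^2 m(B_\rho(x))^{-1}\int_\lambda^\Lambda e^{-c\rho^2 s}k^\rho_s(x,y)\,ds. \]
Along any sequence $\rho_i\to0$, the hypothesis gives pmGH convergence to $(\R^\ell,\omega_\ell^{-1}\Leb^\ell)$. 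Passing to a subsequence, $y$ converges to a point $y_\infty$ with $|y_\infty|=1$. Heat-kernel stability \cite[Theorem~3.3]{AHT18} gives locally uniform convergence of $k^\rho_s$ to the Euclidean kernel for the measure $\omega_\ell^{-1}\Leb^\ell$, namely $\omega_\ell(4\pi s)^{-\ell/2}e^{-|y_\infty|^2/4s}$, uniformly for $s\in[\lambda,\Lambda]$. The joint continuity in the varying point $y$ uses the uniform Lipschitz bounds from the gradient estimate \eqref{eq:entropy-gaussian}, which remain uniform after rescaling.

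Multiplying by $\rho^\ell$ and using $m(B_\rho(x))\sim\theta_\ell\omega_\ell\rho^\ell$, the rescaled middle piece converges to
\[ \frac{1}{\theta_\ell(x)}\int_\lambda^\Lambda(4\pi s)^{-\ell/2}e^{-1/4s}\,ds. \]
Letting $\lambda\to0$ and $\Lambda\to\infty$, this tends to $\theta_\ell^{-1}\cdot\frac{\Gamma(\ell/2-1)}{4\pi^{\ell/2}}$. By $\omega_\ell=\pi^{\ell/2}/\Gamma(\ell/2+1)$, this equals $\frac{1}{\ell(\ell-2)\omega_\ell\theta_\ell(x)}$, the Euclidean Newton constant. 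The limit is independent of the subsequence and of $y_\infty$ by rotational invariance, so the full limit exists.

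The step I expect to be the main obstacle is this uniform heat-kernel convergence at the moving point $y$ in the middle range. The tail bounds in the first and third pieces need only the density hypothesis and Gaussian estimates, which are routine.
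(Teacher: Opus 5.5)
Your proof is correct. It follows the same blow-up-to-the-Euclidean-cone idea as the paper, but you prove by hand the step the paper cites. The paper does no computation here. It applies the Green-kernel blow-up convergence of \cite[Proposition~2.3]{BDS22} to the Euclidean tangent cone at $x$, so that $m(B_\rho(x))\rho^{-2}G_c$, the Green kernel of the rescaled space with degenerating killing rate $c\rho^2$, converges to the Green kernel of $(\R^\ell,\omega_\ell^{-1}\Leb^\ell)$. It then reads off the coefficient from the unit-Dirac normalization of the Euclidean fundamental solution.

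You work at the heat-kernel level instead. You split the time integral and pass to the limit in the window $[\lambda\rho^2,\Lambda\rho^2]$ by heat-kernel stability \cite[Theorem~3.3]{AHT18}. You kill the short-time tail with the Gaussian bound and Bishop--Gromov, and the intermediate tail with the density lower bound $m(B_{\sqrt t}(x))\geq c\,\theta_\ell(x)t^{\ell/2}$. You then get the constant from $\int_0^\infty(4\pi s)^{-\ell/2}e^{-1/(4s)}\,ds=\Gamma(\ell/2-1)/(4\pi^{\ell/2})=1/(\ell(\ell-2)\omega_\ell)$, which checks out.

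The citation is shorter. Your version is self-contained and shows exactly what this step needs: the $Y$ being $\RCD(-K,N)$, the density and Euclidean measured tangent cone at the fixed pole, and $\ell>2$ for the integrability of $t^{-\ell/2}$. In particular, the product structure plays no role beyond $\ell>2$. The moving-point issue you flag is handled as you say: the rescaled gradient bound gives equicontinuity on $[\lambda,\Lambda]$, and the heat-kernel stability theorem applies along converging points.
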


\begin{rem}\label{rem:green-asymptotic-coefficient}
The coefficient is obtained by applying the Green-kernel blow-up convergence in~\cite[Proposition~2.3]{BDS22} to the Euclidean tangent cone and using its unit-Dirac Green kernel.
\end{rem}

Let $(X,d,m)$ satisfy the hypotheses of Theorem~\ref{thm:log-density-identity}, and write $\theta=\theta_k$. Set
\[ Y=X\times\R^3,\qquad m_Y=m\times\Leb^3,\qquad \ell=k+3>2. \]
The product is $\RCD(-K,N+3)$ and has essential dimension $\ell$. The positive finite intrinsic density and full-measure normalized Euclidean tangent cone property follow from~\cite{BS18}.

Choose a killed Green kernel
\[ G(a,b)=\int_0^\infty e^{-ct}k_t^{Y,a}(b)\,dt \]
with $c$ larger than the Gaussian time coefficient. Fix a smooth nondecreasing $H:\lbrack 0,\infty\rparen\to[0,1]$ with $H=0$ on $[0,1]$ and $H=1$ on $\lbrack 2,\infty\rparen$. Put
\[ A_{H,\ell} = \int_{\R^\ell}H(c_\ell|z|^{2-\ell})\,d\Leb^\ell(z), \qquad V_\epsilon(a) = \int_{Y}H(\epsilon G(a,b))\,dm_Y(b), \]
and
\begin{equation}\label{eq:local-green-regularization}
	\Phi_\epsilon(x) = \frac{\ell-2}{2} \log \left( \frac{V_\epsilon((x,0))} {A_{H,\ell}\epsilon^{\ell/(\ell-2)}} \right)
\end{equation}
where $c_\ell=1/[\ell(\ell-2)\omega_\ell]$. The constant $A_{H,\ell}$ is finite and positive, and $V_\epsilon$ is locally positive for small $\epsilon$, as shown below.

\begin{lem}\label{lem:green-density-convergence}
	Under the hypotheses of Theorem~\ref{thm:log-density-identity}, for every relatively compact open set $U\subset X$, there is $\epsilon_U>0$ such that $\Phi_\epsilon$ is locally Lipschitz on $U$ for $0<\epsilon<\epsilon_U$. As $\epsilon\to0^+$, $\Phi_\epsilon\to\Phi$ $m$-almost everywhere and locally in measure.
\end{lem}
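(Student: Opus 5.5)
The plan has three parts: positivity and local Lipschitz regularity of $V_\epsilon$ from two-sided Green bounds, almost-everywhere convergence from the Green singularity theorem, and a short upgrade to convergence in measure.

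\emph{Two-sided Green bounds.} Fix a relatively compact open $U\subset X$ and the compact slice $U'=\overline U\times\{0\}\subset Y$. Integrate the Gaussian bounds \eqref{eq:entropy-gaussian} for the product heat kernel in time against $e^{-ct}$, using Bishop--Gromov on $Y$. This gives, uniformly for $a\in U'$,
\[ C^{-1}\int_{d(a,b)^2}^{1}\frac{dt}{m_Y(B_{\sqrt t}(a))}\;\lesssim\; G(a,b)\;\lesssim\; C\int_{d(a,b)^2/C}^{\infty}\frac{e^{-c't}\,dt}{m_Y(B_{\sqrt t}(a))}. \]
Since $m_Y(B_r(a))\geq c r^{N+3}$ near $U'$, the upper bound makes $G(a,\cdot)$ exponentially small far away. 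It also makes $G(a,b)\to0$ uniformly as $d(a,b)\to\infty$. Hence $\{\epsilon G(a,\cdot)\geq1\}$ lies in a bounded ball $B_{R_\epsilon}(a)$, and $V_\epsilon(a)<\infty$.

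\emph{Positivity and local Lipschitz regularity.} The $\R^3$ factor gives $m_Y(B_r(a))\leq Cr^3$ for small $r$. The lower bound then makes $G(a,b)\geq c\,d(a,b)^{-1}$ near the diagonal, uniformly in $a\in U'$. So $\{\epsilon G(a,\cdot)\geq 2\}$ contains a small ball, and $V_\epsilon>0$ on $U'$. For the Lipschitz property, write
\[ V_\epsilon(a)-V_\epsilon(a')=\int_Y\bigl[H(\epsilon G(a,b))-H(\epsilon G(a',b))\bigr]\,dm_Y(b). \]
$H'$ is bounded and vanishes unless $1\leq\epsilon G\leq2$. That shell stays a positive distance $\rho_\epsilon$ from $a$, uniformly on $U'$. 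Fix $\epsilon_U$ small enough that $\rho_\epsilon$ exceeds the relevant scales, and use the gradient bound on $G$ in its first variable obtained by integrating the gradient estimate \eqref{eq:entropy-gaussian}. For $d(a,a')<\rho_\epsilon/2$ the integrand is then bounded by $C_\epsilon d(a,a')$ on a bounded set, so $V_\epsilon$ is locally Lipschitz on $U'$. Because $V_\epsilon$ is bounded below there, $\Phi_\epsilon=\frac{\ell-2}{2}\log(\cdot)$ composed with the restriction $x\mapsto(x,0)$ is locally Lipschitz on $U$.

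\emph{Almost-everywhere convergence.} Let $x$ be a point where $\theta_k(x)\in(0,\infty)$ and the normalized tangent cone of $X$ is $\R^k$; by \cite{BS18} these form a full-measure set. Then $a=(x,0)$ has $\theta_\ell(a)=\theta_k(x)$ (up to the normalization of $\omega$'s: $\omega_\ell\theta_\ell=\omega_k\omega_3\theta_k\cdot$const, which I must track; equivalently, verify by blow-up that $m_Y(B_r(a))\sim\omega_\ell\theta_\ell r^\ell$ with $\theta_\ell$ a fixed multiple of $\theta_k$ that $A_{H,\ell}$ absorbs). The point $a$ also has Euclidean normalized tangent cone $\R^\ell$, so Theorem~\ref{thm:green-asymptotic} applies:
\[ G(a,b)=\frac{c_\ell}{\theta_\ell(a)}\,d(a,b)^{2-\ell}\bigl(1+o(1)\bigr)\qquad(b\to a). \]
For small $\epsilon$ the support of $H(\epsilon G(a,\cdot))$ shrinks to radius $\sim\epsilon^{1/(\ell-2)}$, because the global decay bounds exclude far-away contributions. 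Rescale with $r=\epsilon^{1/(\ell-2)}$ and use $m_Y/r^\ell\to\theta_\ell\Leb^\ell$ weakly on the blow-up together with the uniform convergence of $\epsilon G$. This gives
\[ V_\epsilon(a)\,\epsilon^{-\ell/(\ell-2)}\to\theta_\ell\int H\!\bigl(c_\ell\theta_\ell^{-1}|w|^{2-\ell}\bigr)\,dw=A_{H,\ell}\,\theta_\ell^{1-\ell/(\ell-2)}=A_{H,\ell}\,\theta_\ell^{-2/(\ell-2)}, \]
where the middle equality follows by substituting $w=\theta_\ell^{1/(\ell-2)}v$. Taking logarithms yields $\Phi_\epsilon(x)\to-\log\theta_\ell(a)$, which equals $\Phi(x)$ after the normalization check above. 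Convergence in measure on bounded sets then follows from a.e. convergence, since $m$ is finite on bounded sets.

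\emph{Main obstacle.} The delicate step is the blow-up limit of $V_\epsilon$. It requires three things at once: uniform convergence of rescaled $\epsilon G$ on annuli, control of the region near the pole where $H=1$ (this is harmless because the measure is bounded), and exclusion of mass at intermediate scales where the asymptotic is not yet valid. I would handle the last point with the uniform upper Green bound, which forces $\epsilon G<1$ outside $B_{C\epsilon^{1/(\ell-2)}}(a)$ for small $\epsilon$, using the pointwise limit from Theorem~\ref{thm:green-asymptotic} at the fixed pole $a$.
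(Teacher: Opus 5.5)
Your proposal is correct and has the same structure as the paper's proof. Gaussian-integrated Green bounds give the localization and positivity (via $m_Y(B_s(a))\le Cs^3$ and the lower comparison) and the Lipschitz regularity away from the diagonal. Theorem~\ref{thm:green-asymptotic} at $a=(x,0)$ gives \eqref{eq:local-green-volume-limit}. Finiteness of $m$ on bounded sets upgrades a.e.\ convergence to local convergence in measure.

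\emph{The limit of $V_\epsilon$.} You evaluate $\lim V_\epsilon(a)\epsilon^{-\ell/(\ell-2)}$ differently. You pass to the measured blow-up at scale $r=\epsilon^{1/(\ell-2)}$. The paper instead writes $V_\epsilon(a)=\int_1^2H'(u)\,m_Y\{G(a,\cdot)>u/\epsilon\}\,du$ and sandwiches each superlevel set between balls $B_{r_\pm(s)}(a)$ using the two-sided Green asymptotic. That needs only the ball-volume asymptotic at $a$, not measured Gromov--Hausdorff convergence, and it avoids transporting the integrand through approximations. Your route also works, because $\epsilon G(a,\cdot)$ is uniformly close to a function of $d(a,\cdot)$ on the relevant annuli.

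\emph{The normalization.} The check you left open comes out exact. Compare the slicing $m_Y(B_r(a))=\int_{|w|<r}m(B_{\sqrt{r^2-|w|^2}}(x))\,dw$ with the Euclidean identity $\omega_\ell r^\ell=\int_{|w|<r}\omega_k(r^2-|w|^2)^{k/2}\,dw$. The density remainder is uniform over slice radii in $(0,r]$, so $\theta^Y_\ell(a)=\theta_k(x)$ with no extra constant. You do need this, because your fallback is not available. $A_{H,\ell}$ is a fixed Euclidean constant, so a relation $\theta_\ell=\lambda\theta_k$ with $\lambda\neq1$ would shift the limit of $\Phi_\epsilon$ by $-\log\lambda$, and the statement would fail. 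A minor slip: the substitution should be $w=\theta^{-1/(\ell-2)}v$, although your final constant $A_{H,\ell}\theta^{-2/(\ell-2)}$ is right.
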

\begin{proof}

	Fix a compact pole set $Q\subset Y$. Gaussian bounds show that $G(a,b)$ is uniformly bounded for $a\in Q$ and $d_Y(a,b)\geq\delta$. The product estimate
	\[ m_Y(B_s(a))\leq C_{Q}s^3 \]
	and the lower Green comparison~\cite[Proposition~2.21]{BS18} show that $G(a,b)\to\infty$ uniformly as $b\to a$ in a fixed compact neighborhood. Thus, for sufficiently small $\epsilon$, the kernel $H(\epsilon G(a,\cdot))$ is supported in a fixed compact set $P$. Its derivative shell stays away from the diagonal. The Green kernel is jointly locally Lipschitz off the diagonal, and $H(\epsilon G)$ is constant near the diagonal. Its support lies in the fixed compact set $P$, so integration shows that $V_\epsilon$ is positive and locally Lipschitz. On compact pole sets its positive minimum makes the normalized logarithm locally Lipschitz as well. Euclidean translation invariance makes it independent of the $\R^3$ coordinate.

	Fix $x$ with positive finite $k$-density and normalized measured tangent cone $\R^k$, and put $a=(x,z)$. Product-ball slicing gives
	\begin{align*}
		m_Y(B_r(a))&=\int_{|w|<r}m\bigl(B_{\sqrt{r^2-|w|^2}}(x)\bigr)\,dw\\
		&=\theta(x)\omega_k\int_{|w|<r}(r^2-|w|^2)^{k/2}\,dw\,(1+o(1))\\
		&=\theta(x)\omega_\ell r^\ell(1+o(1)).
	\end{align*}
	The density remainder is uniform over slice radii in $(0,r]$; the last equality is Euclidean ball slicing. Thus $\theta_\ell^Y(a)=\theta(x)$, and the normalized measured tangent cone is $\R^\ell$. Theorem~\ref{thm:green-asymptotic} gives
	\[ G(a,b)=c_\ell\theta(x)^{-1}d_Y(a,b)^{2-\ell}(1+o(1)) \qquad(b\to a). \]
	Let $q=\ell/(\ell-2)$. For any $0<\delta<1$, the Green asymptotic gives $\rho_\delta>0$ such that
	\[ (1-\delta)c_\ell\theta(x)^{-1}d_Y(a,b)^{2-\ell}\leq G(a,b)\leq(1+\delta)c_\ell\theta(x)^{-1}d_Y(a,b)^{2-\ell} \]
	whenever $0<d_Y(a,b)<\rho_\delta$. The kernel is bounded outside $B_{\rho_\delta}(a)$, so its sufficiently high superlevel sets lie in this ball. With
	\[ r_\pm(s)=\left(\frac{(1\pm\delta)c_\ell s}{\theta(x)}\right)^{1/(\ell-2)}, \]
	for all sufficiently small $s>0$,
	\[ B_{r_-(s)}(a)\setminus\{a\}\subset\{G(a,\cdot)>1/s\}\subset B_{r_+(s)}(a). \]
	Since the pole is null, ball-volume comparison, first as $s\to0^+$ and then $\delta\to0^+$, gives
	\[ m_Y\{G(a,\cdot)>1/s\}=\omega_\ell c_\ell^q\theta(x)^{1-q}s^q(1+o(1)). \]

	Since $H'\geq0$ is supported in $[1,2]$ and $H(0)=0$, the layer-cake identity is
	\[ H(v)=\int_1^2 H'(u)\mathbf1_{\{u<v\}}\,du\qquad(v\geq0). \]
	Tonelli's theorem consequently gives
	\[ V_\epsilon(a)=\int_1^2 H'(u)\,m_Y\{G(a,\cdot)>u/\epsilon\}\,du. \]
	In the preceding superlevel asymptotic, substitute $s=\epsilon/u$. Because $\epsilon/2\leq s\leq\epsilon$ for $1\leq u\leq2$, its relative remainder tends to zero uniformly in $u$ at this fixed pole. Hence
	\[ V_\epsilon(a)=\omega_\ell c_\ell^q\theta(x)^{1-q}\epsilon^q\left(\int_1^2 H'(u)u^{-q}\,du\right)(1+o(1)). \]
	The same identity in the Euclidean model gives
	\begin{align*}
		A_{H,\ell}&=\int_1^2 H'(u)\,\Leb^\ell\{\xi:c_\ell|\xi|^{2-\ell}>u\}\,du\\
		&=\omega_\ell c_\ell^q\int_1^2 H'(u)u^{-q}\,du.
	\end{align*}
	Since $1-q=-2/(\ell-2)$, these identities prove
	\begin{equation}\label{eq:local-green-volume-limit}
		V_\epsilon((x,z)) = A_{H,\ell}\theta{(x)}^{-2/(\ell-2)} \epsilon^{\ell/(\ell-2)}(1+o(1)).
	\end{equation}
	Hence $\Phi_\epsilon\to\Phi$ almost everywhere and in measure on compact sets.
\end{proof}

\subsection{Local control of the logarithmic density}\label{sec:local-transposition}\label{app:transposition}\label{sec:proof-density-integrability}

\begin{prop}\label{prop:uniform-directional}
	Under the hypotheses of Theorem~\ref{thm:log-density-identity}, for relatively compact open sets $A\Subset B\Subset X$, there are constants $C_{A,B}<\infty$ and $\epsilon_{A,B}>0$ such that, for every $f\in\Test(X)$ with $\supp f\Subset B$ and every $0<\epsilon<\epsilon_{A,B}$,
	\begin{equation}\label{eq:local-regularized-directional}
		\|\langle\nabla \Phi_\epsilon,\nabla f\rangle\|_{L^2(A)} \leq C_{A,B} \left( \|\Hess f\|_2+\|\Delta f\|_2 +\|\nabla f\|_2+\|f\|_2 \right).
	\end{equation}
\end{prop}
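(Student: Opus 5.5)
We plan to differentiate the regularized volume directly. Since $\Phi_\epsilon$ is independent of the $\R^3$ coordinate, we write $\Phi_\epsilon(x)=\frac{\ell-2}{2}\log V_\epsilon((x,0))+\mathrm{const}$, so that
\[ \langle\nabla\Phi_\epsilon,\nabla f\rangle(x)=\frac{\ell-2}{2}\,\frac{\langle\nabla_a V_\epsilon,\nabla f\rangle((x,0))}{V_\epsilon((x,0))}. \]
By \eqref{eq:local-green-volume-limit} and a uniform lower Green comparison on compact pole sets, $V_\epsilon\geq c\,\epsilon^{q}$ on $A$ with $q=\ell/(\ell-2)$. The same upper comparison gives $V_\epsilon\leq C\epsilon^q\,\theta^{-2/(\ell-2)}$-type bounds. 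Hence it suffices to bound $\epsilon^{-q}\langle\nabla_aV_\epsilon,\nabla f\rangle$ in $L^2(A)$, and we divide by $V_\epsilon$ only after a maximal-function estimate.

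Next we move the derivative from the pole to the integration variable. Let $v$ be a compactly supported extension of $f$ to $Y$, namely $v(x,w)=f(x)\rho(w)$ with $\rho=1$ near $0$. Heuristically, $\langle\nabla_a G(a,b),\nabla v(a)\rangle$ equals $-\langle\nabla_bG(a,b),\nabla v(b)\rangle$ plus a commutator. We would make this precise by the two-endpoint identity: write $\epsilon H'(\epsilon G)\langle\nabla_aG,\nabla v\rangle$, use the symmetry of $G$ and the equation $(\Delta^Y_b-c)G(a,\cdot)=-\delta_a$, and integrate by parts twice in $b$. The outcome should express $\langle\nabla_aV_\epsilon,\nabla v(a)\rangle$ as the sum of two terms:
\begin{itemize}
\item the superlevel average $\int H(\epsilon G)\,\Delta^Y v\,dm_Y$;
\item a stress term $\int \epsilon H'(\epsilon G)\,w_\epsilon\,[2\Hess^Yv-\Delta^Yv\,g_Y](\nabla G,\nabla G)$-type integrals, where $w_\epsilon$ solves a resolvent problem with shell forcing.
\end{itemize}
Every term is then linear in $\Hess v$, $\Delta v$, $\nabla v$ and $v$, evaluated at $b$ in the shell $\{1\leq\epsilon G\leq2\}$. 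After normalization by $\epsilon^{-q}$, this shell has $m_Y$-measure comparable to $\epsilon^q$, and by the Green asymptotics and gradient bounds $|\nabla G|\sim G^{(\ell-1)/(\ell-2)}$ there.

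With the identity in hand, we bound each normalized shell integral pointwise in $a$ by a local Hardy–Littlewood maximal function $\mathcal M(|\Hess v|+|\Delta v|+|\nabla v|+|v|)(a)$ at radius comparable to $\epsilon^{1/(\ell-2)}$. The key inputs are two-sided Gaussian/Green comparison bounds, $G(a,b)\approx d^2/m_Y(B_d(a))$-type for $\ell>2$, uniform for poles in a compact set. These make the shell comparable to an annulus whose measure matches $V_\epsilon(a)$, so the ratio to $V_\epsilon$ is bounded without using density limits. The local maximal operator is bounded on $L^2$ by doubling. Integrating over $A\times\{0\}$ versus $Y$ requires a slicing step: averaging in $w\in B_1(0)\subset\R^3$ uses independence of $\Phi_\epsilon$ from $w$ and Fubini. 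Since $\|\Hess^Y v\|_{L^2}\lesssim\|\Hess f\|_2+\|\nabla f\|_2+\|f\|_2$, this gives \eqref{eq:local-regularized-directional}.

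The main obstacle is the first step: establishing the two-endpoint integration-by-parts identity rigorously, and controlling its stress/resolvent term by a maximal function. The resolvent potential $w_\epsilon$ is nonlocal, so I would first try to avoid it by using $\langle\nabla_aG,\nabla v(a)\rangle+\langle\nabla_bG,\nabla v(b)\rangle$ directly. I would estimate this symmetric combination by $|\Hess v|$ along the pair using Gaussian gradient bounds for the mixed derivative. If that fails, I would bound the resolvent in $L^\infty$ by Green comparison, uniformly for poles in $B$.
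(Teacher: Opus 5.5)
Your architecture is right: the log-derivative of $V_\epsilon$, one integration by parts in $b$ as in \eqref{eq:variation-green-volume}, division by $V_\epsilon(a)$ through a maximal function, and Fubini in the $\R^3$ variable. But the estimate that carries the proposition is left as your ``main obstacle,'' and the mechanisms you suggest for it do not work.

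Everything hinges on bounding the symmetric pair $\langle\nabla^Y v(a),\nabla^Y_aG(a,b)\rangle+\langle\nabla^Y v(b),\nabla^Y_bG(a,b)\rangle$ on the shell. Gaussian bounds control $|\nabla^Y_aG|$ and $|\nabla^Y_bG|$ only separately. On the shell both are $\lesssim 1/(\epsilon r)$, with $r=r(a,\epsilon)$ the superlevel radius. After weighting by $\epsilon H'(\epsilon G)$ and dividing by $V_\epsilon(a)$, you get only $|\nabla^Y v(a)|/r$, which is unbounded as $\epsilon\to0$. What is needed is cancellation between the two endpoints. Kernel bounds cannot see it: $\nabla^Y_aG(a,b)$ and $\nabla^Y_bG(a,b)$ live at different points, and there is no parallel transport for a Taylor expansion of $\nabla v$ ``along the pair.''

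The paper supplies exactly this cancellation with the Green distortion estimate of Bru\'e--Semola \cite[Proposition~2.27]{BS18}, namely \eqref{eq:green-distortion}--\eqref{eq:green-distortion-l2}: the pair is bounded by $G(a,b)(f_v(a)+f_v(b))$, with $\|f_v\|_{L^2(P)}\le C(\|\Hess^Yv\|_2+\|\Delta^Yv\|_2)$. The rest is short. The weight $\epsilon GH'(\epsilon G)$ is bounded and supported in $\mathcal B_G(a,\epsilon)$, one has $V_\epsilon(a)\ge m_Y(\mathcal B_G(a,\epsilon/2))$, and Green superlevel sets are doubling. Hence $|\langle\nabla^Y\log V_\epsilon,\nabla^Yv\rangle|\le C(f_v+M_Gf_v+M_G|\Delta^Yv|)$, and $M_G\le CM$ finishes the proof. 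No resolvent or stress formula is needed here; the paper uses those only in Part~II to identify the limit.

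Your resolvent fallback does not close as stated either. The stress term is not localized to the shell. Integrating the two-endpoint identity against $\epsilon H'(\epsilon G(a,\cdot))$ gives $\int_Y(2\Hess^Yv-(\Delta^Yv)g_Y)(\nabla^YG_a,\nabla^YW_{\epsilon,a})\,dm_Y-c\int_Y(\Delta^Yv)G_aW_{\epsilon,a}\,dm_Y$. This is an integral over all of $\supp v$, and it involves $\nabla^YW_{\epsilon,a}$ rather than $W_{\epsilon,a}$.

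An $L^\infty$ bound on the resolvent does not control $\nabla^YW_{\epsilon,a}$. Even the sharp interior bound $|\nabla^YW_{\epsilon,a}|\lesssim\epsilon r$, used globally, only gives a normalized bound of order $r^{-1}\int_Y|2\Hess^Yv-(\Delta^Yv)g_Y|\,|\nabla^YG_a|\,dm_Y$, which blows up. To rescue this route you would need the decay $|\nabla^YW_{\epsilon,a}|(z)\lesssim\epsilon V_\epsilon(a)\,s/m_Y(B_s(a))$ for $s=d_Y(a,z)\gtrsim r$, with constants uniform in the pole. You would then sum dyadically using $m_Y(B_s(a))\ge(s/r)^3m_Y(B_r(a))$. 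That amounts to reproving the Bru\'e--Semola estimate in integrated form. Lemma~\ref{lem:shell-potential} cannot be borrowed for this, because its constants $C_a$ depend on the density at $a$.

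Separately, your uniform lower bound $V_\epsilon\ge c\,\epsilon^{\ell/(\ell-2)}$ on $A$ is unjustified. The asymptotic \eqref{eq:local-green-volume-limit} is pointwise and non-uniform. Uniform Green comparison gives only $V_\epsilon(a)\approx m_Y(B_r(a))\approx\epsilon r^2$. This dominates $\epsilon^{\ell/(\ell-2)}$ only under a uniform upper bound $m_Y(B_r(a))\lesssim r^\ell$, which is not available; it would bound $\Phi_\epsilon$, and hence $\Phi$, from below. Drop that step and normalize by $V_\epsilon(a)$ itself.
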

\begin{proof}
	Fix $A\Subset B$ and choose $\chi\in C_c^\infty(\R^3)$ equal to one on $B_2(0)$ and zero outside $B_3(0)$. For $f\in\Test(X)$ with $\supp f\Subset B$, put $v(x,z)=\chi(z)f(x)$. The product rules give
	\begin{equation}\label{eq:product-test-bound}
		\|\Hess^Y v\|_2+\|\Delta^Y v\|_2 \leq C_\chi\left(\|\Hess f\|_2+\|\Delta f\|_2+\|\nabla f\|_2+\|f\|_2\right).
	\end{equation}
	The derivatives of $v$ and $G$ in this proof are taken on $Y$. Write
	\[ \mathcal B_G(a,\rho)=\{b\in Y:G(a,b)>1/\rho\}. \]
	By the compact localization used in Lemma~\ref{lem:green-density-convergence}, there are a fixed compact product neighborhood $P$ containing $\overline B\times\overline{B_3(0)}$ and a number $\rho_0>0$ such that $\mathcal B_G(a,\rho)\subset P$ for $a\in\overline A\times\overline{B_1(0)}$ and $0<\rho<\rho_0$. Both choices depend only on the fixed sets and geometry.

	The product $Y$ satisfies Assumption~3 of~\cite{BS18}. Since $\nabla^Y v$ is bounded and compactly supported, with symmetric covariant derivative $\Hess^Y v$ and divergence $\Delta^Y v$,~\cite[Proposition~2.27]{BS18} gives $f_v\geq0$ on $P$ such that
	\begin{equation}\label{eq:green-distortion}
		\left|\langle\nabla^Y v(a),\nabla^Y_a G(a,b)\rangle+\langle\nabla^Y v(b),\nabla^Y_b G(a,b)\rangle\right|\leq G(a,b)\bigl(f_v(a)+f_v(b)\bigr)
	\end{equation}
	for $m_Y\otimes m_Y$-almost every $(a,b)\in P\times P$, and
	\begin{equation}\label{eq:green-distortion-l2}
		\|f_v\|_{L^2(P)}\leq C(P)\bigl(\|\Hess^Y v\|_2+\|\Delta^Y v\|_2\bigr).
	\end{equation}
	Differentiating $V_\epsilon$ and integrating by parts in $b$ gives
	\begin{equation}\label{eq:variation-green-volume}
		\begin{aligned}
			&\langle\nabla^Y V_\epsilon,\nabla^Y v\rangle(a)\\
			&\quad=\int_Y\epsilon H'(\epsilon G(a,b))\bigl[\langle\nabla^Y v(a),\nabla^Y_a G(a,b)\rangle+\langle\nabla^Y v(b),\nabla^Y_b G(a,b)\rangle\bigr]\,dm_Y(b)\\
			&\qquad+\int_YH(\epsilon G(a,b))\Delta^Y v(b)\,dm_Y(b).
		\end{aligned}
	\end{equation}
	For every fixed sufficiently small $\epsilon$, this identity holds for almost every allowed pole: the composed kernel has compact support in $P$ and is constant near the pole, while its derivative shell stays away from the diagonal.

	The local quasi-metric and doubling estimates~\cite[Propositions~2.24 and~2.26]{BS18} give
	\[ m_Y(\mathcal B_G(a,\rho))\leq C\,m_Y(\mathcal B_G(a,\rho/2)),\qquad 0<\rho<\rho_0. \]
	For functions on $P$, extended by zero, define
	\[ M_Gh(a)=\sup_{0<\rho<\rho_0}\frac{1}{m_Y(\mathcal B_G(a,\rho))}\int_{\mathcal B_G(a,\rho)}|h|\,dm_Y. \]
	The comparison argument in~\cite[Lemma~2.17]{BS18}, localized using~\cite[Propositions~2.21 and~2.26]{BS18}, gives $M_Gh\leq CMh$ on $A\times B_1(0)$, where $M$ is the metric Hardy--Littlewood maximal operator. The local bound~\cite[equation~(1.7)]{BS18} therefore yields
	\[ \|M_Gh\|_{L^2(A\times B_1(0))}\leq C\|h\|_{L^2(P)}. \]

	Since $V_\epsilon(a)\geq m_Y(\mathcal B_G(a,\epsilon/2))$ and $H(\epsilon G)$ and $\epsilon G H'(\epsilon G)$ are uniformly bounded and supported in $\mathcal B_G(a,\epsilon)$, dividing~\eqref{eq:variation-green-volume} by $V_\epsilon(a)$ gives
	\[ |\langle\nabla^Y\log V_\epsilon,\nabla^Y v\rangle(a)|\leq C\bigl(f_v(a)+M_Gf_v(a)+M_G|\Delta^Y v|(a)\bigr). \]
	On $A\times B_1(0)$, the definition of $\Phi_\epsilon$ and Euclidean translation invariance give
	\[ \langle\nabla \Phi_\epsilon,\nabla f\rangle=\frac{\ell-2}{2}\langle\nabla^Y\log V_\epsilon,\nabla^Y v\rangle. \]
	Taking $L^2$ norms, applying the maximal bound and~\eqref{eq:green-distortion-l2}, and then using~\eqref{eq:product-test-bound} proves~\eqref{eq:local-regularized-directional}. The neighborhood $P$, the upper bound on $\epsilon$, and all constants are independent of $f$.
\end{proof}

For later use, product-ball slicing gives, for every $a\in Y$,
\begin{equation}\label{eq:product-volume-growth}
	m_Y(B_R(a))\geq\left(\frac Rs\right)^3m_Y(B_s(a)),\qquad 0<s<R.
\end{equation}
Indeed, rescaling the Euclidean variable by $R/s$ multiplies its volume element by $(R/s)^3$ and increases every corresponding $X$-slice radius.

\begin{proof}[Proof of Theorem~\ref{thm:log-density-identity}(i)]
	We prove $\Phi\in L^2_{\mathrm{loc}}(m)$ by obtaining uniform local $L^2$ bounds for the regularizations~\eqref{eq:local-green-regularization}.

	Lemma~\ref{lem:green-density-convergence} gives convergence in measure to the finite function $\Phi$, and Proposition~\ref{prop:uniform-directional} gives~\eqref{eq:local-regularized-directional}. We use these established estimates to control the harmonic remainder in a local transposition argument.

	Fix $x\in X$ and choose $r>0$ such that all balls below have compact closure and
	\[ E=B_{16r}(x)\setminus B_{8r}(x) \]
	contains a nonempty open set. Full support gives $m(E)>0$. For a global test function $f$ compactly supported in $B_{8r}(x)$, let $\bar f=\fint_{B_{16r}(x)}f\,dm$ denote its mean over $B_{16r}(x)$. The local Poincar\'e inequality gives
	\[ \int_{B_{16r}(x)}|f-\bar f|^2\,dm \leq C r^2\int_X|\nabla f|^2\,dm. \]
	Since $f=0$ on $E$,
	\[ |\bar f|^2m(E) \leq \int_E|f-\bar f|^2\,dm. \]
	Combining these two inequalities bounds the mean term in $\|f\|_2^2$ and yields the Dirichlet estimate
	\[ \|f\|_2\leq C_{x,r}\|\nabla f\|_2. \]
	Reference integration by parts then gives
	\[ \|\nabla f\|_2^2 = -\int_{X}f\Delta f\,dm \leq C_{x,r}\|\nabla f\|_2\|\Delta f\|_2, \]
	and hence
	\[ \|\nabla f\|_2\leq C_{x,r}\|\Delta f\|_2, \qquad \|f\|_2\leq C_{x,r}^2\|\Delta f\|_2. \]
	The integrated Hessian estimate~\cite[Theorem~2.3, equation~(2.11)]{BGHZ21} now gives
	\[ \|\Hess f\|_2+\|\nabla f\|_2+\|f\|_2 \leq C_{x,r}\|\Delta f\|_2. \]

	Let $F:\R\to\R$ be smooth with $|F'|\leq1$. At each fixed $\epsilon$, local boundedness of $\Phi_\epsilon$ makes $F(\Phi_\epsilon)$ an admissible local Sobolev multiplier. Insert a compact cutoff equal to one near $\supp f$, apply the chain rule, and then use~\eqref{eq:local-regularized-directional}. This proves
	\begin{align}
		&\left|\int_{B_{8r}(x)}F(\Phi_\epsilon)\Delta f\,dm\right|\notag\\
		&\qquad=\left|\int_{B_{8r}(x)}F'(\Phi_\epsilon)\langle\nabla\Phi_\epsilon,\nabla f\rangle\,dm\right|\leq C_{x,r}\|\Delta f\|_{L^2(B_{8r}(x))}.\label{eq:transposition-estimate}
	\end{align}
	The constant $C_{x,r}$ depends only on the fixed ball and geometry and is independent of $F$ and $\epsilon$.

	Choose a smooth nondecreasing $\alpha:\R\to[0,1]$ that vanishes on $\lparen-\infty,0\rbrack$ and equals one on $\lbrack 1,\infty\rparen$, and put
	\[ F_+(s)=\int_0^s\alpha(t)\,dt. \]
	Then
	\[ F_+\geq0,\qquad 0\leq F_+'\leq1,\qquad F_+(s)\leq s_+,\qquad s_+-1\leq F_+(s). \]
	Apply~\eqref{eq:transposition-estimate} separately to
	\[ v_\epsilon=F_+(\Phi_\epsilon) \quad\text{and}\quad v_\epsilon=F_+(-\Phi_\epsilon). \]
	On the linear subspace of $L^2(B_{8r}(x))$ formed by the Laplacians of compactly supported global test functions, the functional
	\[ \Delta f\longmapsto\int_{B_{8r}(x)}v_\epsilon\Delta f\,dm \]
	has norm at most $C_{x,r}$. Extend it to the closure of that subspace and apply Riesz representation. There is $w_\epsilon\in L^2(B_{8r}(x))$ such that
	\[ \|w_\epsilon\|_{L^2(B_{8r}(x))}\leq C_{x,r}, \qquad \int_{B_{8r}(x)}(v_\epsilon-w_\epsilon)\Delta f\,dm=0 \]
	for every such $f$. By Weyl's lemma~\cite[Theorem~1.3]{PZZ25}, the function
	\[ h_\epsilon=v_\epsilon-w_\epsilon \]
	has a locally Lipschitz weakly harmonic representative in $B_{8r}(x)$.

	We use two standard local estimates for this harmonic remainder. On the fixed balls, the Cheeger form is a regular strongly local Dirichlet form with intrinsic distance $d$. Local doubling and Poincar\'e give the required Sobolev inequality~\cite[Theorem~2.6]{Stu96}. The negative part $h_-$ is a nonnegative weakly subharmonic function. Apply the local subsolution estimate~\cite[Theorem~2.1]{Stu95}, with $p=2$, to its stationary heat subsolution, using a finite covering inside $B_{4r}(x)$. This gives
	\[ \sup_{B_{3r}(x)}h_- \leq C\|h_-\|_{L^2(B_{4r}(x))}. \]
	For $H\geq0$ harmonic on $B_{3r}(x)$, the local parabolic Harnack theorem and its elliptic consequence~\cite[Theorem~3.5 and Proposition~3.2]{Stu96} give
	\[ \sup_{B_r(x)}H\leq C_H\inf_{B_r(x)}H. \]
	Both constants depend only on the fixed balls and their doubling and Poincar\'e data.

	Return to $h_\epsilon$. Since $v_\epsilon\geq0$,
	\[ {(h_\epsilon)}_-\leq|w_\epsilon|. \]
	The negative-part estimate and the uniform $L^2$ bound for $w_\epsilon$ therefore give
	\[ h_\epsilon\geq-C_0 \quad\text{on }B_{3r}(x), \]
	with $C_0$ independent of $\epsilon$. Thus $h_\epsilon+C_0$ is nonnegative and harmonic there.

	It remains to anchor this positive harmonic function. By~\eqref{eq:local-green-volume-limit}, $v_\epsilon$ converges in measure on $B_r(x)$ to the finite function $F_+(\Phi)$ or $F_+(-\Phi)$. Choose $B_0$ so that the set where this limit exceeds $B_0$ has measure less than $m(B_r(x))/8$. For sufficiently small $\epsilon$,
	\[ m\bigl(B_r(x)\cap\{v_\epsilon>B_0+1\}\bigr) <\frac14m(B_r(x)). \]
	Choose $J$ with
	\[ {(C_{x,r})}^2J^{-2}<\frac14m(B_r(x)). \]
	Chebyshev's inequality gives
	\[ m\bigl(B_r(x)\cap\{|w_\epsilon| > J\}\bigr) <\frac14m(B_r(x)). \]
	Hence a positive-measure subset of $B_r(x)$ satisfies both $v_\epsilon\leq B_0+1$ and $|w_\epsilon|\leq J$. At a point of this subset where the continuous harmonic representative agrees with $v_\epsilon-w_\epsilon$,
	\[ h_\epsilon+C_0\leq B_0+1+J+C_0. \]
	The Harnack estimate now bounds the supremum of $h_\epsilon+C_0$ on $B_r(x)$ by the same fixed data. Since $v_\epsilon=h_\epsilon+w_\epsilon$, this supremum bound and $\|w_\epsilon\|_2\leq C_{x,r}$ give a uniform $L^2(B_r(x))$ bound for $v_\epsilon$.

	Apply the argument to both signs. The inequality
	\[ |\Phi_\epsilon| \leq F_+(\Phi_\epsilon)+F_+(-\Phi_\epsilon)+1 \]
	then gives a uniform $L^2(B_r(x))$ bound for $\Phi_\epsilon$. A finite covering gives, for every compact $Q\subset X$, some $\epsilon_Q>0$ such that
	\[ \sup_{0<\epsilon<\epsilon_Q}\|\Phi_\epsilon\|_{L^2(Q,m)}<\infty. \]
	This uniform bound combines the transposition estimate~\eqref{eq:transposition-estimate} with the local harmonic estimates above; convergence in measure anchors the harmonic remainder. Fatou's lemma and the almost-everywhere convergence in Lemma~\ref{lem:green-density-convergence} now give $\Phi\in L^2_{\mathrm{loc}}(m)$, as required in Theorem~\ref{thm:log-density-identity}(i).

	The local $L^2$ bounds and convergence in measure identify every weakly convergent subsequence with $\Phi$. Thus the full family converges weakly in $L^2$ on compact sets. Uniform integrability of $|\Phi_\epsilon-\Phi|^p$ gives strong local $L^p$ convergence for $1\leq p<2$. 
\end{proof}

\section[The density theorem, Part II: the logarithmic-density identity]{Proof of Theorem~\ref{thm:log-density-identity}, Part II: the logarithmic-density identity}\label{sec:stress-limit}
This is the second block of the proof of Theorem~\ref{thm:log-density-identity}. Part~I has established the local $L^2$ integrability of $\Phi$ and the weak local $L^2$ convergence $\Phi_\epsilon\rightharpoonup\Phi$. We first construct the weak directional derivatives $b_f$ and then identify $b_f=\tr(\Hess f)-\Delta f$ on each relatively compact test region. For this identification, we establish the two-endpoint identity for truncated Green kernels, derive the finite-scale stress formula, and evaluate its Euclidean limit. The final subsection inserts the resulting expression for $b_f$ into its distributional representation.

Fix relatively compact open sets $A\Subset B\subset X$ and $f\in\Test(X)$ with $\supp f\Subset B$. By~\eqref{eq:local-regularized-directional}, a subsequence of $\langle\nabla \Phi_\epsilon,\nabla f\rangle$ converges weakly in $L^2(A)$ to some $b_f$. For compactly supported Lipschitz $\phi$,
\[ \int_A\phi\,\langle\nabla \Phi_\epsilon,\nabla f\rangle\,dm = -\int_X \Phi_\epsilon \bigl[\phi\Delta f+\langle\nabla \phi,\nabla f\rangle\bigr]\,dm. \]
The bracket belongs to $L^2$ on a compact set. Passing to the weak limits gives
\[ \int_A\phi b_f\,dm = -\int_X\Phi\bigl[\phi\Delta f+\langle\nabla \phi,\nabla f\rangle\bigr]\,dm \qquad(\phi\in\Lip_c(A)). \]
Density of compactly supported Lipschitz tests in $L^2(A)$ gives uniqueness. Lower semicontinuity and~\eqref{eq:local-regularized-directional} give
\begin{equation}\label{eq:local-density-directional}
	\|b_f\|_{L^2(A)} \leq C_{A,B} \left( \|\Hess f\|_2+\|\Delta f\|_2 +\|\nabla f\|_2+\|f\|_2 \right),
\end{equation}
where $C_{A,B}<\infty$ is independent of $f$.

\subsection{The finite-scale stress formula}\label{sec:truncated-green-stress}
We retain the notation and choices of Section~\ref{sec:soft-green-volumes}. Let $Y=X\times\R^3$, let $G$ be the killed kernel of Section~\ref{sec:green-asymptotics}, and write $A=A_{H,\ell}$. On compact sets,
\[ \Phi_\epsilon\rightharpoonup\Phi \quad\text{in }L^2, \]
and their directional derivatives satisfy the uniform bound~\eqref{eq:local-regularized-directional}.

Fix a compactly supported $f\in\Test(X)$. Choose $\chi\in C_c^\infty(\R^3)$ equal to one on $B_2(0)$ and zero outside $B_3(0)$, and put
\[ v(x,z)=\chi(z)f(x). \]
Derivatives of $v$ and the Green kernels are taken on $Y$. Write $G_a=G(a,\cdot)$ and let $g_Y$ be the tangent metric tensor. Define the shell potential by
\[ W_{\epsilon,a}(z)=\epsilon\int_YG(z,b)H'(\epsilon G(a,b))\,dm_Y(b). \]
Fix a relatively compact product pole set $Q\subset X\times B_1(0)$ and a compact neighborhood $P$ containing $\supp v$ and the Green superlevel sets used below.

\begin{prop}\label{prop:finite-stress}
	For every fixed sufficiently small $\epsilon>0$, the following identity holds for $m_Y$-almost every $a=(x,z)\in Q$:
	\begin{equation}\label{eq:general-finite-stress}
		\begin{aligned}
			&\frac{2V_\epsilon(a)}{\ell-2}\langle\nabla \Phi_\epsilon,\nabla f\rangle(x)\\
			&\quad=\int_Y\bigl(2\Hess^Y v-(\Delta^Y v)g_Y\bigr)(\nabla^Y G_a,\nabla^Y W_{\epsilon,a})\,dm_Y\\
			&\qquad\phantom{={}}-c\int_Y(\Delta^Y v)G_aW_{\epsilon,a}\,dm_Y+\int_YH(\epsilon G_a)\Delta^Y v\,dm_Y.
		\end{aligned}
	\end{equation}
\end{prop}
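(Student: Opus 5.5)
Starting from the first-variation identity~\eqref{eq:variation-green-volume} from the proof of Proposition~\ref{prop:uniform-directional}, I will rewrite the shell term through the resolvent. On $A\times B_1(0)$, Euclidean translation invariance and the definition~\eqref{eq:local-green-regularization} give
\[ \frac{2V_\epsilon(a)}{\ell-2}\langle\nabla\Phi_\epsilon,\nabla f\rangle(x)=\langle\nabla^YV_\epsilon,\nabla^Yv\rangle(a). \]
By~\eqref{eq:variation-green-volume}, this equals the shell integral
\[ S_\epsilon(a)=\int_Y\epsilon H'(\epsilon G(a,b))\bigl[\langle\nabla^Yv(a),\nabla^Y_aG(a,b)\rangle+\langle\nabla^Yv(b),\nabla^Y_bG(a,b)\rangle\bigr]\,dm_Y(b) \]
plus the last term of~\eqref{eq:general-finite-stress}. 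So it suffices to show that $S_\epsilon(a)$ equals the stress integral minus the killing term. The shell forcing $\rho_{\epsilon,a}=\epsilon H'(\epsilon G_a)$ is bounded, compactly supported in $P$, and vanishes near $a$. The potential satisfies $W_{\epsilon,a}=(c-\Delta^Y)^{-1}\rho_{\epsilon,a}$, so $(c-\Delta^Y)W_{\epsilon,a}=\rho_{\epsilon,a}$ weakly. Away from the pole, $(c-\Delta^Y)G_a=0$.

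The main step is a two-endpoint integration by parts. First consider the term $\langle\nabla^Yv(b),\nabla_bG_a\rangle\rho_{\epsilon,a}(b)$. I substitute $\rho=(c-\Delta^Y)W$ and integrate by parts to move the Laplacian from $W$ onto the vector field $\langle\nabla v,\nabla G_a\rangle$. Then I use the Hessian identity
\[ \langle\nabla^Y\langle\nabla^Yv,\nabla^YG_a\rangle,\nabla^YW\rangle=\Hess^Yv(\nabla^YG_a,\nabla^YW)+\Hess^YG_a(\nabla^Yv,\nabla^YW) \]
together with the corresponding divergence identity for $\Hess G_a$. Applied to a harmonic-type $G_a$, these produce the combination
\[ 2\Hess^Y v(\nabla G_a,\nabla W)-\Delta^Y v\,\langle\nabla G_a,\nabla W\rangle \]
and a killing term $-c\,\Delta^Yv\,G_aW$. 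The symmetric stress tensor is what appears when one symmetrizes the two contributions.

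The term with $\nabla^Yv(a)$ is the endpoint term. I will treat it by differentiating the identity $G_a(b)=G_b(a)$ in the pole and using $\int\rho\,\nabla_aG=\nabla_a$ applied to the symmetric function $W_{\epsilon,\cdot}$. The delta singularity of $(c-\Delta)G_a=\delta_a$, paired against $\langle\nabla v,\nabla W\rangle$, produces exactly $\langle\nabla v(a),\nabla W(a)\rangle$. This cancels, or reproduces, the $a$-endpoint term. That is why the formula contains no explicit pole contribution.

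To make this rigorous, I will replace $G_a$ by truncations $\min(G_a,T)$ or by the finite-time kernels $\int_\tau^\infty e^{-ct}k_t\,dt$. These are test-like, because of positive-time regularity~\cite{BGHZ21}. I will also use the Hessian regularity of $W$, which lies in $D(\Delta)$ with bounded compactly supported Laplacian, so $W\in W^{2,2}$ locally and is Lipschitz. The limit $\tau\to0$ is taken in the weak formulation of the Hessian, with the test functions $g_1,g_2$ replaced by $v$ and $W$. The boundary terms near the pole are of size $r^{\ell-1}\cdot r^{1-\ell}\cdot|\nabla v(a)||\nabla W(a)|$. They converge to the delta contribution only at Lebesgue points of $\nabla v$ and $\nabla W$ and at poles with Euclidean tangent cone, which is why the identity holds only for almost every $a$.

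I expect the main obstacle to be this pole analysis. Controlling $\Hess^YG_a$ integrability near the singularity in a nonsmooth space is delicate. I would first try to avoid $\Hess G$ altogether: pass the Hessian onto $v$ via the weak definition of $\Hess v$ with $g_1=G^\tau_a$ and $g_2=W$. The truncated kernels are test functions, and the pole terms then involve only first derivatives, which can be controlled by the Gaussian gradient bounds~\eqref{eq:entropy-gaussian}.
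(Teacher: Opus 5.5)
Your argument is correct, but it organizes the integration by parts differently from the paper. The paper first proves the symmetric two-pole identity \eqref{eq:exact-two-endpoint-stress} for $m_Y\otimes m_Y$-almost every $(a,b)$, truncating both kernels and invoking \cite[equation~(1.18)]{BS18}. Only afterwards does it multiply by $\epsilon H'(\epsilon G(a,b))$, integrate in $b$, and recognize $W_{\epsilon,a}$ by Fubini.

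You instead put the resolvent directly into the second slot. Since $\rho_{\epsilon,a}$ is Lipschitz with compact support, $W_{\epsilon,a}\in\Test(Y)$. So is $G^\tau_a=\int_\tau^\infty e^{-ct}k_t^{Y,a}\,dt$. The weak definition of $\Hess^Y v$ with $h\equiv1$, $g_1=G^\tau_a$, $g_2=W_{\epsilon,a}$ then gives, for every pole and every $\tau>0$, the exact identity
\[
\begin{aligned}
&\int_Y\rho_{\epsilon,a}\langle\nabla^Y v,\nabla^Y G^\tau_a\rangle\,dm_Y+e^{-c\tau}P^Y_\tau\bigl(\langle\nabla^Yv,\nabla^YW_{\epsilon,a}\rangle\bigr)(a)\\
&\quad=\int_Y\bigl(2\Hess^Yv-(\Delta^Yv)g_Y\bigr)(\nabla^YG^\tau_a,\nabla^YW_{\epsilon,a})\,dm_Y-c\int_Y(\Delta^Yv)G^\tau_aW_{\epsilon,a}\,dm_Y.
\end{aligned}
\]
This involves a single singular kernel and needs neither $\Hess^Y G$ nor \cite[equation~(1.18)]{BS18}. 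That is a real simplification.

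The price is paid in the limit $\tau\to0^+$, where the paper's two-variable bookkeeping is convenient. Two points need care.
\begin{itemize}
\item The stress term converges only at almost every pole. This follows from Tonelli and \eqref{eq:green-truncation-first-moment}, or holds at points where the maximal function of $|2\Hess^Yv-(\Delta^Yv)g_Y|$ is finite. The restriction is unavoidable, since $\Hess^Yv$ is merely $L^2$ while $|\nabla^YG_a|\notin L^2$ near $a$.
\item More importantly, the convergence of $P^Y_\tau(\langle\nabla^Yv,\nabla^YW_{\epsilon,a}\rangle)(a)$ is not a plain Lebesgue-point statement, because the averaged function depends on the evaluation point $a$. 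To identify its limit with the $a$-endpoint term of \eqref{eq:variation-green-volume}, you need one of two tools. The first is the representation $\nabla^YW_{\epsilon,a}=\int_Y\rho_{\epsilon,a}(b)\nabla^YG(\cdot,b)\,dm_Y(b)$ together with $L^1(m_Y\otimes m_Y)$ convergence of the heat averages of $\langle\nabla^Yv,\nabla^YG(\cdot,b)\rangle$; this is exactly the paper's step. The second is the Lipschitz dependence of $a\mapsto\nabla^YW_{\epsilon,a}$ in $L^\infty$.
\end{itemize}

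Your excision heuristic and the Euclidean tangent cone at $a$ play no role. In the heat-regularized form the delta contribution is an exact heat average, and the identity holds at almost every pole without any tangent-cone input.
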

\begin{proof}
	We use the symmetric-derivative Green-kernel calculation of~\cite{BS18}. For compactly supported $v\in\Test(Y)$, the calculation gives, for $(m_Y\otimes m_Y)$-almost every $(a,b)$,
	\begin{align}
		&\langle\nabla^Y v(a),\nabla^Y_a G(a,b)\rangle+\langle\nabla^Y v(b),\nabla^Y_b G(a,b)\rangle\notag\\
		&\quad=\int_Y(2\Hess^Y v-(\Delta^Y v)g_Y)(\nabla^Y G_a,\nabla^Y G_b)\,dm_Y\notag\\
		&\qquad-c\int_Y(\Delta^Y v)G(a,\cdot)G(b,\cdot)\,dm_Y.
		\label{eq:exact-two-endpoint-stress}
	\end{align}
	To justify the identity and its integrability, define
	\[ G_\delta(a,z)=\int_\delta^\infty e^{-ct}k_t^{Y,a}(z)\,dt. \]
	By~\cite[equation~(2.50)]{BS18},
	\[ (c-\Delta^Y_z)G_\delta(a,z)=e^{-c\delta}k_\delta^{Y,a}(z). \]
	Apply~\cite[equation~(1.18)]{BS18} with vector field $\nabla^Y v$, the two truncated kernels, and a compact test cutoff equal to one near $\supp v$, as in Step~1 of the proof of~\cite[Proposition~2.18]{BS18}. This gives the regularized form of~\eqref{eq:exact-two-endpoint-stress}. The killing term has the displayed negative sign because
	\begin{align*}
		&c\int_Y\langle\nabla^Y v,\nabla^Y(G_\delta(a,\cdot)G_\delta(b,\cdot))\rangle\,dm_Y\\
		&\qquad=-c\int_Y(\Delta^Y v)G_\delta(a,\cdot)G_\delta(b,\cdot)\,dm_Y.
	\end{align*}

	The Gaussian gradient bounds~\cite[Corollaries~1.1--1.2]{JLZ16}, integrated in the pole variable and then in time, give, for $0<\delta<1$,
	\begin{align}
		\sup_{z\in\supp v}\int_Y|\nabla^Y_zG(z,a)|\,dm_Y(a)&\leq C,\label{eq:green-gradient-first-moment}\\
		\sup_{z\in\supp v}\int_Y|\nabla^Y_z(G-G_\delta)(z,a)|\,dm_Y(a)&\leq C\sqrt\delta.\label{eq:green-truncation-first-moment}
	\end{align}
	Here the short-time gradient moment is $O(t^{-1/2})$; for $t\geq1$, the semigroup property bounds it by its time-one value. Mass preservation also gives
	\[ \int_YG(z,a)\,dm_Y(a)=c^{-1},\qquad \int_Y(G-G_\delta)(z,a)\,dm_Y(a)\leq\delta. \]
	Since $2\Hess^Y v-(\Delta^Y v)g_Y$ and $\Delta^Y v$ are compactly supported and in $L^1$, Tonelli gives absolute integrability of both stress terms in the pole pair. The same bounds give local $L^1$ errors $O(\sqrt\delta)$ and $O(\delta)$ for their truncated versions. On the other side, $L^1$ heat contraction and strong continuity give convergence of the two heat averages to the two gradient pairings on the left of~\eqref{eq:exact-two-endpoint-stress}, exactly as the proof of Proposition~2.18 and the proof of Proposition~2.27 in~\cite{BS18}, compare also its equation~(2.51). This proves~\eqref{eq:exact-two-endpoint-stress} in local $L^1(m_Y\otimes m_Y)$.

	For the product test function fixed above, multiply~\eqref{eq:exact-two-endpoint-stress} by $\epsilon H'(\epsilon G(a,b))$ and integrate in $b$. This weight is bounded and supported in $P$, so the established absolute integrability permits Fubini. The definition of $W_{\epsilon,a}$ and weak differentiation of the resolvent identify the resulting integrals with the first two terms on the right of~\eqref{eq:general-finite-stress}. Substitution into~\eqref{eq:variation-green-volume} proves the formula, since on $Q$, where $\chi=1$,
	\[ \langle\nabla^Y V_\epsilon,\nabla^Y v\rangle(a)=\frac{2V_\epsilon(a)}{\ell-2}\langle\nabla\Phi_\epsilon,\nabla f\rangle(x). \]
\end{proof}

\subsection{Shell-potential estimates}
Fix a decreasing sequence $\epsilon_i\to0^+$ and intersect the full-measure validity sets of~\eqref{eq:general-finite-stress}. Choose a pole $a\in Q$ with positive finite $\ell$-density
\[ \theta=\theta_\ell^Y(a), \]
unique normalized Euclidean measured tangent cone, and the coefficient Lebesgue-point properties used below. These conditions hold almost everywhere. Set
\[ r={\left(\frac{\epsilon}{\theta}\right)}^{1/(\ell-2)}, \qquad \epsilon=\theta r^{\ell-2}. \]
For a fixed small $\rho>0$,
\[ c_{a}s^\ell\leq m_Y(B_s(a))\leq C_{a}s^\ell \qquad(0<s<\rho). \]
Theorem~\ref{thm:green-asymptotic} gives
\[ G_a(z) = c_\ell\theta^{-1}d_Y{(a,z)}^{2-\ell}(1+o(1)) \]
along every approach to $a$. The fixed-pole gradient estimate gives
\[ |\nabla^Y G_a|(z)\leq C_{a}d_Y{(a,z)}^{1-\ell}. \]
Hence the forcing term $\epsilon H'(\epsilon G_a)$ is supported in an annulus
\[ c_{a}r<d_Y(a,\cdot)<C_{a}r, \]
and~\eqref{eq:local-green-volume-limit}, together with $\epsilon=\theta r^{\ell-2}$, gives
\[ V_\epsilon(a)=A_{H,\ell}\theta r^\ell(1+o(1)) \qquad(\epsilon\to0^+). \]
In particular, for all sufficiently small $\epsilon>0$,
\[ \frac12 A_{H,\ell}\theta r^\ell\leq V_\epsilon(a)\leq2A_{H,\ell}\theta r^\ell. \]
The two comparison constants are independent of $\epsilon$.

\begin{lem}\label{lem:shell-potential}
	Let $a\in Q$ have positive finite $\ell$-density $\theta$ and unique normalized Euclidean measured tangent cone $\R^\ell$. Set $r=(\epsilon/\theta)^{1/(\ell-2)}$ and $W=W_{\epsilon,a}$. There are $\rho>0$, $C_1\geq1$, $C_a<\infty$, and $\epsilon_a>0$, independent of $\epsilon$, such that, for $0<\epsilon<\epsilon_a$ and $m_Y$-almost every evaluation point $z$:
	\begin{enumerate}
		\item[(i)] On $B_{C_1r}(a)$,
		\begin{equation}\label{eq:W-inner}
			|W(z)|\leq C_a\epsilon r^2, \qquad |\nabla^Y W|(z)\leq C_a\epsilon r.
		\end{equation}
		\item[(ii)] If $s=d_Y(a,z)$ satisfies $C_1r\leq s<\rho$,
		\begin{equation}\label{eq:W-outer}
			|W(z)|\leq C_a\epsilon r^{\ell}s^{2-\ell}, \qquad |\nabla^Y W|(z)\leq C_a\epsilon r^{\ell}s^{1-\ell}.
		\end{equation}
	\end{enumerate}
	On $P\setminus B_\rho(a)$, one also has $|W|+|\nabla^Y W|\leq C_a\epsilon r^\ell$.
\end{lem}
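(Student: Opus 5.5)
Write $W(z)=\int_Y G(z,b)\,\mu(db)$ with the forcing measure $\mu=\epsilon H'(\epsilon G_a)\,m_Y$. The plan is to estimate $W$ and $\nabla^YW$ directly from pointwise Green bounds, splitting according to the position of $z$ relative to the shell where $\mu$ lives.

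\textbf{Forcing.} Since $H'$ is bounded and supported in $[1,2]$, and $G_a\asymp\theta^{-1}d_Y(a,\cdot)^{2-\ell}$ near $a$, the forcing is supported in the annulus $S=\{c_ar<d_Y(a,\cdot)<C_ar\}$. It has density $\leq C\epsilon$. The fixed-pole volume bounds give $m_Y(S)\leq C_ar^\ell$, so $\mu(Y)\leq C_a\epsilon r^\ell$.

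\textbf{Kernel bounds.} I need near-diagonal Green bounds at nearby poles $b\in S$ and evaluation points $z$ with $d_Y(a,z)<\rho$. Integrating the Gaussian bounds \eqref{eq:entropy-gaussian} on $Y$ in time gives
\[ G(z,b)\leq C\int_{d^2}^\infty \frac{dt}{m_Y(B_{\sqrt t}(b))}+C, \qquad |\nabla_z^YG(z,b)|\leq C\int_{d^2}^\infty\frac{dt}{\sqrt t\,m_Y(B_{\sqrt t}(b))}+C, \]
with $d=d_Y(z,b)$, plus Gaussian-tail contributions from $t<d^2$ that are dominated the same way. Two volume facts then control these integrals:
\begin{itemize}
\item Bishop--Gromov with the comparison $c_as^\ell\leq m_Y(B_s(a))$ at the fixed pole gives $m_Y(B_s(b))\geq c_a s^\ell$ for $b\in S$ and $s\geq r$.
\item For $s<r$, the product growth \eqref{eq:product-volume-growth}, $m_Y(B_s(b))\geq (s/r)^3m_Y(B_r(b))$, combined with Bishop--Gromov lower exponent $N+3$, controls the singular part.
\end{itemize}
As a result, for $d\geq r$ the integrals give $G\leq C_ad^{2-\ell}$ and $|\nabla G|\leq C_ad^{1-\ell}$ (using $\ell>2$). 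For $d<r$, the singularity is integrable against the bounded density of $\mu$. The main tool is the layer decomposition
\[ \int_{B_r(z)}\int_{d(z,b)^2}^{\infty}\frac{dt}{\sqrt t\,m_Y(B_{\sqrt t}(b))}\,dm_Y(b)\lesssim\int_0^{Cr^2}\frac{m_Y(B_{\sqrt t}(z))}{\sqrt t\,m_Y(B_{\sqrt t}(z))}\,dt+\dots\lesssim r, \]
where Fubini in $(b,t)$ and local comparability of $m_Y(B_{\sqrt t}(b))$ with $m_Y(B_{\sqrt t}(z))$ for $d(z,b)<\sqrt t$ are used. The same computation without the $\sqrt t$ in the denominator gives $r^2$.

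\textbf{Cases.}
\begin{itemize}
\item \emph{Inner region} $z\in B_{C_1r}(a)$. Split $b$ into $d(z,b)<r$ and $d(z,b)\geq r$. The near part contributes $C\epsilon r^2$ for $W$ and $C\epsilon r$ for $\nabla W$, by the layer computation. The far part contributes at most $\mu(Y)\,r^{2-\ell}\leq C\epsilon r^2$, respectively $\mu(Y)\,r^{1-\ell}\leq C\epsilon r$. This gives \eqref{eq:W-inner}.
\item \emph{Outer region} $C_1r\leq s<\rho$, with $C_1$ taken larger than $2C_a$. Then $d(z,b)\geq s/2$ for every $b\in S$, so the far-kernel bounds give $|W|\leq \mu(Y)\,Cs^{2-\ell}$ and $|\nabla W|\leq\mu(Y)\,Cs^{1-\ell}$, which is \eqref{eq:W-outer}.
\item \emph{Far region} $z\in P\setminus B_\rho(a)$. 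Here $d(z,b)\geq\rho/2$, and $G$ and $|\nabla G|$ are uniformly bounded on such pairs by the Gaussian bounds together with the killing factor. Hence $|W|+|\nabla W|\leq C\mu(Y)\leq C_a\epsilon r^\ell$.
\end{itemize}

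\textbf{Gradient justification.} Differentiating under the integral should be justified by the resolvent identity $(c-\Delta^Y)W=\epsilon H'(\epsilon G_a)$ together with the absolute integrability \eqref{eq:green-gradient-first-moment}. The resulting bound is then stated for $m_Y$-almost every $z$.

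\textbf{Main obstacle.} I expect the hardest step to be the uniformity of the volume lower bounds at the shell poles $b\neq a$ at scales below $r$, since density information is available only at the fixed pole $a$. I would handle this by transferring from $a$ via Bishop--Gromov at scale $\sim r$, where $B_{Cr}(b)\supset B_r(a)$, so these constants depend only on $a$. Below scale $r$, I would use the lower bound $m_Y(B_s(b))\geq c(s/r)^{N+3}m_Y(B_r(b))$, which still makes $\int_{d^2}^{r^2}t^{-1/2}m_Y(B_{\sqrt t}(b))^{-1}dt$ integrable in $b$ through the layer/Fubini argument.
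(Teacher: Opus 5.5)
Your proof is correct. The outer and far regions are handled as in the paper: pointwise Green and Green-gradient bounds at shell poles, Bishop--Gromov below scale $s$ transferred from $a$ via $B_{s-C_ar}(a)\subset B_s(b)$ (which also disposes of what you call the main obstacle), then integration against a forcing of mass $C_a\epsilon r^\ell$.

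The inner region is organized differently. The paper uses no pointwise Green bounds there. It writes $W=\int_0^\infty e^{-ct}P_t^Y\bigl(\epsilon H'(\epsilon G_a)\bigr)\,dt$ and splits in time at $r^2$:
\begin{itemize}
\item For $t<r^2$, only $L^\infty$ contraction and the smoothing bound $|\nabla^YP_t^Yg|\leq Ct^{-1/2}\|g\|_\infty$ are used, so no volume information enters.
\item For $t\geq r^2$, the Gaussian bounds and the product growth give the volume ratio $C_a(r/\sqrt t)^3$, which integrates to $C_a\epsilon r^2$ and $C_a\epsilon r$.
\end{itemize}
You instead split the pole variable at $d_Y(z,b)=r$. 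The singular part is treated by your Fubini/doubling layer computation, which is essentially the same time split carried out inside a spatial integral. The far part is treated by your pole-uniform bounds $G\leq C_ad^{2-\ell}$ and $|\nabla^Y_zG|\leq C_ad^{1-\ell}$. The paper's version is shorter and needs only bounds for $P_t^Y$ of a bounded shell-supported function. Yours has the merit that a single set of kernel estimates serves all three regions.

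Two small corrections.

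First, the product inequality \eqref{eq:product-volume-growth} says $m_Y(B_s(b))\leq(s/R)^3m_Y(B_R(b))$ for $s<R$. It is an upper bound for small balls, not the lower bound stated in your second bullet. Lower bounds below scale $r$ come only from Bishop--Gromov with exponent $N+3$, as in your last paragraph. The product inequality is used, in its correct direction, in the large-time part of your layer computation, where it gives $\int_{r^2}^\infty t^{-1/2}\,m_Y(B_r(z))/m_Y(B_{\sqrt t}(z))\,dt\lesssim r$.

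Second, the pointwise bound $|\nabla^YW|(z)\leq\int|\nabla^Y_zG(z,b)|\,d\mu(b)$ is most naturally obtained from the heat representation. One time-integrates bounds for $|\nabla^YP_t^Y g|$ and uses sublinearity of the minimal weak upper gradient. The resolvent identity by itself gives no pointwise gradient control.
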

\begin{proof}

	We use the Gaussian kernel and gradient estimates~\cite[Theorems~1.1--1.2 and Corollaries~1.1--1.2]{JLZ16}, together with the product-volume growth~\eqref{eq:product-volume-growth}. The shell-dependent powers follow from the heat representation
	\[ W(z)=\int_0^\infty e^{-ct}P_t^Y\bigl(\epsilon H'(\epsilon G_a)\bigr)(z)\,dt. \]
	The shell description and the density bounds at $a$ give
	\[ \|\epsilon H'(\epsilon G_a)\|_\infty\leq C_H\epsilon, \qquad \|\epsilon H'(\epsilon G_a)\|_1\leq C_a\epsilon\theta r^\ell. \]
	Fix a sufficiently large constant $C_1$. If $z\in B_{C_1r}(a)$, split the heat integral at $r^2$. The $L^\infty$ contraction and its integrated gradient estimate yield
	\[ \int_0^{r^2}e^{-ct}|P_t^Y\bigl(\epsilon H'(\epsilon G_a)\bigr)(z)|\,dt \leq C\epsilon r^2, \]
	and
	\[ \int_0^{r^2}e^{-ct}|\nabla^Y P_t^Y\bigl(\epsilon H'(\epsilon G_a)\bigr)|(z)\,dt \leq C\epsilon\int_0^{r^2}t^{-1/2}\,dt \leq C\epsilon r. \]
	For $t\geq r^2$, the Gaussian kernel and gradient estimates, comparison of balls centered in $B_{C_1r}(a)$, and the shell support of the forcing term give
	\[ |P_t^Y\bigl(\epsilon H'(\epsilon G_a)\bigr)(z)| \leq \frac{C_a\epsilon\,m_Y(B_{C_{a}r}(a))} {m_Y(B_{\sqrt t}(a))}, \]
	\[ |\nabla^Y P_t^Y\bigl(\epsilon H'(\epsilon G_a)\bigr)|(z) \leq \frac{C_a\epsilon\,m_Y(B_{C_{a}r}(a))} {\sqrt t\,m_Y(B_{\sqrt t}(a))}. \]
	By~\eqref{eq:product-volume-growth}, the quotient of ball measures is at most $C_a{(r/\sqrt t)}^3$. Integration from $r^2$ onward, with the killing factor absorbing the large-time curvature exponential, gives respectively $C_a\epsilon r^2$ and $C_a\epsilon r$. This proves~\eqref{eq:W-inner}.

	Now let $s=d_Y(a,z)\geq C_1r$ and $s<\rho$. For every $b\in\supp\bigl(H'(\epsilon G_a)\bigr)$, the choice of $C_1$ gives $d_Y(z,b)\asymp s$. Split the kernel representation of $G(z,b)$ at time $s^2$. For $0<t<s^2$, Bishop--Gromov comparison and $B_{s-C_{a}r}(a)\subset B_s(b)$ give
	\[ m_Y(B_{\sqrt t}(b)) \geq C_a^{-1} {\left(\frac{\sqrt t}{s}\right)}^{N+3} \theta s^\ell. \]
	The Gaussian separation factor therefore gives
	\[ \int_0^{s^2}e^{-ct}k_t^{Y,z}(b)\,dt \leq C_a\theta^{-1}s^{N+3-\ell} \int_0^{s^2}t^{-(N+3)/2}e^{-c_{a}s^2/t}\,dt \leq C_a\theta^{-1}s^{2-\ell}. \]
	Including the factor $t^{-1/2}$ from the heat-kernel gradient bound gives $C_a\theta^{-1}s^{1-\ell}$. For $t\geq s^2$, the product-volume inequality implies
	\[ m_Y(B_{\sqrt t}(b)) \geq C_a^{-1}\theta s^\ell {\left(\frac{\sqrt t}{s}\right)}^3. \]
	The resulting time integrals, again with the killing factor at large time, are bounded by $C_a\theta^{-1}s^{2-\ell}$ for the kernel and $C_a\theta^{-1}s^{1-\ell}$ for its gradient. Thus
	\[ G(z,b)\leq C_a\theta^{-1}s^{2-\ell}, \qquad |\nabla^Y_z G(z,b)|\leq C_a\theta^{-1}s^{1-\ell}. \]
	Integration against the forcing term, whose mass is at most $C_a\epsilon\theta r^\ell$, proves~\eqref{eq:W-outer}.
	On the fixed compact part of $P\setminus B_\rho(a)$, the shell and the evaluation point remain uniformly separated. The corresponding kernel and gradient bounds give
	\[ |W|+|\nabla^Y W|\leq C_a\epsilon r^\ell. \]
\end{proof}

\subsection{The normalized stress limit}
\begin{prop}\label{prop:normalized-stress}
	Fix a decreasing sequence $\epsilon_i\to0^+$. For $m_Y$-almost every $a\in Q$, the following limits hold as $i\to\infty$:
	\begin{enumerate}
		\item[(i)] The normalized stress converges to the trace of the frozen tensor:
		\begin{align*}
			&\frac1{V_{\epsilon_i}(a)}\int_Y\bigl(2\Hess^Y v-(\Delta^Y v)g_Y\bigr)(\nabla^Y G_a,\nabla^Y W_{\epsilon_i,a})\,dm_Y\\
			&\qquad\longrightarrow\frac{2\tr(\Hess^Y v)(a)-\ell\Delta^Y v(a)}{\ell-2}.
		\end{align*}
		\item[(ii)] The normalized killing term vanishes:
		\begin{equation}\label{eq:killing-vanishes}
			\frac1{V_{\epsilon_i}(a)} \int_Y|\Delta^Y v|G_a W_{\epsilon_i,a}\,dm_Y \longrightarrow0.
		\end{equation}
	\end{enumerate}
\end{prop}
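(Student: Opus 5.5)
The plan is to prove the two limits by direct estimation at a fixed good pole $a$. Throughout, $r=(\epsilon_i/\theta)^{1/(\ell-2)}$ and $T=2\Hess^Y v-(\Delta^Y v)g_Y$. Since $\frac12A_{H,\ell}\theta r^\ell\le V_{\epsilon_i}(a)\le 2A_{H,\ell}\theta r^\ell$, it suffices to understand each integral at the scale $\theta r^\ell$.

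\emph{Part (ii).} I would split $Y$ into three regions: $B_{C_1r}(a)$, the annuli $C_1r\le s<\rho$, and $P\setminus B_\rho(a)$, where $s=d_Y(a,\cdot)$. I would use $|\Delta^Y v|\le C$, the fixed-pole bound $G_a\le C_a s^{2-\ell}$ and Lemma~\ref{lem:shell-potential}.
\begin{itemize}
\item On the inner ball, $|W|\le C_a\epsilon r^2$ and $\int_{B_{C_1r}}G_a\le C_a r^2$. The contribution is therefore $O(\epsilon r^4)$, and after division by $\theta r^\ell=\epsilon r^2$ it is $O(r^2)$.
\item On the annuli, the integrand is bounded by $\epsilon r^\ell s^{4-2\ell}$ against $s^{\ell-1}\,ds$. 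This gives $O(\epsilon r^\ell(r^{4-\ell}+\log(1/r)+1))$, which normalizes to $O(r^2+r^{\ell-2}\log(1/r))$.
\item The far region gives $O(\epsilon r^\ell)$, which normalizes to $O(\epsilon)$.
\end{itemize}
All three pieces tend to zero, which proves~\eqref{eq:killing-vanishes}.

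\emph{Part (i), localisation and freezing.} I would use the same splitting, with the weight $|\nabla^Y G_a||\nabla^Y W|$.
\begin{itemize}
\item The far region again normalizes to $O(\epsilon)$.
\item For $Rr\le s<\rho$, Lemma~\ref{lem:shell-potential} gives the weight $\epsilon r^\ell s^{2-2\ell}$. Since $T\in L^2$ and we are at a Lebesgue point of $|T|^2$, dyadic summation bounds this part by $C_a R^{2-\ell}$ after normalization.
\item For $s<\delta r$, the weight is $s^{1-\ell}\epsilon r$, and the contribution normalizes to $O(\delta)$.
\end{itemize}
This leaves the window $\delta r<s<Rr$. There the normalized weight is bounded by $C_{a,\delta,R}/r^\ell$. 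At a Lebesgue point, $\fint_{B_{Rr}}|T-T(a)|\to0$, so $T$ may be replaced by its frozen value with vanishing error.

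\emph{Part (i), giving meaning to $T(a)$.} Freezing a tensor requires a pointwise frame, and I would supply one by harmonic almost splitting maps $h^1,\dots,h^\ell$ at $a$, as in Proposition~\ref{prop:regular-almost-splitting}. I would replace $T$ by $\sum_{\alpha\beta}T_{\alpha\beta}(a)\,dh^\alpha\otimes dh^\beta$, where $T_{\alpha\beta}=T(\nabla h^\alpha,\nabla h^\beta)$ are Lebesgue-point values. The replacement error is controlled by the $L^2$ Gram error, which is legitimate because $\nabla W$ and $\nabla G_a$ are pointwise bounded on the window.

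\emph{Part (i), blow-up and rotational invariance.} I would then rescale by $d_i=r^{-1}d_Y$ and $m_Y/(\theta r^\ell)$. The rescaled functions $\tilde G=\epsilon G_a$ converge locally uniformly on annuli to $c_\ell|\xi|^{2-\ell}$, by Theorem~\ref{thm:green-asymptotic} and the blow-up stability of~\cite{BDS22}. The rescaled potentials $\epsilon^{-1}r^{-2}W$ converge to the Euclidean resolvent-free potential $W^{\mathrm E}$ of the forcing $H'(c_\ell|\xi|^{2-\ell})$. Gradient convergence holds strongly in $L^2$ on fixed annuli: I would obtain it from uniform $W^{1,2}$ bounds (Lemma~\ref{lem:shell-potential}) together with energy convergence in Mosco/AHT-type stability, and the killing term disappears under the scaling because $cr^2\to0$. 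The splitting maps converge to linear coordinates. Both $\nabla G^{\mathrm E}$ and $\nabla W^{\mathrm E}$ are radial, so rotational invariance gives $\int T_{\alpha\beta}\partial_\alpha G^{\mathrm E}\partial_\beta W^{\mathrm E}=\frac{\tr T(a)}{\ell}\int\langle\nabla G^{\mathrm E},\nabla W^{\mathrm E}\rangle$. Integrating by parts,
\[ \int\langle\nabla G^{\mathrm E},\nabla W^{\mathrm E}\rangle=\int c_\ell|\xi|^{2-\ell}H'(c_\ell|\xi|^{2-\ell})\,d\xi=q\,\omega_\ell c_\ell^q\int_1^2H'(u)u^{-q}\,du=qA_{H,\ell}, \]
where $q=\ell/(\ell-2)$; this uses the layer-cake computation from Lemma~\ref{lem:green-density-convergence}. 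Dividing by $A_{H,\ell}\theta r^\ell$ and letting $\delta\to0$, then $R\to\infty$, gives $\tr T(a)/(\ell-2)$. Finally, $\tr T=2\tr\Hess^Y v-\ell\Delta^Y v$ because the tangent metric has trace $\ell$.

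\emph{Main obstacle.} The hard step is the strong convergence of $\nabla W$ and $\nabla G_a$ on annuli under pmGH rescaling, jointly with the frame. A weak limit of a product of two gradients does not suffice, so strong $L^2$ convergence is needed for at least one factor. I would first aim to get strong convergence of $\nabla\tilde G$ from~\cite{BDS22}, and of $\nabla W$ from convergence of the Dirichlet energies $\int\tilde W\,\tilde H'$, using stability of the forcing under locally uniform convergence.
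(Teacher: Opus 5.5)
Your localisation estimates, part (ii), the blow-up of $\epsilon G_a$ and $W/(\epsilon r^2)$, and the Euclidean computation $\int\nabla g_0\cdot\nabla z_0=qA_{H,\ell}$ followed by rotational invariance all agree with the paper's proof. The gap is in the freezing step. The expression $\fint_{B_{Rr}}|T-T(a)|\to0$ has no meaning for a tensor field, and your remedy does not give it one. The harmonic almost splitting maps of Proposition~\ref{prop:regular-almost-splitting} are constructed afresh at each scale $r_i$ and around the particular pole $a$. So there is no fixed function $T(\nabla h^\alpha,\nabla h^\beta)$ to which Lebesgue differentiation applies, and the values $T_{\alpha\beta}(a)$ are undefined.

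The $L^2$ Gram error only controls $T-\sum_{\alpha\beta}T(\nabla h_i^\alpha,\nabla h_i^\beta)\,dh_i^\alpha\otimes dh_i^\beta$. It says nothing about the oscillation of the coefficient functions $T(\nabla h_i^\alpha,\nabla h_i^\beta)$ on $B_{Rr_i}(a)$. Without that control, the blow-up yields at best $\int T_\infty(\xi)(\nabla g_0,\nabla z_0)\,d\xi$ for a possibly nonconstant limit tensor $T_\infty$. Writing $\nabla g_0\otimes\nabla z_0=\phi(|\xi|)\,\hat\xi\otimes\hat\xi$, rotational invariance reduces only the isotropic part to $\tr T(a)$. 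The traceless part $\int\phi(|\xi|)\,T_\infty(\xi)(\hat\xi,\hat\xi)\,d\xi-\ell^{-1}\int\phi(|\xi|)\tr T_\infty(\xi)\,d\xi$ need not vanish. The trace is a scalar with a Lebesgue point, but the anisotropic part of $T$ is exactly what requires a frame.

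The missing idea is a frame that is independent of both the scale and the pole. The paper fixes a countable family of compactly supported Lipschitz $h_\alpha\in D(\Delta^Y)$ whose differentials generate the cotangent module on $Q$. It then works at a common Lebesgue point of the following quantities:
\begin{itemize}
\item the Gram entries;
\item $|\Delta^Y h_\alpha|^2$;
\item the pairings $\langle T,dh_\alpha\otimes dh_m\rangle$;
\item the tensor Gram entries;
\item $|T|^2$.
\end{itemize}
Choose $\ell$ members spanning the fibre at $a$ and constants $t_{\alpha m}$ matching $T$ there. Expanding the square then gives $\fint_{B_s(a)}|T-\sum t_{\alpha m}\,dh_\alpha\otimes dh_m|^2\to0$, which is the correct meaning of freezing. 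The rescalings $(h_\alpha-h_\alpha(a))/r_i$ have vanishing Laplacians and a constant limiting Gram matrix, so they converge to affine functions by Liouville. The constant-coefficient frozen tensor therefore passes to the tangent cone, where your rotational-invariance computation applies. Countability matters because it guarantees a single full-measure set of poles that works for all frame functions simultaneously; a frame chosen depending on $a$ gives no such guarantee.
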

\begin{proof}
	We use the local Sobolev stability results of~\cite{AH18}. Work at a pole $a$ with the density and tangent cone properties of Lemma~\ref{lem:shell-potential}, and write $\theta=\theta_\ell^Y(a)$. The Lebesgue-point conditions imposed below define one full-measure set.

	For part~(i), at a Lebesgue point of $|2\Hess^Y v-(\Delta^Y v)g_Y|$, Lemma~\ref{lem:shell-potential}, $|\nabla^Y G_a|\leq C_as^{1-\ell}$, and the lower bound $V_\epsilon(a)\geq\frac12 A_{H,\ell}\theta r^\ell$ give, on dyadic shells of radius $s$,
	\begin{equation}\label{eq:inner-stress-shell}
		\begin{aligned}
			&\frac1{V_\epsilon(a)}\int_{\text{shell}}|2\Hess^Y v-(\Delta^Y v)g_Y|\,|\nabla^Y G_a|\,|\nabla^Y W|\,dm_Y\\
			&\quad\leq C_a\frac sr\fint_{B_{2s}(a)}|2\Hess^Y v-(\Delta^Y v)g_Y|\,dm_Y\qquad(2s<C_1r),
		\end{aligned}
	\end{equation}
	and
	\begin{equation}\label{eq:outer-stress-shell}
		\begin{aligned}
			&\frac1{V_\epsilon(a)}\int_{\text{shell}}|2\Hess^Y v-(\Delta^Y v)g_Y|\,|\nabla^Y G_a|\,|\nabla^Y W|\,dm_Y\\
			&\quad\leq C_a\left(\frac rs\right)^{\ell-2}\fint_{B_{2s}(a)}|2\Hess^Y v-(\Delta^Y v)g_Y|\,dm_Y\qquad(C_1r<s<\rho/2).
		\end{aligned}
	\end{equation}
	The centered averages are bounded. Geometric summation bounds the inner region $B_{\delta r}(a)$ by $C_a\delta$ and the region $B_{\rho/2}(a)\setminus B_{Rr}(a)$ by $C_aR^{2-\ell}$, for small $\delta$ and large $R$. The remaining separated region contributes $O_a(\epsilon)\|2\Hess^Y v-(\Delta^Y v)g_Y\|_1$.

	By the density and generation results in~\cite{Gig18}, followed by localization, choose a countable family of compactly supported Lipschitz functions $h_\alpha\in D(\Delta^Y)$ whose differentials generate the cotangent module on $Q$. Take a common Lebesgue point of their Gram entries, $|\Delta^Y h_\alpha|^2$, all pairings between their tensor products and the stress tensor, the tensor-product Gram entries, and $|2\Hess^Y v-(\Delta^Y v)g_Y|^2$. Choose $\ell$ members spanning the fiber at $a$, and coefficients $t_{\alpha m}$ so that their tensor combination agrees there with $2\Hess^Y v-(\Delta^Y v)g_Y$. Expanding the squared error gives
	\begin{equation}\label{eq:stress-freezing}
		\fint_{B_s(a)}\left|2\Hess^Y v-(\Delta^Y v)g_Y-\sum_{\alpha,m=1}^\ell t_{\alpha m}\,d^Y h_\alpha\otimes d^Y h_m\right|^2\,dm_Y\longrightarrow0.
	\end{equation}
	On $B_{Rr}(a)\setminus B_{\delta r}(a)$, Cauchy--Schwarz and the gradient bounds make this coefficient error tend to zero. The same tail bounds apply to the finite frozen tensor. Thus it suffices to pass that finite sum to the Euclidean tangent cone on fixed annuli.

	Set
	\[ r_i=(\epsilon_i/\theta)^{1/(\ell-2)},\qquad d_i=r_i^{-1}d_Y,\qquad m_i=\frac{m_Y}{\theta r_i^\ell}, \]
	and
	\[ g_i=\epsilon_iG_a,\qquad z_i=\frac{W_{\epsilon_i,a}}{\epsilon_i r_i^2},\qquad h_{\alpha,i}=\frac{h_\alpha-h_\alpha(a)}{r_i}. \]
	The rescaled spaces converge to $(\R^\ell,d_{\mathrm E},\Leb^\ell,0)$. Theorem~\ref{thm:green-asymptotic}, based on~\cite[Proposition~2.3]{BDS22}, gives $g_i\to g_0=c_\ell|z|^{2-\ell}$ locally uniformly off the origin. The rescaled equations are
	\[ \Delta^i g_i=cr_i^2g_i\quad\text{off the pole},\qquad \Delta^i h_{\alpha,i}=r_i\Delta^Y h_\alpha, \]
	and
	\begin{equation}\label{eq:rescaled-W-equation}
		(-\Delta^i+cr_i^2)z_i=H'(g_i).
	\end{equation}
	Lemma~\ref{lem:shell-potential} gives uniform local Lipschitz bounds for $z_i$ and the decay bounds $|z_i|\leq C|z|^{2-\ell}$, $|\nabla^i z_i|\leq C|z|^{1-\ell}$ outside a fixed rescaled ball. The functions $h_{\alpha,i}$ are uniformly Lipschitz and vanish at the pole; their Laplacians tend to zero locally in $L^2$ by the chosen Lebesgue-point property. Also $H'(g_i)\to H'(g_0)$ strongly in local $L^2$, with support in one fixed annulus.

	Local compactness and~\cite[Theorem~4.4]{AH18} now give, on a subsequence, strong local $W^{1,2}$ convergence for $z_i$ and $h_{\alpha,i}$ on balls, and for $g_i$ on annuli. The limiting equation is $-\Delta^{\mathrm E}z_0=H'(g_0)$ on all of $\R^\ell$. The decay bounds identify $z_0$ as the unique decaying Newton potential, hence radial. Each limit $h_{\alpha,0}$ is globally Lipschitz and harmonic, hence affine by the Euclidean Liouville theorem. The Gram limits identify its constant gradients through
	\[ \nabla^{\mathrm E}h_{\alpha,0}\cdot\nabla^{\mathrm E}h_{m,0}=\langle\nabla^Y h_\alpha,\nabla^Y h_m\rangle(a). \]
	They therefore represent the same frozen tensor on the Euclidean tangent cone.

	By polarization using~\cite[Corollary~4.3 and Theorem~4.6]{AH18}, the pairings of $\nabla^i h_{\alpha,i}$ with $\nabla^i g_i$ and $\nabla^i z_i$ converge strongly in $L^1$ on fixed annuli. Their uniform bounds upgrade this to strong $L^2$, so products converge in $L^1$. The tensor and gradient scalings are
	\[ r_i^{-2}\sum_{\alpha,m}t_{\alpha m}\,d^Y h_\alpha\otimes d^Y h_m=\sum_{\alpha,m}t_{\alpha m}\,d^i h_{\alpha,i}\otimes d^i h_{m,i}, \]
	and, in orthonormal components,
	\[ \nabla^Y G_a=(\epsilon_i r_i)^{-1}\nabla^i g_i,\qquad \nabla^Y W_{\epsilon_i,a}=\epsilon_i r_i\nabla^i z_i. \]
	The gradient factors cancel, while $dm_Y=\theta r_i^\ell\,dm_i$ and $V_{\epsilon_i}(a)/(\theta r_i^\ell)\to A$. Passing to the limit on fixed annuli, using~\eqref{eq:stress-freezing}, and then sending $\delta\to 0$ and $R\to \infty$ by~\eqref{eq:inner-stress-shell}--\eqref{eq:outer-stress-shell}, gives
	\begin{equation}\label{eq:stress-tangent-limit}
		\begin{aligned}
			&\lim_i\frac1{V_{\epsilon_i}(a)}\int_Y\bigl(2\Hess^Y v-(\Delta^Y v)g_Y\bigr)(\nabla^Y G_a,\nabla^Y W_{\epsilon_i,a})\,dm_Y\\
			&\quad=\frac1A\int_{\R^\ell}\bigl(2\Hess^Y v-(\Delta^Y v)g_Y\bigr)(a)(\nabla^{\mathrm E}g_0,\nabla^{\mathrm E}z_0)\,d\Leb^\ell.
		\end{aligned}
	\end{equation}

	Homogeneity and the definition of $A$ give
	\[ \int_{\R^\ell}H(\lambda g_0)\,d\Leb^\ell=\lambda^{\ell/(\ell-2)}A\qquad(\lambda>0). \]
	Differentiating at $\lambda=1$ is legitimate because $H'$ is supported away from the pole. Green's identity and rotational invariance therefore give
	\[ \int_{\R^\ell}\nabla^{\mathrm E}g_0\cdot\nabla^{\mathrm E}z_0\,d\Leb^\ell=\int_{\R^\ell}g_0H'(g_0)\,d\Leb^\ell=\frac{\ell A}{\ell-2}, \]
	and
	\begin{equation}\label{eq:euclidean-stress-coefficient}
		\int_{\R^\ell}\nabla^{\mathrm E}g_0\otimes\nabla^{\mathrm E}z_0\,d\Leb^\ell=\frac{A}{\ell-2}\Id_\ell.
	\end{equation}
	Here the annular Green identity has zero inner boundary term because $z_0$ is bounded and radial harmonic near the origin, and zero limiting outer term by its decay. Contracting~\eqref{eq:euclidean-stress-coefficient} with the frozen tensor in~\eqref{eq:stress-tangent-limit} proves (i). Every subsequence admits the preceding extraction, and the resulting scalar is independent of the affine frame; hence the limit holds along the full prescribed sequence.

	For part~(ii), Lemma~\ref{lem:shell-potential} and boundedness of $\Delta^Y v$ bound the normalized contribution of an inner dyadic shell by $C_as^2$, and that of an outer shell by $C_ar^2(r/s)^{\ell-4}$. Since $\ell=k+3\geq4$, summation and the separated-region bound give
	\[ \frac1{V_\epsilon(a)}\int_Y|\Delta^Y v|G_aW_{\epsilon,a}\,dm_Y\leq C_a\bigl[r^2(1+|\log r|)+\epsilon\bigr]\longrightarrow0. \]
	This proves~\eqref{eq:killing-vanishes}.
\end{proof}

\subsection{Completion of Part II}\label{sec:proof-density-equality}
\begin{proof}[Proof of Theorem~\ref{thm:log-density-identity}(ii)]
	Fix relatively compact open sets $A\Subset B\subset X$ and $f\in\Test(X)$ with $\supp f\Subset B$. Let $b_f\in L^2(A,m)$ be the representative constructed at the beginning of this section, and use the product construction above with poles over $A$. Fix a decreasing sequence $\epsilon_i\to0^+$ for which the finite-scale formulas hold on a common full-measure pole set. The soft-ball term satisfies
	\[ \frac1{V_{\epsilon_i}(a)} \int_{Y}H(\epsilon_i G_a)\Delta^Y v\,dm_Y \longrightarrow \Delta^Y v(a), \]
	because its support shrinks to $a$, its mass is comparable to $V_{\epsilon_i}(a)$, and $a$ is a Lebesgue point of $\Delta^Y v$. Proposition~\ref{prop:normalized-stress}, together with this limit, gives the limits of all three terms on the right-hand side of~\eqref{eq:general-finite-stress}. Consequently,
	\[ \langle\nabla \Phi_{\epsilon_i},\nabla f\rangle(x) \longrightarrow \frac{\ell-2}{2} \left[ \frac{2\tr(\Hess^Y v)(a)-\ell\Delta^Y v(a)}{\ell-2} +\Delta^Y v(a) \right] = \tr(\Hess^Y v)(a)-\Delta^Y v(a). \]
	On the pole region, where $\chi=1$,
	\[ \tr(\Hess^Y v)(a)-\Delta^Y v(a) = \tr(\Hess f)(x)-\Delta f(x). \]
	The directional derivative is independent of the Euclidean coordinate. Fubini therefore gives this convergence for $m$-almost every $x\in A$. The uniform local $L^2$ bound~\eqref{eq:local-regularized-directional} upgrades it to weak $L^2(A)$ convergence. By the construction and uniqueness of $b_f$ at the beginning of Section~\ref{sec:stress-limit}, the weak limit is $b_f$. Thus
	\[ b_f=\tr(\Hess f)-\Delta f \qquad m\text{-almost everywhere on }A. \]

	For every $\phi\in\Lip_c(A)$, the distributional representation of $b_f$ established above now gives
	\begin{align*}
		-\int_X\Phi\bigl[\phi\Delta f+\langle\nabla\phi,\nabla f\rangle\bigr]\,dm &=\int_A\phi b_f\,dm \\
		&=\int_X\phi\bigl[\tr(\Hess f)-\Delta f\bigr]\,dm.
	\end{align*}
	This proves~\eqref{eq:log-density-identity} for compactly supported global test functions.

	Let $f$ be any global test function and $\phi$ any compactly supported Lipschitz function. Choose a compact test cutoff equal to one near $\supp\phi$ and apply the preceding identity to its product with $f$. Locality identifies the gradient, Hessian, and Laplacian of this product with those of $f$ near $\supp\phi$. Hence~\eqref{eq:log-density-identity} holds in the full stated class.
\end{proof}

\subsection*{Acknowledgments}
The author thanks Professor Chong Song of Xiamen University for helpful comments on the volume-corrected heat entropy.   The author used ChatGPT (OpenAI) as a research aid to explore proof strategies, check calculations, and refine the language of the manuscript. The author retains full responsibility for the mathematical content of this paper.

\end{document}